\let\ams@starttoc\@starttoc
\let\@starttoc\ams@starttoc
\patchcmd{\@starttoc}{\makeatletter}{\makeatletter\parskip\z@}{}{}
\newcommand\CA{{\mathscr A}}
\newcommand\CIF{{\mathcal {IF}}} 
\newcommand\CIFM{{\mathcal {IFM}}}
\newcommand\BBC{{\mathbb C}}
\newcommand\BBK{{\mathbb K}}
\newcommand\BBZ{{\mathbb Z}}
\newcommand\Der{{\operatorname{Der}}}
\newcommand\GL{\operatorname{GL}}
\newcommand\pdeg{\operatorname{pdeg}}
\newcommand\rank{\operatorname{rank}}
\renewcommand\th{{^{\text{th}}}}
\newcommand{\one}{\mathbbm{1}}
\numberwithin{equation}{section}
\theoremstyle{plain}
\newtheorem{lemma}[equation]{Lemma}
\newtheorem{theorem}[equation]{Theorem}
\newtheorem{corollary}[equation]{Corollary}
\newtheorem{proposition}[equation]{Proposition}
\theoremstyle{definition}
\newtheorem{defn}[equation]{Definition}
\newtheorem{remark}[equation]{Remark}
\subjclass[2010]{52C35 (14N20, 32S22, 51D20)}  
\begin{document}

%%%%%%%%%%%%%%%%%%%%%%%%%%%%%%%%%%%%%%%%%%%%%%%%%%%%%%%%%%%%%%%%%%%%%%
%%%%%%%%%%%%% top matter stuff
%%%%%%%%%%%%%%%%%%%%%%%%%%%%%%%%%%%%%%%%%%%%%%%%%%%%%%%%%%%%%%%%%%%%%%
\title[Inductive Freeness of Ziegler's Canonial Multiderivations]
{Inductive Freeness of Ziegler's Canonical Multiderivations for Restrictions of Reflection Arrangements}

\author[T.~Hoge, G.~R\"ohrle, and S.~Wiesner]{Torsten Hoge, Gerhard R\"ohrle, and Sven Wiesner}
\address
{Fakult\"at f\"ur Mathematik,
	Ruhr-Universit\"at Bochum,
	D-44780 Bochum, Germany}
\email{torsten.hoge@rub.de}
\email{gerhard.roehrle@rub.de}
\email{sven.wiesner@rub.de}

\keywords{
Free arrangement, free
multiarrangement,
Ziegler multiplicity,
inductively free arrangement,
reflection arrangement, restrictions of reflection arrangements}

\allowdisplaybreaks

\begin{abstract}
Let $\CA$ be a free hyperplane arrangement.  
In 1989, Ziegler showed that 
the restriction $\CA''$ of $\CA$
to any hyperplane 
endowed with the natural multiplicity $\kappa$ 
is then a free multiarrangement.
Recently, in \cite{hogeroehrle:ZieglerII},  
an analogue of Ziegler's theorem for the stronger notion of inductive freeness was proved:  
if $\CA$ is inductively free, then so is 
the free multiarrangement $(\CA'',\kappa)$.

In \cite{hogeroehrle:Ziegler}, all reflection arrangements which admit inductively free Ziegler restrictions are classified. The aim of this paper is to extend this classification to all arrangements which are induced by reflection arrangements
utilizing the aforementioned fundamental result from \cite{hogeroehrle:ZieglerII}.
\end{abstract}

\maketitle

%\setcounter{tocdepth}{2}
%\tableofcontents

%%%%%%%%%%%%%%%%%%%%%%%%%%%%%%%%%%%%%%%%%%%%%%%%%%%%%%%%%%%%%%%%%%%%%%
%%%%%%%%%%%%% article body...
%%%%%%%%%%%%%%%%%%%%%%%%%%%%%%%%%%%%%%%%%%%%%%%%%%%%%%%%%%%%%%%%%%%%%%

%%%%%%%%%%%%%%%%%%%%%%%%%%%%%%%%%%%%%%%%%%%%%%%%%%%%%%%%%%%%%%%%%%%%%%
%%%%%%%%%%%%% \S1 Introduction
%%%%%%%%%%%%%%%%%%%%%%%%%%%%%%%%%%%%%%%%%%%%%%%%%%%%%%%%%%%%%%%%%%%%%%
\section{Introduction}

\subsection{}
The class of free arrangements, respectively free multiarrangements, 
plays a fundamental role in the theory of 
hyperplane arrangements, respectively 
multiarrangements. 
In \cite{ziegler:multiarrangements}, Ziegler 
introduced the notion of multiarrangements and initiated the study of their 
freeness.  
We begin by recalling Ziegler's 
fundamental construction from \cite{ziegler:multiarrangements}. 

Let $\BBK$ be a field and let $V$ 
be a finite-dimensional $\BBK$-vector space.
A \emph{hyperplane arrangement} or simply an \emph{arrangement} is a pair
$(\CA, V)$, where $\CA$ is a finite collection of (central) hyperplanes in $V$.
Usually, we just write $\CA$ in place of $(\CA, V)$; see Section \ref{ssect:hyper}.

\begin{defn}
\label{def:kappa}
Let $\CA$ be an arrangement.
Fix $H_0 \in \CA$ and  consider the restriction 
$\CA''$ with respect to $H_0$.
Define the \emph{canonical multiplicity} 
$\kappa$ on $\CA''$ as follows. For $Y \in \CA''$ set 
\[
\kappa(Y) := |\CA_Y| -1,
\]
i.e., $\kappa(Y)$ is the number of hyperplanes in $\CA \setminus\{H_0\}$
lying above $Y$.
Ziegler showed that freeness of $\CA$ implies 
freeness of the multiarrangement $(\CA'', \kappa)$ as follows.
\end{defn}

\begin{theorem}
[{\cite[Thm.~11]{ziegler:multiarrangements}}]
\label{thm:zieglermulti}
Let $\CA$ be a free arrangement with exponents
$\exp \CA = \{1, e_2, \ldots, e_\ell\}$.
Let $H_0 \in \CA$ and consider the restriction 
$\CA''$ with respect to $H_0$.
Then the multiarrangement $(\CA'', \kappa)$ is free with
exponents
$\exp (\CA'', \kappa) = \{e_2, \ldots, e_\ell\}$. 
\end{theorem}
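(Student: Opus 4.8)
The plan is to exhibit a homogeneous basis of $D(\CA'',\kappa)$ manufactured from a basis of $D(\CA)$, and then to certify it by means of Ziegler's multiarrangement version of Saito's criterion. Choose linear coordinates $x_1,\dots,x_\ell$ on the ambient space with $\alpha_{H_0}=x_1$, write $S$ for the corresponding polynomial ring, $\bar S:=S/x_1S$ for the coordinate ring of $H_0$, and let $\theta_E=\sum_i x_i\partial_{x_i}$ be the Euler derivation. Since $\CA$ is free, fix a homogeneous basis $\theta_1=\theta_E,\theta_2,\dots,\theta_\ell$ of $D(\CA)$ with $\pdeg\theta_i=e_i$, where $e_1=1$. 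Because $H_0\in\CA$, every $\theta_i(x_1)$ is divisible by $x_1$; hence for $i\ge2$ we may replace $\theta_i$ by $\theta_i-(\theta_i(x_1)/x_1)\,\theta_E$, a unipotent change of basis that preserves homogeneity and the degrees, after which $\theta_i(x_1)=0$ for all $i\ge2$. Each such $\theta_i$ then involves only $\partial_{x_2},\dots,\partial_{x_\ell}$, and reducing its coefficients modulo $x_1$ yields a homogeneous $\delta_i\in\Der(\bar S)$ of degree $e_i$. The assertion to be proved is that $\delta_2,\dots,\delta_\ell$ is a basis of $D(\CA'',\kappa)$.

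First I would verify that each $\delta_i$ lies in $D(\CA'',\kappa)$; this is the heart of the matter. Fix $Y\in\CA''$ and let $\CA_Y=\{H_0,H_1,\dots,H_k\}$ with $k=\kappa(Y)$. The linear forms vanishing on the rank-two flat $Y$ span a plane containing $x_1$, so after a linear change in $x_2,\dots,x_\ell$ alone we may assume $\alpha_{H_j}=a_jx_1+b_jx_2$ for $j=1,\dots,k$, with all $b_j\ne 0$ and these $k$ forms pairwise non-proportional; then $\bar\alpha_Y=x_2$. Writing $\theta_i=\sum_{m\ge2}h_m\,\partial_{x_m}$, the conditions $\theta_i\in D(\CA)$ and $\theta_i(x_1)=0$ give $\theta_i(\alpha_{H_j})=b_jh_2\in\alpha_{H_j}S$ for each $j$, so $h_2$ lies in the intersection of the $k$ distinct principal primes $(\alpha_{H_j})$, i.e.\ $h_2=\bigl(\prod_{j=1}^k\alpha_{H_j}\bigr)g$ for some $g\in S$. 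Reducing modulo $x_1$ gives $\delta_i(\bar\alpha_Y)=\overline{h_2}\in x_2^{\,k}\bar S=\bar\alpha_Y^{\,\kappa(Y)}\bar S$. Since $Y$ was arbitrary, $\delta_i\in D(\CA'',\kappa)$.

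Next I would compute the Saito matrix of $\delta_2,\dots,\delta_\ell$. Saito's criterion applied to the basis $\theta_E,\theta_2,\dots,\theta_\ell$ of $D(\CA)$ gives $\det\bigl(\theta_j(x_i)\bigr)_{1\le i,j\le\ell}\doteq Q(\CA)=x_1\prod_{H\ne H_0}\alpha_H$, where $\doteq$ denotes equality up to a nonzero scalar. The first column of this matrix is $(x_1,\dots,x_\ell)$ and, since $\theta_i(x_1)=0$ for $i\ge2$, its first row is $(x_1,0,\dots,0)$; expanding along the first row and cancelling $x_1$ gives $\det\bigl(\theta_j(x_i)\bigr)_{2\le i,j\le\ell}\doteq\prod_{H\ne H_0}\alpha_H$. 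Reducing modulo $x_1$ and grouping the factors $\alpha_H$, $H\ne H_0$, according to the flat $Y=H\cap H_0$ they cut out in $\CA''$ — precisely $\kappa(Y)$ of them for each $Y$, each reducing to a nonzero multiple of $\bar\alpha_Y$ — we obtain $\det\bigl(\delta_j(x_i)\bigr)_{2\le i,j\le\ell}\doteq\prod_{Y\in\CA''}\bar\alpha_Y^{\,\kappa(Y)}=Q(\CA'',\kappa)\ne0$. By Ziegler's multiarrangement analogue of Saito's criterion, $\delta_2,\dots,\delta_\ell$ is therefore a basis of $D(\CA'',\kappa)$, so $(\CA'',\kappa)$ is free with exponents $\{\pdeg\delta_2,\dots,\pdeg\delta_\ell\}=\{e_2,\dots,e_\ell\}$; in particular the degree identity $\sum_{i\ge2}e_i=\sum_{Y\in\CA''}\kappa(Y)=|\CA|-1$ falls out.

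The step I expect to be the main obstacle is the first one — showing that the reduced derivations $\delta_i$ actually satisfy the multiplicity-$\kappa$ logarithmic condition. It forces a local analysis at each rank-two flat $Y\in\CA''$: one must identify the principal ideals cut out by the hyperplanes lying above $Y$, take their intersection, and then keep careful track of which linear factors survive reduction modulo $x_1$ and with what multiplicity. Once this is settled, the determinant computation and the invocation of Saito's criterion are essentially bookkeeping.
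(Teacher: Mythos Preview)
Your proof is correct and is, in fact, essentially Ziegler's original argument from \cite{ziegler:multiarrangements}. Note, however, that the present paper does not give its own proof of this statement: Theorem~\ref{thm:zieglermulti} is quoted as background with a citation and is not proved here, so there is no in-paper proof to compare against. Your approach---normalizing a homogeneous basis of $D(\CA)$ so that $\theta_i(x_1)=0$ for $i\ge2$, reducing modulo $x_1$, verifying membership in $D(\CA'',\kappa)$ by a local analysis at each rank-two flat, and certifying the result via the multiarrangement Saito criterion---is precisely the route taken in Ziegler's paper, and all steps are sound.
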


Because of the relevance of Ziegler's multiplicity
in the theory of free arrangements, 
it is natural to investigate stronger
freeness properties for $(\CA'', \kappa)$ and
specifically to ask for an analogue of
Theorem \ref{thm:zieglermulti} for 
inductive freeness. This was settled recently in \cite{hogeroehrle:ZieglerII}:

\begin{theorem}[{\cite[Thm.~1.3]{hogeroehrle:ZieglerII}}]
	\label{thm:main}
	If $\CA$ is inductively free, then so is the Ziegler restriction $(\CA'',\kappa)$, for every $H_0 \in \CA$.
\end{theorem}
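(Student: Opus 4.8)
The plan is to prove the statement by induction on the number of hyperplanes $|\CA|$, transporting an inductive chain for $\CA$ into one for the multiarrangement $(\CA'',\kappa)$ by means of the Addition--Deletion Theorem for multiarrangements of Abe, Terao and Wakefield. When $|\CA|\le 1$ the Ziegler restriction is the empty multiarrangement in $H_0$, which lies in $\CIFM$, and when $\rank\CA\le 2$ the multiarrangement $(\CA'',\kappa)$ has rank $\le 1$ and is trivially inductively free; so assume $|\CA|\ge 2$ and $\rank\CA\ge 3$. Since $\CA\in\CIF$ there is $H_1\in\CA$ with $\CA_1:=\CA\setminus\{H_1\}\in\CIF$, $\CA^{H_1}\in\CIF$, and the exponents of the triple $(\CA,\CA_1,\CA^{H_1})$ compatible in the sense of Terao's Addition--Deletion Theorem. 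I shall first assume that such an $H_1$ may be chosen with $H_1\ne H_0$.

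Put $Y_0:=H_0\cap H_1$, a hyperplane of $\CA''=\CA^{H_0}$. The crux of the step is a pair of identifications of multiarrangements. First, the Ziegler restriction $((\CA_1)'',\kappa_1)$ of $\CA_1$ at $H_0$ (with $\kappa_1$ its canonical multiplicity) is precisely the deletion of $(\CA'',\kappa)$ at $Y_0$: deleting $H_1$ lowers by one the number of hyperplanes of $\CA$ above $Y_0$ and leaves every other local count unchanged, so $((\CA_1)'',\kappa_1)=(\CA'',\kappa-\delta_{Y_0})$, with the convention that $Y_0$ disappears altogether when $\kappa(Y_0)=1$. Second -- and this is the technical heart -- the restriction $((\CA'')^{Y_0},\kappa^{*})$ of $(\CA'',\kappa)$ at $Y_0$ in the sense of Abe--Terao--Wakefield, $\kappa^{*}$ being the Euler multiplicity, has the remarkable feature that $\kappa^{*}$ is again a canonical multiplicity: $((\CA'')^{Y_0},\kappa^{*})$ coincides with the Ziegler restriction of $\CA^{H_1}$ at the hyperplane $\overline{H_0}:=H_0\cap H_1\in\CA^{H_1}$; morally, both multiarrangements record the local structure of $\CA$ around the rank-$2$ flat $H_0\cap H_1$. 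Establishing this identity -- computing Ziegler's Euler multiplicity and recognising it as canonical -- is, I expect, the principal obstacle in the argument.

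Granting the two identifications, the induction closes. Applying the inductive hypothesis to $\CA_1$ (legitimate since $|\CA_1|<|\CA|$ and $\CA_1\in\CIF$) gives $(\CA'',\kappa-\delta_{Y_0})\in\CIFM$; applying it to the smaller arrangement $\CA^{H_1}$ together with its hyperplane $\overline{H_0}$ gives $((\CA'')^{Y_0},\kappa^{*})\in\CIFM$. The exponents fit together: by Theorem~\ref{thm:zieglermulti}, $\exp(\CA'',\kappa)=\exp\CA\setminus\{1\}$ and $\exp(\CA'',\kappa-\delta_{Y_0})=\exp\CA_1\setminus\{1\}$, which by Terao's exponent relation for the triple $(\CA,\CA_1,\CA^{H_1})$ is the previous multiset with one entry lowered by $1$, while $\exp((\CA'')^{Y_0},\kappa^{*})=\exp\CA^{H_1}\setminus\{1\}$ is the same multiset with that entry deleted. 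Hence the triple $\bigl((\CA'',\kappa),\,(\CA'',\kappa-\delta_{Y_0}),\,((\CA'')^{Y_0},\kappa^{*})\bigr)$ meets the hypotheses of the Addition Theorem for multiarrangements; as its latter two members lie in $\CIFM$, the theorem exhibits $(\CA'',\kappa)$ as an addition of inductively free multiarrangements, free with $\exp(\CA'',\kappa)=\exp\CA\setminus\{1\}$ in accordance with Theorem~\ref{thm:zieglermulti}, so $(\CA'',\kappa)\in\CIFM$.

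Finally one must justify the assumption $H_1\ne H_0$. If some inductive chain of $\CA$ removes $H_0$ not at its top but only upon reaching a subarrangement $\CB$ with $H_0\in\CB\subsetneq\CA$, then performing the addition step above successively for the hyperplanes deleted before $H_0$ -- each different from $H_0$ -- reduces the assertion for $\CA$ to the assertion for $\CB$, which holds by the inductive hypothesis since $|\CB|<|\CA|$. There remains the case that every inductive chain of $\CA$ deletes $H_0$ first, so that $\CA\setminus\{H_0\}\in\CIF$ and $\CA^{H_0}\in\CIF$ with compatible exponents; this residual situation requires a separate argument that uses the inductive freeness of the simple arrangement $\CA^{H_0}$ directly, and I regard it, together with the identification of the Euler multiplicity in the main step, as the genuinely non-formal part of the proof.
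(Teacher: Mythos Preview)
This paper does not itself prove Theorem~\ref{thm:main}; it quotes the result from \cite{hogeroehrle:ZieglerII} and uses it as a black box. So there is no in-paper proof to compare against directly. That said, the paper does reproduce from \cite{hogeroehrle:ZieglerII} precisely the technical core you isolate: Lemma~\ref{lemma:free_sequence_to_ziegler_multiplicity}(2) is exactly your identification $((\CA'')^{Y_0},\kappa^*)=((\CA^{H_1})^{H_0\cap H_1},\kappa)$, i.e.\ that the Euler multiplicity arising in the restriction step is again a Ziegler multiplicity. Your overall inductive scheme---transport an inductive chain of $\CA$ to one for $(\CA'',\kappa)$ via the Abe--Terao--Wakefield Addition--Deletion Theorem, using Ziegler's Theorem~\ref{thm:zieglermulti} to match exponents---is the approach of \cite{hogeroehrle:ZieglerII}, and the argument you give for the generic step $H_1\ne H_0$ is correct.

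The genuine gap in your proposal is the residual case in which $H_0$ is the first hyperplane removed in every inductive chain of $\CA$. You flag it honestly but do not resolve it. In \cite{hogeroehrle:ZieglerII} this is handled by what appears in the present paper as Theorem~\ref{thm:main2}: from $\CA\setminus\{H_0\}$ free (indeed inductively free) and $\CA''$ inductively free one deduces that $(\CA'',\kappa)$ is inductively free. The proof of that statement is a separate induction building up from the simple arrangement $(\CA'',\one)$ to $(\CA'',\kappa)$, again using that the Euler multiplicities encountered are Ziegler multiplicities of suitable rank-$2$ localizations; it is not a corollary of the generic step, and without it your argument is incomplete. Once you supply that piece (or invoke Theorem~\ref{thm:main2}), your proof goes through and coincides with the one in \cite{hogeroehrle:ZieglerII}.
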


Even if $\CA$ itself is not inductively free, the following result from \cite{hogeroehrle:ZieglerII} still guarantees 
inductive freeness  of $(\CA'',\kappa)$, provided  the restriction $\CA''$ itself is inductively free.

\begin{theorem}[{\cite[Thm.~1.6]{hogeroehrle:ZieglerII}}]
	\label{thm:main2}
	Let $H_0 \in \CA$. Suppose that $\CA\setminus \{H_0\}$ is free.
	If $\CA''$ is inductively free, then so is $(\CA'',\kappa)$.
\end{theorem}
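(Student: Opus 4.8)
The plan is to argue by induction on $|\CA|$, or equivalently on the total multiplicity $\sum_{Y\in\CA''}\kappa(Y)=|\CA|-1$ of the Ziegler restriction. If $\CA''$ consists of at most one hyperplane -- in particular if $\rank\CA\le 2$ -- then $(\CA'',\kappa)$ is a multiarrangement on at most one hyperplane, hence free, hence inductively free, and we are done. Two remarks on the shape of the argument. First, unlike in Ziegler's Theorem \ref{thm:zieglermulti} we do \emph{not} assume $\CA$ free; in fact the hypotheses here force neither freeness nor inductive freeness of $\CA$ (e.g.\ the braid arrangement together with a generic hyperplane), so Theorem \ref{thm:main} is not directly applicable and even the plain freeness of $(\CA'',\kappa)$ has to be produced along the way -- which costs nothing, since a valid chain of multiarrangement additions establishing inductive freeness establishes freeness as well. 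Secondly, in broad strokes this parallels the proof of Theorem \ref{thm:main}, with the inductive chain of $\CA''$ -- here a hypothesis rather than a consequence -- doing the work that the chain of $\CA$ did there.

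The engine is the addition--deletion theorem for multiarrangements (Abe--Terao--Wakefield). Fix $Y_0\in\CA''$. Write $\kappa_0$ for the multiplicity on $\CA''$ obtained from $\kappa$ by lowering its value at $Y_0$ by one (when $\kappa(Y_0)=1$, this removes $Y_0$ from the underlying arrangement), and let $\kappa^{*}$ be the Euler multiplicity on the restriction $(\CA'')^{Y_0}$ induced by $(\CA'',\kappa)$. The theorem asserts, in particular, that if $(\CA'',\kappa_0)$ and $\bigl((\CA'')^{Y_0},\kappa^{*}\bigr)$ are both inductively free with exponents nesting in the appropriate way, then so is $(\CA'',\kappa)$. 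So it suffices to exhibit a single $Y_0$ for which these two smaller multiarrangements are inductively free with matching exponents. We take $Y_0$ to be a hyperplane witnessing the inductive freeness of $\CA''$: both $\CA''\setminus\{Y_0\}$ and $(\CA'')^{Y_0}$ are inductively free, their exponents nesting into those of $\CA''$. We also fix a hyperplane $H_1\in\CA\setminus\{H_0\}$ with $H_1\cap H_0=Y_0$.

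Each of the two pieces is then to be identified with a Ziegler restriction of a strictly smaller arrangement, to which the induction hypothesis applies. For the deletion piece, a direct count of localizations -- removing $H_1$ lowers $|\CA_Y|$ by one precisely for $Y=Y_0$ -- shows that $(\CA'',\kappa_0)$ is the Ziegler restriction to $H_0$ of $\CA\setminus\{H_1\}$, whose underlying restriction $(\CA\setminus\{H_1\})^{H_0}$ equals $\CA''$ if $\kappa(Y_0)\ge 2$ and $\CA''\setminus\{Y_0\}$ if $\kappa(Y_0)=1$, hence is inductively free in either case. The induction hypothesis then yields inductive freeness of $(\CA'',\kappa_0)$ \emph{provided} $(\CA\setminus\{H_1\})\setminus\{H_0\}=(\CA\setminus\{H_0\})\setminus\{H_1\}$ is free. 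For the restriction piece, transitivity of restriction gives $(\CA'')^{Y_0}=\CA^{Y_0}$; one must compute the Euler multiplicity $\kappa^{*}$ on it and recognize the resulting multiarrangement as a Ziegler restriction of a smaller arrangement in the lower-dimensional ambient space $Y_0$ -- for instance of a suitable restriction of $\CA$ to a hyperplane containing $Y_0$ -- whose underlying restriction is again $(\CA'')^{Y_0}$, inductively free by the choice of $Y_0$, so that the induction hypothesis applies once more. Ziegler's Theorem \ref{thm:zieglermulti} is the bookkeeping device that keeps the exponents consistent through both additions.

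The main obstacle, I expect, is exactly the freeness requirements on the smaller deletions -- that $(\CA\setminus\{H_0\})\setminus\{H_1\}$, and the analogous deletion occurring in the restriction step, are free -- together with the explicit computation of the Euler multiplicity $\kappa^{*}$. Since $\CA\setminus\{H_0\}$ is only assumed free, these deletions need not be free for an arbitrary choice of $H_1$; making the argument work requires using the inductive freeness of $\CA''$ to constrain the witnessing hyperplane $Y_0$, and hence $H_1$, so that the deletions stay free, while tracking all exponents through the two applications of the multiarrangement addition--deletion theorem. Reconciling these two demands -- freeness of the deleted arrangements on the one side, inductive freeness of the restrictions on the other -- is the technical heart of the proof.
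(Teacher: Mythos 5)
There is a genuine gap, and you yourself put your finger on it without closing it. Your strategy requires applying the induction hypothesis to $\CA\setminus\{H_1\}$, and for that you need $(\CA\setminus\{H_1\})\setminus\{H_0\}=\CA'\setminus\{H_1\}$ to be free. But the only freeness you are given is of $\CA'$ itself, and deleting a single hyperplane from a free arrangement does not in general preserve freeness. Your choice of $Y_0$ is forced to be a witness for inductive freeness of $\CA''$, and $H_1$ must lie in $\CA_{Y_0}\setminus\{H_0\}$; there is simply no reason why \emph{any} such $H_1$ should make $\CA'\setminus\{H_1\}$ free. You note that ``reconciling these two demands \dots is the technical heart of the proof,'' which is an accurate diagnosis of the difficulty but not a resolution of it. As it stands, the induction step does not go through.

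The restriction half of the argument has the same defect compounded. You want to recognize $\bigl((\CA'')^{Y_0},\kappa^*\bigr)$ as the Ziegler restriction of some smaller arrangement (presumably $\CA^{H_1}$ restricted to $Y_0=H_0\cap H_1$), so that the induction hypothesis applies, but (a) you never actually compute $\kappa^*$, (b) the identification in Lemma \ref{lemma:free_sequence_to_ziegler_multiplicity}(2) that you would invoke requires both $\CA$ and $\CA\setminus\{H_1\}$ to be free --- neither of which is part of your hypotheses --- and (c) applying the induction hypothesis to $\CA^{H_1}$ at $Y_0$ would require freeness of $\CA^{H_1}\setminus\{Y_0\}$, which is yet another unsupplied condition. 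You also assert that Ziegler's Theorem \ref{thm:zieglermulti} will be ``the bookkeeping device that keeps the exponents consistent,'' but that theorem needs the ambient arrangement free, and in both of your applications the ambient arrangement ($\CA\setminus\{H_1\}$ or a restriction of it) is not known to be free. In short, you have correctly outlined the shape an addition--deletion induction on $|\CA|$ would have to take and correctly identified every place where it might break, but the breaks are left open; the proposal is a plan together with a list of worries, not a proof.
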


We note that here the freeness assumption on $\CA\setminus \{H_0\}$ is needed, cf.~\cite[Ex.~4.2]{hogeroehrle:ZieglerII}.

Because of the compatibility of
the product constructions for  
inductive freeness for simple arrangements,
\cite[Prop.~2.10]{hogeroehrle:indfree},
as well as for multiarrangements,
\cite[Thm.~1.4]{hogeroehrleschauenburg:free}
(cf.~Theorem \ref{thm:products}),
the question of inductive freeness 
of $(\CA'', \kappa)$ readily reduces to 
the case when $\CA$ is irreducible.

\subsection{} 
In this subsection assume that   $\BBK = \BBC$,  the complex numbers, and let $W$ be an irreducible complex reflection group with $V$ the $\BBC$-vector space affording the reflection representation of $W$. Denote by $\CA(W)$ the 
\emph{reflection arrangement} of $W$ in $V$ which consists of all the complex reflecting hyperplanes of $W$.
In \cite{hogeroehrle:Ziegler},  all reflection arrangements $\CA(W)$ were classified
which admit Ziegler restrictions $(\CA(W)'', \kappa)$ that 
are inductively free:

\begin{theorem}
	[{\cite[Thm.~1.4]{hogeroehrle:Ziegler}}]
	\label{thm:kappa1}
	Let $\CA = \CA(W)$ be the reflection arrangement of 
	the irreducible complex reflection group $W$.
	Let $\CA''$ be the restriction of $\CA$ to 
	a hyperplane in $\CA$.
	Then $(\CA'', \kappa)$ is inductively free
	if and only if one of the following holds:
	\begin{itemize}
		\item[(i)] $\CA$ is inductively free; or 
		\item[(ii)] $\CA$ is non-inductively free of rank at most $4$. 
	\end{itemize}
\end{theorem}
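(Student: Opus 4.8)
The plan is to establish the two implications separately; the forward implication further splits according to which of the alternatives (i), (ii) holds. Under alternative (i) there is nothing to do beyond invoking Theorem~\ref{thm:main}: if $\CA=\CA(W)$ is inductively free, then $(\CA'',\kappa)$ is inductively free for \emph{every} $H_0\in\CA$. (It is here that the present account is shorter than the original, which predates Theorem~\ref{thm:main}.)

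Under alternative (ii), $\CA=\CA(W)$ is free — automatically, since reflection arrangements are free — but not inductively free, and $\rank\CA\le 4$. Combined with the classification of inductively free reflection arrangements this leaves only finitely many irreducible $W$, all of rank $3$ or $4$ (among them $G_{24}$, $G_{27}$, $G_{29}$, $G_{31}$); for each such $W$ and each $W$-orbit of hyperplanes $H_0$ one confirms inductive freeness of $(\CA'',\kappa)$ by a finite computation. The economical route is Theorem~\ref{thm:main2}: since $\rank\CA''=\rank\CA-1\le 3$, one checks that the simple restriction $\CA''$ is inductively free — automatic if $\rank\CA''\le 2$, a short direct verification if $\rank\CA''=3$ — and that $\CA(W)\setminus\{H_0\}$ is free, by addition--deletion; Theorem~\ref{thm:main2} then yields the conclusion. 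In the few cases where one of these hypotheses fails, one instead produces by computer algebra an explicit addition--deletion chain of multiplicities terminating at $(\CA'',\kappa)$.

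For the reverse implication, suppose $(\CA'',\kappa)$ is inductively free while (i) fails, so $\CA(W)$ is free but not inductively free; the same classification confines $W$ to a finite list, and we must exclude every member of rank $\ge 5$. Equivalently, for $W\in\{G_{33},G_{34},W(E_7),W(E_8)\}$ — the rank $\ge 5$ reflection arrangements that are free but not inductively free — we must show $(\CA'',\kappa)$ is \emph{not} inductively free, for any $H_0$. The crucial rigidity comes from Theorem~\ref{thm:zieglermulti}: $\exp(\CA'',\kappa)=\exp\CA\setminus\{1\}$ independently of $H_0$, so this tuple is the unique target of any inductive chain for $(\CA'',\kappa)$, and in particular the first addition--deletion step of such a chain is already heavily constrained.

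The main obstacle is exactly this exclusion. To prove $(\CA'',\kappa)$ is not inductively free it suffices to show that no valid first step exists: for each hyperplane $H\in\CA''$, up to the relevant symmetry, one computes the Euler multiplicity $\kappa^*$ on $(\CA'')^H$ from the rank-$2$ localizations and verifies that $((\CA'')^H,\kappa^*)$ is either not free or fails to have exponents obtained from $\exp\CA\setminus\{1\}$ by deleting one entry. This is a finite, low-rank check for $G_{33}$ ($\rank\CA''=4$) and $G_{34}$ ($\rank\CA''=5$); for $W(E_7)$ ($\rank\CA''=6$) and $W(E_8)$ ($\rank\CA''=7$) one keeps the computation within reach by also using heredity of inductive freeness under localization — for every hyperplane $H_0$ of $\CA(W(E_8))$ one exhibits a flat $Y\subseteq H_0$ whose parabolic is of type $E_7$, so that the localization of $(\CA(W(E_8))'',\kappa)$ at $Y$ is precisely $(\CA(W(E_7))'',\kappa)$, reducing $W(E_8)$ to $W(E_7)$ (and $G_{34}$ to its parabolic $G_{33}$ in the same manner). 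I expect the technical heart to be organizing the root-system combinatorics so that this localization reduction applies uniformly across hyperplane orbits, and then carrying out the first-step non-existence check for the two remaining base cases $G_{33}$ and $W(E_7)$.
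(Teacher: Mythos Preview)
The statement is not proved in this paper; it is recalled from \cite{hogeroehrle:Ziegler}. So there is no proof here to compare against directly. That said, your proposal contains substantive errors in the classification data it relies on, and these errors make the argument break down.

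First, in both the forward direction (ii) and the reverse direction you have overlooked the infinite family $G(r,r,\ell)$ with $r,\ell\ge 3$. By Theorem~\ref{thm:indfree1}(i), the irreducible reflection arrangements that fail to be inductively free are precisely those of $G(r,r,\ell)$ for $r,\ell\ge 3$ together with $G_{24},G_{27},G_{29},G_{31},G_{33},G_{34}$. Hence among the non-inductively free $\CA(W)$ of rank $\le 4$ there are \emph{infinitely many}, namely $G(r,r,3)$ and $G(r,r,4)$ for all $r\ge 3$, so your ``finite computation'' plan in alternative~(ii) cannot work as stated. One needs a uniform argument for the family $G(r,r,4)$ (rank~$3$ restrictions being automatic by Remark~\ref{rem:rank2indfree}); compare the treatment of $\CA_4^1(r)$ in Lemma~\ref{lem:akl4} of the present paper for the flavor of such an argument.

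Second, and more seriously, in the reverse direction your list $\{G_{33},G_{34},W(E_7),W(E_8)\}$ is wrong on both counts: the Coxeter arrangements of types $E_7$ and $E_8$ \emph{are} inductively free (they are not excluded by Theorem~\ref{thm:indfree1}(i)), so they do not belong here at all; and you have again omitted the infinite family $G(r,r,\ell)$ for $\ell\ge 5$, $r\ge 3$, which must be handled. For this family one cannot rely on case-by-case computer checks; the natural route is to exhibit, for a suitable choice of $H_0$, a localization $((\CA'')_X,\kappa_X)$ that is simple and isomorphic to a non-inductively free $\CA(G(r,r,m))$ or $\CA_m^k(r)$, and then invoke Theorem~\ref{thm:localmulti}. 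This is exactly the mechanism used in Lemma~\ref{lem:akl2} of the present paper for restrictions, and the argument in \cite{hogeroehrle:Ziegler} for the reflection arrangements themselves follows the same pattern. Your proposed ``no valid first deletion step'' strategy is neither necessary nor sufficient here: even if a first step existed, that would not establish inductive freeness, and conversely the localization obstruction is both cleaner and applies uniformly to the infinite family.
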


We record a 
consequence of Theorem \ref{thm:kappa1}
along with  the classification of all 
inductively free restrictions of 
reflection arrangements from 
\cite[Thm.~1.2]{amendhogeroehrle:indfree},
(cf.~Theorem \ref{thm:indfree2}).

\begin{corollary}
	[{\cite[Cor.~1.5]{hogeroehrle:Ziegler}}]
	\label{cor:kappa1}
	Let $\CA = \CA(W)$ be the reflection arrangement of 
	the complex reflection group $W$. 	Let $\CA''$ be the restriction of $\CA$ to 
	a hyperplane in $\CA$.
	Then $(\CA'', \kappa)$ is inductively free
	if and only if $\CA''$ itself is inductively free.
\end{corollary}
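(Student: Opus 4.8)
The plan is to obtain the statement by comparing two classification results: Theorem~\ref{thm:kappa1}, which says exactly when $(\CA'',\kappa)$ is inductively free, and Theorem~\ref{thm:indfree2} (\cite[Thm.~1.2]{amendhogeroehrle:indfree}), which says exactly when a hyperplane restriction $\CA(W)''$ of a reflection arrangement is itself inductively free; the corollary then follows once one checks that the two resulting lists of groups (and hyperplanes) coincide. As a preliminary reduction I would pass to the case that $W$ is irreducible: if $W=W_1\times W_2$ and $H_0$ is a reflection hyperplane belonging to $W_1$, then $\CA(W)''=\CA(W_1)''\times\CA(W_2)$ and, unravelling Definition~\ref{def:kappa}, $(\CA(W)'',\kappa)=(\CA(W_1)'',\kappa)\times\CA(W_2)$ with $\CA(W_2)$ carrying the trivial multiplicity; so by the compatibility of inductive freeness with products, for simple arrangements \cite[Prop.~2.10]{hogeroehrle:indfree} and for multiarrangements \cite[Thm.~1.4]{hogeroehrleschauenburg:free} (cf.~Theorem~\ref{thm:products}), the desired equivalence for $W$ reduces to the one for $W_1$, the common factor $\CA(W_2)$ being inert.

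So assume $W$ is irreducible. The irreducible reflection arrangements that are free but not inductively free are $\CA(G(r,r,\ell))$ with $r\ge3$, $\ell\ge3$, together with $\CA(G_{24})$, $\CA(G_{27})$, $\CA(G_{29})$, $\CA(G_{31})$, $\CA(G_{33})$ and $\CA(G_{34})$; among these the ones of rank at least $5$ are $\CA(G(r,r,\ell))$ with $\ell\ge5$, $\CA(G_{33})$ and $\CA(G_{34})$. Hence Theorem~\ref{thm:kappa1} says: $(\CA(W)'',\kappa)$ is inductively free if and only if $\CA(W)$ is \emph{not} one of these last three families. The corollary is therefore equivalent to the assertion that Theorem~\ref{thm:indfree2} produces exactly the same dividing line, i.e.\ that for irreducible $W$: (a) every hyperplane restriction $\CA(W)''$ is inductively free whenever $W$ is \emph{not} one of $G(r,r,\ell)$ ($r\ge3$, $\ell\ge5$), $G_{33}$, $G_{34}$; and (b) no hyperplane restriction $\CA(W)''$ is inductively free when $W$ is one of these. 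Note that (a) and (b) in particular say that for irreducible $W$ inductive freeness of $\CA(W)''$ does not depend on the chosen $H_0$, which is precisely what is needed to match the ($H_0$-independent) criterion of Theorem~\ref{thm:kappa1}.

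The real content thus sits in Theorem~\ref{thm:indfree2}, and splits into a positive and a negative half. For (a) one must verify that all hyperplane restrictions are inductively free, not only for the inductively free $\CA(W)$ — where this can be organised around an inductive chain for $\CA(W)$ itself — but also for the non-inductively free ones of rank $\le4$, namely $\CA(G(r,r,3))$, $\CA(G(r,r,4))$, $\CA(G_{24})$, $\CA(G_{27})$, $\CA(G_{29})$ and $\CA(G_{31})$; for the exceptional groups this is a finite verification using the addition--deletion criterion (computer-assisted for the larger arrangements), and for the two members of the infinite family it follows from the explicit shape of the restricted arrangements. The main obstacle is (b): showing that \emph{no} restriction of $\CA(G(r,r,\ell))$ for $\ell\ge5$, and none of $\CA(G_{33})$ and $\CA(G_{34})$, is inductively free. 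For $G_{33}$ and $G_{34}$ this is again a finite, computer-assisted check that each restriction fails every recursive addition--deletion test. For the infinite family one uses that a hyperplane restriction of $\CA(G(r,r,\ell))$ is, up to isomorphism, the arrangement $\CA(G(r,r,\ell-1))$ enlarged by a single coordinate hyperplane, and one shows by a uniform argument that for $\ell-1\ge4$ this enlarged arrangement is not inductively free. Combining (a), (b) and the reduction to irreducible $W$ yields the corollary.
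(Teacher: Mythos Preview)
Your approach is exactly the one indicated in the paper: the corollary is obtained by matching the classification in Theorem~\ref{thm:kappa1} against the classification of inductively free hyperplane restrictions from \cite[Thm.~1.2]{amendhogeroehrle:indfree} (Theorem~\ref{thm:indfree2}), after the routine reduction to irreducible $W$. Your identification of the dividing line --- the non-inductively free irreducible $\CA(W)$ of rank $\ge 5$, namely $G(r,r,\ell)$ with $r\ge3$, $\ell\ge5$, $G_{33}$, $G_{34}$ --- and your check that Theorem~\ref{thm:indfree2} yields the same line are correct; the final two paragraphs, sketching how one would \emph{prove} Theorem~\ref{thm:indfree2}, are accurate but superfluous for the corollary, since that theorem is taken as a black box here.
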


The goal of this paper is to extend the classification of 
Theorem  \ref{thm:kappa1} and Corollary \ref{cor:kappa1} to restrictions $\CA(W)^X$ of complex reflection arrangements $\CA(W)$.
Clearly, the question reduces 
readily to the case when $W$ is irreducible.
Thanks to work by 
Orlik--Terao \cite{orlikterao:free}, 
\cite[Prop.\ 6.73, Prop.\ 6.77, Cor.\ 6.86, App.\ D]{orlikterao:arrangements}
and Hoge--R\"ohrle \cite{hogeroehrle:free},
every restriction 
$\CA(W)^X$ of a reflection arrangement $\CA(W)$ 
is free for $W$ an irreducible complex reflection group.
So in particular, the  
Ziegler multiplicity of a restriction of $\CA(W)^X$ to 
a hyperplane is free, by Theorem~\ref{thm:zieglermulti}.
Here is the principal result of our paper.

\begin{theorem}
	\label{thm:kappa2}
	Let $W$ be a finite, irreducible, complex 
	reflection group with reflection arrangement 
	$\CA(W)$. Fix $X \in L(\CA(W))\setminus\{V\}$ and  
	let $\CA = \CA(W)^X$ be the restriction of 
	$\CA(W)$ to $X$.
	Let $\CA''$ be the restriction of $\CA$ to 
	a hyperplane in $\CA$.
	Then $(\CA'', \kappa)$ is inductively free
	if and only if one of the following holds:
	\begin{itemize}
		\item[(i)] $\CA(W)$ is inductively free; or
		\item[(ii)] $W = G(r,r,\ell)$ and the restriction $\CA''$ of $\CA$ 
		itself is inductively free; or
		\item[(iii)] $W$ is of exceptional type $G_{33}$ or $G_{34}$ and $\rank \CA \le 4$.	
	\end{itemize}
\end{theorem}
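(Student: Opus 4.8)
The plan is to combine the transfer theorems~\ref{thm:main} and~\ref{thm:main2} (and Theorem~\ref{thm:kappa1}, which settles the case $X=V$) with the classifications of inductively free reflection arrangements and of inductively free restrictions $\CA(W)^X$ from \cite{hogeroehrle:indfree} and \cite[Thm.~1.2]{amendhogeroehrle:indfree} (Theorem~\ref{thm:indfree2}), and to reduce the remaining work to a short list of exceptional cases handled by explicit computation. As a preliminary, every $\CA(W)^X$ is free, so by Theorem~\ref{thm:zieglermulti} the multiarrangement $(\CA'',\kappa)$ is always free and only its inductive freeness is in question. If $\CA=\CA(W)^X$ splits as a product $\CA_1\times\CA_2$ with $H_0\in\CA_1$, then $\CA''=\CA_1^{H_0}\times\CA_2$ and the canonical multiplicity restricts to $\kappa_1$ on the first factor and to the constant multiplicity $\one$ on the second, so that $(\CA'',\kappa)=(\CA_1^{H_0},\kappa_1)\times\CA_2$; by product compatibility of inductive freeness for simple arrangements \cite[Prop.~2.10]{hogeroehrle:indfree} and for multiarrangements (Theorem~\ref{thm:products}), the question reduces to the irreducible factor of $\CA$ meeting $H_0$ together with inductive freeness of the remaining factors, the latter being dictated by \cite[Thm.~1.2]{amendhogeroehrle:indfree}. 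Finally, if $\rank\CA\le 3$ then $(\CA'',\kappa)$ has rank $\le 2$ and is inductively free for trivial reasons, so I may assume $\rank\CA\ge 4$ and hence $\rank\CA(W)\ge 5$.

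For the implication that (i), (ii) or (iii) imply inductive freeness of $(\CA'',\kappa)$: in case (i), Theorem~\ref{thm:indfree2} shows that inductive freeness of $\CA(W)$ forces inductive freeness of $\CA=\CA(W)^X$, and then Theorem~\ref{thm:main} applies. In case (ii), with $W=G(r,r,\ell)$ and $\CA''$ inductively free, I would invoke Theorem~\ref{thm:main2}, whose hypothesis is supplied by the lemma that the deletion $\CA(G(r,r,\ell))^X\setminus\{H_0\}$ is free for every flat $X$ and every hyperplane $H_0$ of the restriction. To prove this lemma I would use the explicit description, due to Orlik--Terao, of the restrictions of $\CA(G(r,r,\ell))$: up to a change of coordinates and a Boolean factor they are products of arrangements of type $G(r,1,m_i)$ with one further factor obtained from $\CA(G(r,r,m))$ by adjoining a prescribed set of coordinate hyperplanes, so that a deletion only affects a single such factor; there freeness is preserved by an addition--deletion argument built on the known coexponents of $G(r,r,m)$ and $G(r,1,m)$ and on the fact that the hyperplane restrictions of these building blocks are again of the same shape. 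In case (iii), with $W\in\{G_{33},G_{34}\}$ and $\rank\CA\le 4$, there are only finitely many pairs $(X,H_0)$ up to the action of $W$; for each I would check inductive freeness of $(\CA'',\kappa)$ either by verifying that $\CA$ itself is inductively free and invoking Theorem~\ref{thm:main}, or by verifying --- with the help of \GAP{}, \CHEVIE{}, \Singular{} or \Sage{} --- that $\CA\setminus\{H_0\}$ is free and $\CA''$ is inductively free and invoking Theorem~\ref{thm:main2}.

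For the converse, assume $(\CA'',\kappa)$ is inductively free and that (i) fails, i.e.\ $\CA(W)$ is not inductively free. By the classification of inductively free reflection arrangements \cite{hogeroehrle:indfree}, $W$ then lies in $\{G(r,r,\ell):r,\ell\ge 3\}\cup\{G_{24},G_{27},G_{29},G_{31},G_{33},G_{34}\}$, and in the range $\rank\CA(W)\ge 5$ only $G(r,r,\ell)$ with $\ell\ge 5$, $G_{33}$ and $G_{34}$ remain. If $W=G_{33}$, then $X\ne V$ already forces $\rank\CA\le 4$, so (iii) holds. If $W=G_{34}$ and $\rank\CA=5$, i.e.\ $X$ is a hyperplane of $\CA(G_{34})$, then one checks that $(\CA'',\kappa)$ is not inductively free --- for instance, $\CA=\CA(G_{34})^X\cong\CA(G_{33})$, so with the canonical multiplicities corresponding under this isomorphism $(\CA'',\kappa)\cong(\CA(G_{33})'',\kappa)$, which is not inductively free by Theorem~\ref{thm:kappa1} since $\CA(G_{33})$ is not inductively free and has rank $5$ --- contradicting the assumption; hence $\rank\CA\le 4$, so (iii) holds for $G_{34}$ as well. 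There remains $W=G(r,r,\ell)$, where I must show (ii), i.e.\ that $\CA''=\CA(G(r,r,\ell))^Y$ is inductively free; equivalently, if $\CA''$ is not inductively free I must show $(\CA'',\kappa)$ is not inductively free. Here I would argue by induction on $\rank\CA$, using \cite[Thm.~1.2]{amendhogeroehrle:indfree}: a non-inductively-free restriction $\CA(G(r,r,\ell))^Y$ has an irreducible factor that is not inductively free, and this factor is either one of the factors of $\CA$ not meeting $H_0$ --- in which case it survives with trivial multiplicity in $(\CA'',\kappa)$ and forces the latter to be non-inductively free --- or it arises from the irreducible factor of $\CA$ meeting $H_0$, in which case one concludes by the inductive hypothesis together with the product formula for Ziegler restrictions and the transfer theorems.

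The step I expect to be the main obstacle is exactly this last dichotomy: controlling how the product and coordinate-hyperplane structure of the restriction $\CA''=\CA(G(r,r,\ell))^Y$ is inherited from that of $\CA$ and of its deletion $\CA\setminus\{H_0\}$, so that the obstruction to inductive freeness of $\CA''$ genuinely propagates to $(\CA'',\kappa)$. A second, more computational, hurdle is the freeness lemma for the deletions $\CA(G(r,r,\ell))^X\setminus\{H_0\}$, where a single addition--deletion step does not always suffice and one must feed in the precise combinatorics of restrictions of $\CA(G(r,r,\ell))$.
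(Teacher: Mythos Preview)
Your overall plan---use the transfer Theorems~\ref{thm:main} and \ref{thm:main2}, reduce via the classifications in Theorems~\ref{thm:indfree1} and \ref{thm:indfree2}, and treat the residual cases directly---matches the paper's. However, several of the reductions you propose do not go through.

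The central gap is your ``freeness lemma'' that $\CA(G(r,r,\ell))^X\setminus\{H_0\}$ is always free. This fails. Every such restriction is (up to coordinates) an intermediate arrangement $\CA_\ell^k(r)$; these are irreducible, not products of $G(r,1,m_i)$-pieces, so your structural description is incorrect. For a non-coordinate hyperplane $H_0=H_{s,t}(\zeta)$ the deletion $\CA_\ell^k(r)\setminus\{H_0\}$ is typically \emph{not} free: e.g.\ for $\CA=\CA_5^1(r)$, $r\ge 3$, and $H_0=H_{2,3}(1)$ one has $\exp\CA=\{1,r+1,2r+1,3r+1,4r-3\}$ while $\exp\CA''=\exp\CA_4^2(r)=\{1,r+1,2r+1,3r-1\}$, and $3r-1$ lies in neither; since $\CA$ and $\CA''$ are both free, $\CA'$ cannot be. Hence Theorem~\ref{thm:main2} is unavailable in exactly the cases where you need it. The paper instead builds explicit induction tables from $(\CA'',\one)$ to $(\CA'',\kappa)$, controlling the Euler multiplicities via Proposition~\ref{ATWEulerProp} and Lemma~\ref{lemma:free_sequence_to_ziegler_multiplicity}. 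The same obstruction bites in (iii): for the rank-$4$ restrictions of $G_{33}$ and $G_{34}$ listed in Table~\ref{tableB}, the paper notes explicitly that $\CA'$ is not free, so neither Theorem~\ref{thm:main} (since $\CA$ is not inductively free) nor Theorem~\ref{thm:main2} applies; explicit induction tables are unavoidable.

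Your converse arguments also break. The restriction $(G_{34},A_1)$ is \emph{not} isomorphic to $\CA(G_{33})$---they have different exponents and cardinalities---so you cannot import Theorem~\ref{thm:kappa1}; the paper instead shows by computer that the rank-$4$ Ziegler restrictions $((G_{34},A_1^2),\kappa)$ and $((G_{34},A_2),\kappa)$ are not even additively free. For $W=G(r,r,\ell)$, your inductive product argument has no traction because the $\CA_p^k(r)$ are irreducible; the paper's mechanism is instead \emph{localization}: when $\CA''$ is not inductively free one exhibits $X\in L(\CA'')$ with $\kappa_X\equiv\one$ and $(\CA'')_X\cong\CA(G(r,r,3))$ or $\CA_{\ell-2}^{k'}(r)$ (non-inductively free by Theorem~\ref{thm:indfree2}), whence $(\CA'',\kappa)$ is not inductively free by Theorem~\ref{thm:localmulti}.
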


Theorem  \ref{thm:kappa1} shows that the question of inductive freeness of $(\CA'', \kappa)$ for $\CA = \CA(W)$ a complex reflection arrangement does not depend on the choice of hyperplane. In contrast for restrictions of complex reflection arrangements this question is sensitive to the choice of hyperplane for the restriction, 
according to Theorem \ref{thm:kappa2}(ii) in view of Theorem \ref{thm:indfree2}(i)(b).

The next observation follows from Theorem \ref{thm:kappa2} along with  
the classification of all 
inductively free restrictions of 
reflection arrangements from 
\cite[Thm.~1.2]{amendhogeroehrle:indfree},
see Theorem \ref{thm:indfree2}, which is the analogue of Corollary \ref{cor:kappa1} for the class of restrictions of 
reflection arrangements.

\begin{corollary}
\label{cor:kappa2}
	Let $W$ be a finite, irreducible, complex 
reflection group with reflection arrangement 
$\CA(W)$ and let $X \in L(\CA(W))$. 
Let $\CA = \CA(W)^X$ be the restriction of 
$\CA(W)$ to $X$.
	Let $\CA''$ be the restriction of $\CA$ to 
	a hyperplane in $\CA$.
Then $(\CA'', \kappa)$ is inductively free
if and only if $\CA''$ itself is inductively free. 
\end{corollary}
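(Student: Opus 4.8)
The plan is to derive the corollary by laying the classification of Theorem~\ref{thm:kappa2} alongside the classification of inductively free restrictions of reflection arrangements in Theorem~\ref{thm:indfree2}, and checking that the two lists of cases match. First, some bookkeeping: since restriction is transitive, the hyperplane $H_0$ of $\CA=\CA(W)^X$ is a flat $Y\in L(\CA(W)^X)\subseteq L(\CA(W))$ with $Y\subsetneq X$ and $\dim Y=\dim X-1$, and $\CA''=(\CA(W)^X)^{H_0}=\CA(W)^Y$ is again a restriction of the reflection arrangement; hence the inductive freeness of $\CA''$ is itself governed by Theorem~\ref{thm:indfree2}. If $X=V$, then $\CA=\CA(W)$ and the claim is precisely Corollary~\ref{cor:kappa1}, so I would assume $X\neq V$. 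I would also dispose at once of the degenerate range: if $\rank\CA\leq 3$, then $\exp(\CA'',\kappa)=\exp\CA\setminus\{1\}$ has at most two parts, so $(\CA'',\kappa)$ is a free multiarrangement of rank $\leq 2$ and $\CA''$ a simple arrangement of rank $\leq 2$, both of which are inductively free, and the asserted equivalence holds trivially. From here on $\rank\CA\geq 4$, so $\rank\CA(W)\geq 5$, and the only irreducible $W$ for which $\CA(W)$ is not inductively free that can still occur are $W=G(r,r,\ell)$ and $W=G_{33},G_{34}$.

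For the implication ``$(\CA'',\kappa)$ inductively free $\Rightarrow\CA''$ inductively free'' I would feed $(\CA'',\kappa)$ into Theorem~\ref{thm:kappa2} and split into its three cases. In case~(i), $\CA(W)$ is inductively free, hence hereditarily inductively free by Theorem~\ref{thm:indfree1}(ii), so the restriction $\CA''=\CA(W)^Y$ is inductively free. Case~(ii) is immediate, since it asserts that $\CA''$ is inductively free. In case~(iii), $W=G_{33}$ or $G_{34}$ and $\rank\CA\leq 4$, so $\rank\CA''=\rank\CA-1\leq 3$; here I would invoke Theorem~\ref{thm:indfree2} to the effect that every restriction of $\CA(G_{33})$ or of $\CA(G_{34})$ of rank at most $3$ is inductively free.

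For the converse, ``$\CA''$ inductively free $\Rightarrow(\CA'',\kappa)$ inductively free'', I would show the hypothesis forces one of (i)--(iii) and then quote Theorem~\ref{thm:kappa2}. If $\CA(W)$ is inductively free, (i) holds. Otherwise $W\in\{G(r,r,\ell),G_{33},G_{34}\}$ by the reduction above; if $W=G(r,r,\ell)$ then (ii) holds verbatim, and if $W=G_{33}$ or $G_{34}$ then I claim $\rank\CA\leq 4$ — this is automatic for $G_{33}$ because $\rank\CA(G_{33})=5$ and $X\neq V$, and for $G_{34}$ because $\rank\CA=5$ would make $\CA''=\CA(G_{34})^Y$ a restriction of rank $4$, which by Theorem~\ref{thm:indfree2} is \emph{not} inductively free, contradicting the hypothesis — so (iii) holds.

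The argument is therefore essentially a careful matching of two classifications, and the only genuinely non-formal input — hence the main obstacle — is the exceptional-type analysis underlying case~(iii): one must read off from Theorem~\ref{thm:indfree2} that the restrictions $\CA(G_{33})^Y$ and $\CA(G_{34})^Y$ are inductively free precisely when $\rank Y\leq 3$ and fail to be so when $\rank Y=4$, so that the rank threshold ``$\rank\CA\leq 4$'' in Theorem~\ref{thm:kappa2}(iii) is matched exactly by the threshold on the restriction side. The remaining ingredients — the $G(r,r,\ell)$ case, which Theorem~\ref{thm:kappa2}(ii) already phrases in terms of inductive freeness of $\CA''$; the low-rank degenerate cases; and the case in which $\CA(W)$ is itself inductively free — are all immediate from the quoted results.
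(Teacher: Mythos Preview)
Your proposal is correct and follows exactly the route the paper intends: the paper states that Corollary~\ref{cor:kappa2} follows from Theorem~\ref{thm:kappa2} together with Theorem~\ref{thm:indfree2} (and Proposition~\ref{prop:intermediate}), and your argument is precisely the case-by-case matching of these two classifications. The paper leaves this matching implicit, whereas you spell out the bookkeeping (transitivity of restriction, the $X=V$ case via Corollary~\ref{cor:kappa1}, the low-rank reduction, and the rank count for $G_{33}$ and $G_{34}$), which is appropriate and accurate.
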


We mention that the equivalences from both Corollaries \ref{cor:kappa1} and \ref{cor:kappa2}
are false in general
 (i.e., if $\CA$ is not a reflection arrangement, and not a restriction of such).
For the failure of the forward implication, see
\cite[Ex.~1.13]{hogeroehrle:ZieglerII} and for the failure of 
the reverse implication, see
\cite[Ex.~2.24]{hogeroehrle:Ziegler}.
In view of these elementary counterexamples, 
the equivalences of these two corollaries 
are rather striking.

\subsection{} 
In this subsection we allow $\BBK$ to be an  arbitrary field again.
In a separate part, we also study 
multiplicities which are
concentrated at a single hyperplane. 
These were introduced by
Abe, Terao and Wakefield,
{\cite[\S 5]{abeteraowakefield:euler}.
	It turns out that they are closely related to Ziegler's
	canonical multiplicity, 
	see Proposition \ref{prop:delta}. 
	
	\begin{defn}
		\label{def:delta}
		Let $\CA$ be a simple arrangement.
		Fix $H_0 \in \CA$ and $m_0 \in \BBZ_{\ge 1}$ and 
		define the 
		\emph{multiplicity $\delta$ concentrated at $H_0$}
		by
		\[
		\delta(H) := \delta_{H_0,m_0}(H) := 
		\begin{cases}
		m_0 & \text{ if } H = H_0,\\
		1   & \text{ else}.
		\end{cases}
		\]
	\end{defn}
	In general a multiarrangement  $(\CA, \mu)$ need not be free 
for a free hyperplane arrangement $\CA$ and 
an arbitrary multiplicity $\mu$, 
e.g.~see \cite[Ex.~14]{ziegler:multiarrangements}.
However, as opposed to the general case, both $\CA$ and $(\CA, \delta)$ do 
	inherit freeness from each other.
	
	\begin{theorem}
			[{\cite[Thm.~1.7]{hogeroehrle:Ziegler}}]
		\label{thm:delta-free}
		Let $\CA$ be an arrangement.
		Fix $H_0 \in \CA$, $m_0 \in \BBZ_{\ge 1}$ and let 
		$\delta = \delta_{H_0,m_0}$ be 
		the multiplicity concentrated at $H_0$.
		Then $\CA$ is free 
		with exponents
		$\exp \CA = \{1, e_2, \ldots, e_\ell\}$
		if and only if
		$(\CA, \delta)$ is free with exponents
		$\exp (\CA, \delta) = \{m_0, e_2, \ldots, e_\ell\}$. 
	\end{theorem}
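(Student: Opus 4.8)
The plan is to prove both implications directly, by shuttling a homogeneous basis back and forth across the Euler derivation $\theta_E$ and applying Saito's criterion in its two forms: the classical one for the simple arrangement $\CA$, and Ziegler's multiarrangement version for $(\CA,\delta)$. Write $S$ for the coordinate ring of the ambient space, $\alpha_0$ for a defining linear form of $H_0$, and $Q(\CA) = \prod_{H\in\CA}\alpha_H$. Two preliminary observations set everything up: since $m_0\ge 1$ one has $D(\CA,\delta)\subseteq D(\CA)$; and $\alpha_0^{m_0-1}\theta_E$ is a homogeneous element of $D(\CA,\delta)$ of degree $m_0$, since it sends $\alpha_0$ to $\alpha_0^{m_0}$ and every other $\alpha_H$ into $(\alpha_H)$. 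One may also reduce at the outset to $\CA$ essential; in any event, whenever $1\in\exp\CA$ the derivation $\theta_E$ is part of a homogeneous basis of $D(\CA)$, which is standard.

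For the forward implication, fix a homogeneous basis $\theta_1=\theta_E,\theta_2,\dots,\theta_\ell$ of $D(\CA)$ with $\deg\theta_i=e_i$ (so $e_1=1$). For $i\ge 2$ write $\theta_i(\alpha_0)=\alpha_0 h_i$ with $h_i\in S$ homogeneous of degree $e_i-1$, and set $\phi_i:=\theta_i-h_i\theta_E$, so that $\phi_i(\alpha_0)=0$ while $\phi_i(\alpha_H)\in(\alpha_H)$ for $H\ne H_0$; hence $\phi_i\in D(\CA,\delta)$. Together with $\phi_1:=\alpha_0^{m_0-1}\theta_E$ these are $\ell$ homogeneous elements of $D(\CA,\delta)$ of degrees $m_0,e_2,\dots,e_\ell$. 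The coefficient matrix of $(\phi_1,\dots,\phi_\ell)$ is obtained from that of $(\theta_E,\theta_2,\dots,\theta_\ell)$ by elementary row operations followed by scaling the first row by $\alpha_0^{m_0-1}$, so by multilinearity its determinant is $\alpha_0^{m_0-1}$ times a nonzero scalar multiple of $Q(\CA)$, i.e.\ a nonzero scalar multiple of $\prod_{H\in\CA}\alpha_H^{\delta(H)}=\alpha_0^{m_0}\prod_{H\ne H_0}\alpha_H$. Saito's criterion for multiarrangements then shows $(\phi_1,\dots,\phi_\ell)$ is a basis of $D(\CA,\delta)$, so $(\CA,\delta)$ is free with $\exp(\CA,\delta)=\{m_0,e_2,\dots,e_\ell\}$.

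For the converse, fix a homogeneous basis $\eta_1,\dots,\eta_\ell$ of $D(\CA,\delta)$ with degrees $m_0,e_2,\dots,e_\ell$, and expand $\alpha_0^{m_0-1}\theta_E=\sum_i c_i\eta_i$ with $c_i\in S$ homogeneous of degree $m_0-\deg\eta_i$. The crucial step is to apply this identity to $\alpha_0$: writing $\eta_i(\alpha_0)=\alpha_0^{m_0}g_i$ (possible since $\eta_i\in D(\CA,\delta)$), one obtains $\alpha_0^{m_0}=\alpha_0^{m_0}\sum_i c_ig_i$, hence $\sum_i c_ig_i=1$ and so $(c_1,\dots,c_\ell)=S$. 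As the $c_i$ are homogeneous, some $c_j$ must be a nonzero scalar, which forces $\deg\eta_j=m_0$; consequently $\{\alpha_0^{m_0-1}\theta_E\}\cup\{\eta_i:i\ne j\}$ is again a basis of $D(\CA,\delta)$. Now run the determinant computation in reverse: dividing the row of $\alpha_0^{m_0-1}\theta_E$ by $\alpha_0^{m_0-1}$ turns the coefficient matrix of this basis into the coefficient matrix of $\{\theta_E\}\cup\{\eta_i:i\ne j\}$, whose determinant is therefore a nonzero scalar multiple of $\prod_{H}\alpha_H^{\delta(H)}/\alpha_0^{m_0-1}=Q(\CA)$. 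Since $\theta_E$ and all $\eta_i$ lie in $D(\CA)$, the classical Saito criterion yields that $\CA$ is free with $\exp\CA=\{1\}\cup\bigl(\{m_0,e_2,\dots,e_\ell\}\setminus\{m_0\}\bigr)=\{1,e_2,\dots,e_\ell\}$.

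I expect the main obstacle to be this converse direction, and within it the claim that $\alpha_0^{m_0-1}\theta_E$ belongs to a homogeneous basis of $D(\CA,\delta)$: a priori its expansion could involve only basis elements of degree $<m_0$ with non-constant coefficients, and the evaluation-at-$\alpha_0$ argument is precisely what rules this out and pins down the degree-$m_0$ basis element to be swapped. Everything else is bookkeeping: the two exponent-sum identities $1+\sum_{i\ge2}e_i=|\CA|$ and $m_0+\sum_{i\ge2}e_i=|\delta|$ are equivalent, so no separate degree count is needed. An alternative route would be induction on $m_0$ via the Abe--Terao--Wakefield addition--deletion theorem applied at $H_0$, but that requires controlling the Euler multiplicity of $\delta_{H_0,m_0}$ on $\CA''$, which need not agree with the canonical multiplicity $\kappa$ once $m_0$ is large; so I would favour the direct Saito approach above.
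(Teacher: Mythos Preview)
Your argument is correct. Note, however, that the present paper does not supply its own proof of this statement: it is quoted from \cite[Thm.~1.7]{hogeroehrle:Ziegler}, with the forward implication going back to \cite[Prop.~5.2]{abeteraowakefield:euler} (see the sentence introducing Proposition~\ref{prop:delta}). Your forward direction---replacing $\theta_E$ in a homogeneous basis by $\alpha_0^{m_0-1}\theta_E$ and the remaining $\theta_i$ by $\theta_i-h_i\theta_E$ so that they annihilate $\alpha_0$, then invoking Saito's criterion---is the standard argument and matches those sources. Your converse, pinning down a degree-$m_0$ basis element to swap for $\alpha_0^{m_0-1}\theta_E$ by evaluating the basis expansion at $\alpha_0$, is a clean self-contained application of Saito's criterion in both directions.

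One correction to your closing commentary, which does not affect the proof itself: the Euler multiplicity $\delta^*$ of $\delta_{H_0,m_0}$ upon restriction at $H_0$ \emph{does} coincide with $\kappa$ for every $m_0\ge 2$; this is exactly Proposition~\ref{prop:delta}(ii). Indeed, your own forward computation applied to each rank-$2$ localization $\CA_Y$ with $H_0\in\CA_Y$ yields $\exp(\CA_Y,\delta_Y)=\{m_0,\,|\CA_Y|-1\}$ and $\exp(\CA_Y',\delta_Y')=\{m_0-1,\,|\CA_Y|-1\}$, so by Remark~\ref{rem:euler} the common exponent is $|\CA_Y|-1=\kappa(Y)$. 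The genuine obstruction to an addition--deletion proof of the converse is different: one would need $(\CA'',\kappa)$ to be free with exponents contained in $\exp(\CA,\delta)$ \emph{before} knowing that $\CA$ is free, whereas Theorem~\ref{thm:zieglermulti} supplies that implication only in the other direction. Your direct Saito argument sidesteps this circularity.
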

	
Our aim here is to strengthen Theorem \ref{thm:delta-free} to inductive freeness.

	\begin{theorem}
	\label{thm:delta-indfree}
	Let	$H_0 \in \CA$, $m_0 \in \BBZ_{\ge 1}$ and let 
	$\delta = \delta_{H_0,m_0}$ be 
	the multiplicity concentrated at $H_0$.
	Then $\CA$ is inductively free 
	if and only if
	$(\CA, \delta)$ is inductively free.
\end{theorem}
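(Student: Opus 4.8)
The plan is to prove both implications by induction, using the characterisation of inductive freeness for multiarrangements via addition/deletion triples together with Theorem \ref{thm:delta-free}, which already handles ordinary freeness and provides the exponent bookkeeping. We work by induction on $|\CA| + m_0$. The base cases are the empty arrangement (with any $m_0$, where $(\CA,\delta)$ is just $m_0$ points on a line, which is inductively free) and $m_0 = 1$, where $(\CA,\delta)$ is the simple arrangement $\CA$ itself and there is nothing to prove.

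For the forward implication, suppose $\CA$ is inductively free. If $m_0 = 1$ we are done, so assume $m_0 \ge 2$. The key move is to set up a multiarrangement addition–deletion triple at $H_0$: let $(\CA,\delta)$, $(\CA,\delta')$ and $(\CA'', \delta^*)$ be the triple where $\delta'$ agrees with $\delta$ except $\delta'(H_0) = m_0 - 1$, and $\delta^*$ is the Euler (restriction) multiplicity on $\CA'' = \CA^{H_0}$. By the Abe–Terao–Wakefield addition–deletion theorem for multiarrangements, if two of the three are free with the appropriate exponents then so is the third; and the inductive-freeness version (the multiarrangement analogue used throughout \cite{hogeroehrle:ZieglerII}) upgrades this to inductive freeness. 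By Theorem \ref{thm:delta-free}, $(\CA,\delta')$ is free with exponents $\{m_0 - 1, e_2, \dots, e_\ell\}$, and by induction on $m_0$ it is in fact inductively free. So it remains to show the restriction multiarrangement $(\CA'', \delta^*)$ is inductively free with exponents $\{e_2,\dots,e_\ell\}$. Here one identifies $\delta^*$: the Euler multiplicity of the triple concentrated at $H_0$ turns out to be exactly Ziegler's canonical multiplicity $\kappa$ on $\CA''$ (this is the content alluded to in Proposition \ref{prop:delta}, and it is where one checks that the local multiplicity contributed along each $Y \in \CA''$ equals $|\CA_Y| - 1$, independent of $m_0 \ge 2$). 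Since $\CA$ is inductively free, Theorem \ref{thm:main} gives that $(\CA'',\kappa)$ is inductively free with exponents $\{e_2,\dots,e_\ell\}$, so all hypotheses of the inductive addition theorem are met and $(\CA,\delta)$ is inductively free.

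For the reverse implication, suppose $(\CA,\delta)$ is inductively free; we want $\CA$ inductively free. Again induct on $m_0$; if $m_0 = 1$ then $(\CA,\delta) = \CA$ and we are done. For $m_0 \ge 2$ we run the same triple in the other direction. An inductively free multiarrangement admits \emph{some} addition–deletion triple realising the last step of a chain; the delicate point is that we need a triple at the hyperplane $H_0$ specifically, so that deletion decreases $m_0$ rather than $|\CA|$. To get this we use the exponent constraint from Theorem \ref{thm:delta-free}: $\exp(\CA,\delta) = \{m_0, e_2, \dots, e_\ell\}$, and the division/restriction map at $H_0$ forces the exponents of the admissible triple at $H_0$ to be compatible only with $\delta' = \delta_{H_0, m_0-1}$ and $\delta^* = \kappa$. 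Concretely: since $(\CA,\delta)$ is inductively free, by the multiarrangement addition theorem applied at $H_0$ we get that $(\CA, \delta')$ and $(\CA'', \kappa)$ are free with the predicted exponents; by Theorem \ref{thm:delta-free} again, freeness of $(\CA,\delta')$ is equivalent to freeness of $\CA$ — but we want the stronger inductive conclusion. So instead we argue: inductive freeness of $(\CA,\delta)$ together with the identification $\delta^* = \kappa$ and Theorem \ref{thm:main2} (or a direct induction) yields inductive freeness of $(\CA,\delta')$, hence by the inductive hypothesis on $m_0$ that $\CA$ is inductively free.

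The main obstacle is the reverse direction, and specifically the need to know that when $(\CA,\delta)$ is inductively free one can always find a deletion step that peels off multiplicity \emph{at $H_0$} (rather than somewhere else in $\CA$), together with the companion fact that the resulting restriction is inductively free so that the induction closes. Handling this cleanly will likely require a lemma isolating the local structure at $H_0$: that for a concentrated multiplicity, the only "interesting" hyperplane in any chain witnessing inductive freeness is $H_0$ — made precise via the exponent shift in Theorem \ref{thm:delta-free} — and that the associated restriction multiarrangement is the Ziegler restriction $(\CA'',\kappa)$, independent of $m_0$. Once the equality $\delta^* = \kappa$ and this reduction are in place, both directions become a short induction on $m_0$ driven by Theorems \ref{thm:main}, \ref{thm:main2} and \ref{thm:delta-free}.
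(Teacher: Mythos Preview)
Your forward implication is fine and matches the argument behind \cite[Cor.~5.6]{hogeroehrle:ZieglerII}: set up the triple at $H_0$, use induction on $m_0$ for $(\CA,\delta')$, identify the Euler restriction as $(\CA'',\kappa)$ via Proposition~\ref{prop:delta}, and invoke Theorem~\ref{thm:main}.

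The reverse implication, however, has a real gap, and it is exactly the one you flag yourself. You need the last step of an inductive chain for $(\CA,\delta)$ to occur at $H_0$, and you have no mechanism for forcing this. The exponent set $\{m_0,e_2,\dots,e_\ell\}$ from Theorem~\ref{thm:delta-free} does not single out $H_0$; nothing prevents the witness triple from being taken at some $H\neq H_0$. Your attempted rescue, deducing inductive freeness of $(\CA,\delta')=(\CA,\delta_{H_0,m_0-1})$ from that of $(\CA,\delta)$ via the deletion part of Theorem~\ref{thm:add-del} or via Theorem~\ref{thm:main2}, does not work: deletion gives freeness, not inductive freeness, and Theorem~\ref{thm:main2} concerns $(\CA'',\kappa)$, not $(\CA,\delta')$.

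The paper's proof avoids this entirely by \emph{not} insisting on $H_0$. It takes whatever hyperplane $H$ the inductive chain hands you and splits into two cases. If $H=H_0$, then $(\CA',\delta')=(\CA,\delta_{H_0,m_0-1})$ is inductively free and induction on $m_0$ finishes. If $H\neq H_0$, the key observation is Lemma~\ref{lem:delta-restriction}: the Euler multiplicity $\delta^*$ on $\CA^H$ is again a concentrated multiplicity, namely $\delta_{H_0\cap H,\,m_0}$, and of course the deletion $(\CA',\delta')=(\CA\setminus\{H\},\delta_{H_0,m_0})$ is concentrated as well. Both $(\CA',\delta')$ and $(\CA^H,\delta^*)$ are inductively free by assumption, and both underlying arrangements have strictly fewer hyperplanes than $\CA$, so induction on $|\CA|$ gives that the simple arrangements $\CA'$ and $\CA^H$ are inductively free. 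Theorem~\ref{thm:delta-free} then translates the exponent containment for the multiarrangement triple into the exponent containment for the simple triple, and Definition~\ref{def:indfree-simple} yields that $\CA$ is inductively free. The missing ingredient in your proposal is precisely this lemma about the stability of concentrated multiplicities under Euler restriction at $H\neq H_0$, together with the double induction on $|\CA|$ and $m_0$.
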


The forward implication of Theorem \ref{thm:delta-indfree}
is \cite[Cor.~5.6]{hogeroehrle:ZieglerII}.
We observe that Theorem \ref{thm:delta-indfree} gives a uniform case free proof of 
\cite[Thm.~1.9]{hogeroehrle:Ziegler}.

The following combines
\cite[Prop.~5.2]{abeteraowakefield:euler}, parts of its proof
and Theorem \ref{thm:zieglermulti}. While not needed for the proof of 
Theorem \ref{thm:delta-indfree}, this result does illustrate the connection with the canonical multiplicity.

\begin{proposition}[{\cite[Prop.~2.14]{hogeroehrle:Ziegler}}]
	\label{prop:delta}
	Let $\CA$ be a free arrangement with exponents 
	$\exp \CA = \{1, e_2, \ldots, e_\ell\}$.
	Let $H_0 \in \CA$, $m_0 \ge 1$, and  
	$\delta = \delta_{H_0,m_0}$. 
	Let  $(\CA'', \delta^*)$ be the restriction with respect to $H_0$. 
	Then we have
	\begin{itemize}
		\item[(i)] 
		$(\CA, \delta)$ is free with exponents
		$\exp (\CA, \delta) = \{m_0, e_2, \ldots, e_\ell\}$;
		\item[(ii)] 
		$(\CA'', \delta^*) = (\CA'', \kappa)$ is free with exponents
		$\exp  (\CA'', \kappa) = \{e_2, \ldots, e_\ell\}$.
	\end{itemize}
\end{proposition}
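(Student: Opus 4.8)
The plan is to reduce everything to a single local statement: that for $m_0 \geq 2$ the Euler multiplicity $\delta^*$ of $\delta = \delta_{H_0,m_0}$ on $\CA'' = \CA^{H_0}$ agrees with Ziegler's canonical multiplicity $\kappa$. Granting this, part (ii) is immediate from Theorem \ref{thm:zieglermulti}, and part (i) follows by an induction on $m_0$ built on the Addition--Deletion Theorem \ref{thm:add-del}.

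\emph{The key claim: $\delta^* = \kappa$ on $\CA''$ for every $m_0 \geq 2$.} Fix $X \in \CA''$ and put $k := |\CA_X| \geq 2$; the localized multiplicity $\delta_X$ takes the value $m_0$ on $H_0$ and $1$ on the other $k-1$ hyperplanes of the rank-two arrangement $\CA_X$, so $|\delta_X| = m_0 + k - 1$. If $k = 2$, then $\delta^*(X) = 1 = k-1$ by Proposition \ref{ATWEulerProp}(1); if $k \geq 3$ and $m_0 \leq k$, then $|\delta_X| \leq 2k-1$ and $m_0 > 1$, so $\delta^*(X) = k - 1$ by Proposition \ref{ATWEulerProp}(2). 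The remaining range $k \geq 3$, $m_0 > k$ is not covered by Proposition \ref{ATWEulerProp}, and here I would compute $D(\CA_X, \delta_X)$ by hand: choose coordinates with $X = \{x_1 = x_2 = 0\}$, $\alpha_{H_0} = x_1$, and the other hyperplanes of $\CA_X$ equal to $\ker(x_2 - c_i x_1)$ with $c_1, \dots, c_{k-1}$ pairwise distinct, and set $\beta := \prod_{i=1}^{k-1}(x_2 - c_i x_1)$. Then $\theta := \beta\,\partial_{x_2} \in D(\CA_X, \delta_X)$ lies outside $\alpha_{H_0}\Der(S)$, while $\psi := x_1^{m_0}\partial_{x_1} + x_1 g\,\partial_{x_2}$ lies in $D(\CA_X, \delta_X) \cap \alpha_{H_0}\Der(S)$ provided the homogeneous $g$ of degree $m_0 - 1$ is chosen to satisfy $g|_{x_2 = c_i x_1} = c_i x_1^{m_0 - 1}$ for all $i$; these $k-1$ conditions form a linear system containing an invertible $(k-1)\times(k-1)$ Vandermonde block, so they are solvable because $m_0 - 1 \geq k - 1$. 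Since $\{\theta, \psi, \partial_{x_3}, \dots, \partial_{x_\ell}\}$ has coefficient determinant a nonzero scalar times $Q(\CA_X, \delta_X)$, Saito's criterion shows it is a homogeneous basis of $D(\CA_X, \delta_X)$; as $\theta \notin \alpha_{H_0}\Der(S)$ and $\psi \in \alpha_{H_0}\Der(S)$, Definition \ref{def:Euler} gives $\delta^*(X) = \pdeg\theta = k - 1$. In all cases $\delta^*(X) = k - 1 = |\CA_X| - 1 = \kappa(X)$.

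With the claim in hand, part (ii) reads $(\CA'', \delta^*) = (\CA'', \kappa)$, which by Theorem \ref{thm:zieglermulti} (applied to the free arrangement $\CA$ with $\exp \CA = \{1, e_2, \dots, e_\ell\}$) is free with $\exp(\CA'', \kappa) = \{e_2, \dots, e_\ell\}$. For part (i) I would induct on $m_0$. For $m_0 = 1$, $\delta = \one$ and $(\CA, \delta) = \CA$ is free with $\exp = \{1, e_2, \dots, e_\ell\} = \{m_0, e_2, \dots, e_\ell\}$ by hypothesis. For $m_0 \geq 2$, consider the triple of $(\CA, \delta_{H_0,m_0})$ with respect to $H_0$ (Definition \ref{def:Euler}); since $\delta_{H_0,m_0}(H_0) = m_0 > 1$ its deletion is $(\CA, \delta_{H_0,m_0-1})$, free with $\exp = \{m_0 - 1, e_2, \dots, e_\ell\}$ by the inductive hypothesis, and by the claim together with Theorem \ref{thm:zieglermulti} its Euler restriction is $(\CA'', \kappa)$, free with $\exp = \{e_2, \dots, e_\ell\}$. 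Applying Theorem \ref{thm:add-del} (with $\{b_1, \dots, b_{\ell-1}\} = \{e_2, \dots, e_\ell\}$ and $b_\ell = m_0$) gives that $(\CA, \delta_{H_0,m_0})$ is free with $\exp = \{m_0, e_2, \dots, e_\ell\}$, closing the induction.

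The main obstacle is the case $|\CA_X| = k \geq 3$ with $m_0 > k$ in the key claim, where Proposition \ref{ATWEulerProp} gives no information and one is forced into the explicit rank-two computation above; the real content there is the Vandermonde solvability, which pins the $\alpha_{H_0}$-free basis element to degree $k-1$ rather than to the larger exponent $m_0$. Everything else — the two appeals to Proposition \ref{ATWEulerProp}, the induction via Theorem \ref{thm:add-del}, and the use of Theorem \ref{thm:zieglermulti} — is routine. One may also note that part (i) is precisely the forward implication of Theorem \ref{thm:delta-free}, so that only part (ii), i.e.\ the identification $\delta^* = \kappa$, genuinely requires the local analysis.
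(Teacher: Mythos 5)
Your proposal is correct and follows essentially the same route as the source the paper cites for this statement (the paper gives no proof of Proposition \ref{prop:delta}, referring to \cite[Prop.~2.14]{hogeroehrle:Ziegler}, which in turn combines \cite[Prop.~5.2]{abeteraowakefield:euler} with Theorem \ref{thm:zieglermulti}): one identifies the Euler multiplicity $\delta^*$ with $\kappa$ locally on rank-two localizations and then runs the addition--deletion induction on $m_0$ via Theorem \ref{thm:add-del}. Your Vandermonde/Saito computation correctly settles the case $k\ge 3$, $m_0>k$ that the quoted parts of Proposition \ref{ATWEulerProp} leave open, and your restriction of the identification $\delta^*=\kappa$ to $m_0\ge 2$ is the right reading of the statement, since for $m_0=1$ the Euler restriction of $(\CA,\one)$ is just the simple restriction, so part (ii) is only meant for (and only true for) $m_0\ge 2$.
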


The paper is organized as follows.
In Section \ref{ssect:hyper} we recall 
the fundamental results for free arrangements, 
in particular Terao's 
Addition Deletion Theorem \ref{thm:add-del-simple}
and the subsequent
notion of an inductively free arrangement.

Section \ref{ssec:multi} is devoted to 
multiarrangements and their freeness. 
Here we present the Addition Deletion Theorem 
due to Abe, Terao, and Wakefield, 
\cite[Thm.~0.8]{abeteraowakefield:euler}, see Theorem \ref{thm:add-del}.
This is followed by a 
discussion of inductive freeness 
for multiarrangements. 
Here we recall results from 
\cite{hogeroehrleschauenburg:free}
which show the compatibility of this notion with 
products and localization for multiarrangements
that are used in the sequel.

In Section \ref{ssect:refl}
we recall the classification of 
the inductively free reflection arrangements from 
\cite[Thm.~1.1, 1.2]{hogeroehrle:indfree} (Theorem \ref{thm:indfree1})
and the classification of 
the inductively free restrictions of 
reflection arrangements \cite[Thm.~1.2, Thm.~1.3]{amendhogeroehrle:indfree} (Theorem \ref{thm:indfree2}).
We combine the latter with 
Theorems \ref{thm:main} and \ref{thm:main2} to obtain a first result towards Theorem \ref{thm:kappa2} in 
Theorem \ref{thm:indfreemain1}.

Theorem \ref{thm:kappa2}  is proved in Section
\ref{sec:proofs}
and 
Theorem \ref{thm:delta-indfree} in Section
\ref{sec:proof}.

\section{Recollections and Preliminaries}
\label{sect:prelim}

\subsection{Freeness of hyperplane arrangements}
\label{ssect:hyper}
Let $V = \BBK^\ell$ 
be an $\ell$-dimensional $\BBK$-vector space.
A \emph{hyperplane arrangement} is a pair
$(\CA, V)$, where $\CA$ is a finite collection of hyperplanes in $V$.
Usually, we simply write $\CA$ in place of $(\CA, V)$.
We write $|\CA|$ for the number of hyperplanes in $\CA$.
The empty arrangement in $V$ is denoted by $\Phi_\ell$.

The \emph{lattice} $L(\CA)$ of $\CA$ is the set of subspaces of $V$ of
the form $H_1\cap \dotsm \cap H_i$ where $\{ H_1, \ldots, H_i\}$ is a subset
of $\CA$. 
For $X \in L(\CA)$, we have two associated arrangements, 
firstly
$\CA_X :=\{H \in \CA \mid X \subseteq H\} \subseteq \CA$,
the \emph{localization of $\CA$ at $X$}, 
and secondly, 
the \emph{restriction of $\CA$ to $X$}, $(\CA^X,X)$, where 
$\CA^X := \{ X \cap H \mid H \in \CA \setminus \CA_X\}$.
Note that $V$ belongs to $L(\CA)$
as the intersection of the empty 
collection of hyperplanes and $\CA^V = \CA$. 
The lattice $L(\CA)$ is a partially ordered set by reverse inclusion:
$X \le Y$ provided $Y \subseteq X$ for $X,Y \in L(\CA)$.

Suppose $\CA \neq \Phi_\ell$. Fix a member $H_0$ in $\CA$. Set $\CA' = \CA \setminus \{H_0\}$ and  $\CA'' = \CA^{H_0}$. Then $(\CA, \CA', \CA'')$ is frequently referred to as a \emph{triple of arrangements}. 

Let $S = S(V^*)$ be the symmetric algebra of the dual space $V^*$ of $V$.
If $x_1, \ldots , x_\ell$ is a basis of $V^*$, then we identify $S$ with 
the polynomial ring $\BBK[x_1, \ldots , x_\ell]$.
Letting $S_p$ denote the $\BBK$-subspace of $S$
consisting of the homogeneous polynomials of degree $p$ (along with $0$),
$S$ is naturally $\BBZ$-graded: $S = \oplus_{p \in \BBZ}S_p$, where
$S_p = 0$ in case $p < 0$.

Let $\Der(S)$ be the $S$-module of algebraic $\BBK$-derivations of $S$.
Using the $\BBZ$-grading on $S$, $\Der(S)$ becomes a graded $S$-module.
For $i = 1, \ldots, \ell$, 
let $D_i := \partial/\partial x_i$ be the usual derivation of $S$.
Then $D_1, \ldots, D_\ell$ is an $S$-basis of $\Der(S)$.
We say that $\theta \in \Der(S)$ is 
\emph{homogeneous of polynomial degree p}
provided 
$\theta = \sum_{i=1}^\ell f_i D_i$, 
where $f_i$ is either $0$ or homogeneous of degree $p$
for each $1 \le i \le \ell$.
In this case we write $\pdeg \theta = p$.

Let $\CA$ be an arrangement in $V$. 
Then for $H \in \CA$ we fix $\alpha_H \in V^*$ with
$H = \ker(\alpha_H)$.
The \emph{defining polynomial} $Q(\CA)$ of $\CA$ is given by 
$Q(\CA) := \prod_{H \in \CA} \alpha_H \in S$.

The \emph{module of $\CA$-derivations} of $\CA$ is 
defined by 
\[
D(\CA) := \{\theta \in \Der(S) \mid \theta(\alpha_H) \in \alpha_H S
\text{ for each } H \in \CA \} .
\]
We say that $\CA$ is \emph{free} if the module of $\CA$-derivations
$D(\CA)$ is a free $S$-module.

With the $\BBZ$-grading of $\Der(S)$, 
also $D(\CA)$ 
becomes a graded $S$-module,
\cite[Prop.~4.10]{orlikterao:arrangements}.
If $\CA$ is a free arrangement, then the $S$-module 
$D(\CA)$ admits a basis of $\ell$ homogeneous derivations, 
say $\theta_1, \ldots, \theta_\ell$, \cite[Prop.~4.18]{orlikterao:arrangements}.
While the $\theta_i$'s are not unique, their polynomial 
degrees $\pdeg \theta_i$ 
are unique (up to ordering). This multiset is the set of 
\emph{exponents} of the free arrangement $\CA$
and is denoted by $\exp \CA$.

The fundamental \emph{Addition Deletion Theorem} 
due to Terao  \cite{terao:freeI} plays a 
crucial role in the study of free arrangements, 
\cite[Thm.~4.51]{orlikterao:arrangements}.

\begin{theorem}
\label{thm:add-del-simple}
Suppose $\CA \neq \Phi_\ell$ and
let $(\CA, \CA', \CA'')$ be a triple of arrangements. Then any 
two of the following statements imply the third:
\begin{itemize}
\item[(i)] $\CA$ is free with $\exp\CA = \{ b_1, \ldots , b_{\ell -1}, b_\ell\}$;
\item[(ii)] $\CA'$ is free with $\exp\CA' = \{ b_1, \ldots , b_{\ell -1}, b_\ell-1\}$;
\item[(iii)] $\CA''$ is free with $\exp\CA'' = \{ b_1, \ldots , b_{\ell -1}\}$.
\end{itemize}
\end{theorem}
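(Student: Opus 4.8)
The forward implication being \cite[Cor.~5.6]{hogeroehrle:ZieglerII}, I concentrate on the reverse one: assuming $(\CA,\delta_{H_0,m_0})$ inductively free, I must deduce that $\CA$ is inductively free. An inductively free multiarrangement is free, so Theorem~\ref{thm:delta-free} already tells us that $\CA$ is free, with $\exp\CA=\{1,e_2,\ldots,e_\ell\}$ and $\exp(\CA,\delta_{H_0,m_0})=\{m_0,e_2,\ldots,e_\ell\}$; the same remark applies to every deletion we shall meet. I would argue by induction on the pair $(\rank\CA,\,m_0+|\CA|)$ ordered lexicographically, reducing throughout to essential arrangements (which is harmless for inductive freeness). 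If $\rank\CA\le 2$ then $\CA$ is inductively free for trivial reasons, which disposes of the base case.

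For the inductive step, fix an inductive chain for the multiarrangement $(\CA,\delta_{H_0,m_0})$ and examine its last step: it presents $(\CA,\delta_{H_0,m_0})$ as added from a deletion term $(\widetilde\CA,\widetilde\mu)$, with $\widetilde\mu=\delta_{H_0,m_0}-\one_{H_1}$ for some $H_1\in\CA$, and a restriction term $(\CA^{H_1},\mu^*)$ with $\mu^*$ the Euler multiplicity of Theorem~\ref{thm:add-del}, both of which are again inductively free. If $H_1=H_0$ and $m_0\ge 2$, then $\widetilde\CA=\CA$ and $\widetilde\mu=\delta_{H_0,m_0-1}$, so $(\CA,\delta_{H_0,m_0-1})$ is inductively free and the induction (smaller $m_0+|\CA|$, same rank) gives that $\CA$ is inductively free. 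Otherwise $\delta_{H_0,m_0}(H_1)=1$, so $\widetilde\CA=\CA\setminus\{H_1\}$ and $\widetilde\mu$ is the multiplicity $\delta_{H_0,m_0}$ (or $\one$, in case $H_1=H_0$ and $m_0=1$) on $\CA\setminus\{H_1\}$; in particular $\CA\setminus\{H_1\}$ is inductively free by the induction hypothesis. The crucial observation is now that the restriction term is again concentrated: for $Z\in\CA^{H_1}$ not equal to the image $Z_0$ of $H_0$ (all $Z$, if $H_0$ does not survive into $\widetilde\CA$), the rank-two localization $(\CA_Z,\delta_{H_0,m_0}|_{\CA_Z})$ carries only the multiplicity $\one$, whence $\mu^*(Z)=1$; consequently $\mu^*=\delta_{Z_0,m}$ for $m=\mu^*(Z_0)$. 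This is the content of Proposition~\ref{prop:delta}. Since $\rank\CA^{H_1}<\rank\CA$, the induction hypothesis applied to $(\CA^{H_1},\delta_{Z_0,m})$ yields that $\CA^{H_1}$ is inductively free.

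It remains, in this last case, to pass from inductive freeness of $\CA\setminus\{H_1\}$ and of $\CA^{H_1}$ to that of $\CA$, which is Terao's Addition Deletion Theorem~\ref{thm:add-del-simple} for the triple $(\CA,\CA\setminus\{H_1\},\CA^{H_1})$. Its exponent hypotheses hold: the corresponding exponent relation for the multiarrangement triple is valid because the chain step is, and, removing the distinguished exponents $m_0$ and $1$ via Theorem~\ref{thm:delta-free}, this translates precisely into Terao's exponent relation for $(\CA,\CA\setminus\{H_1\},\CA^{H_1})$. The main obstacle is the structural claim used above --- that the Euler restriction of the concentrated multiplicity $\delta_{H_0,m_0}$ with respect to a hyperplane other than $H_0$ is again concentrated at one hyperplane --- and this is exactly where the close relationship with Ziegler's canonical multiplicity recorded in Proposition~\ref{prop:delta} is needed; I expect its proof, a short rank-two computation with the Euler multiplicity that distinguishes whether or not $H_0$ lies above the flat in question, to be the only genuinely delicate point.
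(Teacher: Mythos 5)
Your proposal does not prove the statement it is attached to. The statement is Terao's Addition Deletion Theorem (Theorem \ref{thm:add-del-simple}): for a triple $(\CA,\CA',\CA'')$ of \emph{simple} arrangements, any two of the three freeness-plus-exponents assertions imply the third. What you have written instead is a proof of Theorem \ref{thm:delta-indfree}, the equivalence of inductive freeness of $\CA$ and of the multiarrangement $(\CA,\delta_{H_0,m_0})$: you assume $(\CA,\delta_{H_0,m_0})$ is inductively free, run a double induction on rank and on $m_0+|\CA|$, and use the observation (Lemma \ref{lem:delta-restriction} in the paper, which you attribute to Proposition \ref{prop:delta}) that the Euler restriction of a concentrated multiplicity is again concentrated. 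Nowhere do you address the actual content of Theorem \ref{thm:add-del-simple}: nothing about the modules $D(\CA)$, $D(\CA')$, $D(\CA'')$, the relation between their graded structures, or why two of the exponent conditions force the third. The paper itself does not reprove this theorem either; it is quoted from Terao via \cite[Thm.~4.51]{orlikterao:arrangements}, and any genuine proof needs the algebraic machinery there (e.g.\ the exact sequence relating $D(\CA)$, $D(\CA')$ and $D(\CA'')$ together with freeness criteria), none of which appears in your text.

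Moreover, even setting aside the mismatch, your argument cannot be repaired into a proof of Theorem \ref{thm:add-del-simple}, because in its final step you explicitly invoke ``Terao's Addition Deletion Theorem~\ref{thm:add-del-simple} for the triple $(\CA,\CA\setminus\{H_1\},\CA^{H_1})$'' to pass from inductive freeness of the deletion and restriction to that of $\CA$. As an argument for Theorem \ref{thm:add-del-simple} this is circular. As an argument for Theorem \ref{thm:delta-indfree} it is essentially the paper's own proof (same induction, same key lemma about the restricted multiplicity being concentrated, same appeal to Theorems \ref{thm:delta-free}, \ref{thm:add-del-simple} and \ref{thm:add-del}), but that is a different statement from the one you were asked to prove.
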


Theorem \ref{thm:add-del-simple} motivates the following notion.

\begin{defn}
	[{\cite[Def.~4.53]{orlikterao:arrangements}}]
\label{def:indfree-simple}
The class $\CIF$ of \emph{inductively free} arrangements 
is the smallest class of arrangements subject to
\begin{itemize}
\item[(i)] $\Phi_\ell \in \CIF$ for each $\ell \ge 0$;
\item[(ii)] if there exists a hyperplane $H_0 \in \CA$ such that both
$\CA'$ and $\CA''$ belong to $\CIF$, and $\exp \CA '' \subseteq \exp \CA'$, 
then $\CA$ also belongs to $\CIF$.
\end{itemize}
\end{defn}

\begin{defn}
	[{\cite[p.~253]{orlikterao:arrangements}}]
	\label{def:heredfree-simple}
	An arrangement $\CA$ is called \emph{hereditarily inductively free} if $\CA$ and every restriction of $\CA$ is inductively free.
\end{defn}

\subsection{Freeness of  multiarrangements}
\label{ssec:multi}
A \emph{multiarrangement}  is a pair
$(\CA, \mu)$ consisting of a hyperplane arrangement $\CA$ and a 
\emph{multiplicity} function
$\mu : \CA \to \BBZ_{\ge 0}$ associating 
to each hyperplane $H$ in $\CA$ a non-negative integer $\mu(H)$.
The \emph{order} of the multiarrangement $(\CA, \mu)$ 
is defined by 
$|\mu| := 
%|(\CA, \mu)| = 
\sum_{H \in \CA} \mu(H)$.
For a multiarrangement $(\CA, \mu)$, the underlying 
arrangement $\CA$ is sometimes called the associated 
\emph{simple} arrangement, and so $(\CA, \mu)$ itself is  
simple if and only if $\mu(H) = 1$ for each $H \in \CA$. We indicate this case with the notation $\mu = \one$. 
In case $\mu \equiv 0$, $(\CA, \mu)$ is the empty arrangement.

Let $(\CA, \mu)$ be a multiarrangement in $V$ and let 
$X \in L(\CA)$. The 
\emph{localization of $(\CA, \mu)$ at $X$} is defined to be $(\CA_X, \mu_X)$,
where $\mu_X = \mu |_{\CA_X}$.

The \emph{defining polynomial} $Q(\CA, \mu)$ 
of the multiarrangement $(\CA, \mu)$ is given by 
\[
Q(\CA, \mu) := \prod_{H \in \CA} \alpha_H^{\mu(H)},
\] 
a polynomial of degree $|\mu|$ in $S$.

Following Ziegler \cite{ziegler:multiarrangements},
we extend the notion of freeness to multiarrangements as follows.
The \emph{module of $\CA$-derivations} of $(\CA, \mu)$ is 
defined by 
\[
D(\CA, \mu) := \{\theta \in \Der(S) \mid \theta(\alpha_H) \in \alpha_H^{\mu(H)} S 
\text{ for each } H \in \CA\}.
\]
We say that $(\CA, \mu)$ is \emph{free} if 
$D(\CA, \mu)$ is a free $S$-module, 
\cite[Def.~6]{ziegler:multiarrangements}.

As in the case of simple arrangements,
$D(\CA, \mu)$ is a $\BBZ$-graded $S$-module and 
thus, if $(\CA, \mu)$ is free, there is a 
homogeneous basis $\theta_1, \ldots, \theta_\ell$ of $D(\CA, \mu)$.
The multiset of the unique polynomial degrees $\pdeg \theta_i$ 
forms the set of \emph{exponents} of the free multiarrangement $(\CA, \mu)$
and is denoted by $\exp (\CA, \mu)$.

We recall the construction from \cite{abeteraowakefield:euler} for the 
counterpart of Theorem \ref{thm:add-del-simple} in this more general setting.

\begin{defn}
\label{def:Euler}
Let $(\CA, \mu) \ne \Phi_\ell$ be a multiarrangement. Fix $H_0$ in $\CA$.
We define the \emph{deletion}  $(\CA', \mu')$ and \emph{Euler restriction} $(\CA'', \mu^*)$
of $(\CA, \mu)$ with respect to $H_0$ as follows.
If $\mu(H_0) = 1$, then set $\CA' = \CA \setminus \{H_0\}$
and define $\mu'(H) = \mu(H)$ for all $H \in \CA'$.
If $\mu(H_0) > 1$, then set $\CA' = \CA$
and define $\mu'(H_0) = \mu(H_0)-1$ and
$\mu'(H) = \mu(H)$ for all $H \ne H_0$.

Let $\CA'' = \{ H \cap H_0 \mid H \in \CA \setminus \{H_0\}\ \}$.
The \emph{Euler multiplicity} $\mu^*$ of $\CA''$ is defined as follows.
Let $Y \in \CA''$. Since the localization $\CA_Y$ is of rank $2$, the
multiarrangement $(\CA_Y, \mu_Y)$ is free, 
\cite[Cor.~7]{ziegler:multiarrangements}. 
According to 
\cite[Prop.~2.1]{abeteraowakefield:euler},
the module of derivations 
$D(\CA_Y, \mu_Y)$ admits a particular homogeneous basis
$\{\theta_Y, \psi_Y, D_3, \ldots, D_\ell\}$,
such that $\theta_Y \notin \alpha_0 \Der(S)$
and $\psi_Y \in \alpha_0 \Der(S)$,
where $H_0 = \ker \alpha_0$.
Then on $Y$ the Euler multiplicity $\mu^*$ is defined
to be $\mu^*(Y) = \pdeg \theta_Y$.

Often, we refer to 
$(\CA, \mu), (\CA', \mu')$ and $(\CA'', \mu^*)$ 
as the \emph{triple} of 
$(\CA, \mu)$ with respect to $H_0$. 
\end{defn}

The following observation is a direct consequence of Definition \ref{def:Euler}.

\begin{remark}
	\label{rem:euler}
	Let $(\CA,\mu)$ be a multiarrangement, $H_0\in \CA$, and let $(\CA'',\mu^*)$ be the Euler restriction corresponding to $H_0$. For $X\in\CA''$ the value $\mu^*(X)$ is simply the common value of the non-zero exponents of $(\CA_X,\mu_X)$ and $(\CA_X',\mu_X')$.	
\end{remark} 

It is useful to be able to determine the Euler multiplicity explicitly in some 
relevant instances. For that purpose, 
we recall parts of \cite[Prop.~4.1]{abeteraowakefield:euler}.

\begin{proposition}
	\label{ATWEulerProp}
	Let $(\CA,\mu)$ be a multiarrangement. Let $X\in\CA^{H_0}$, $m_0=\mu(H_0)$, $k=\vert \CA_X\vert$, and $m_1=\max\{\mu(H)\mid H\in \CA_X\backslash\{H_0\}\}$.
	\begin{enumerate}
		\item If $k=2$, then $\mu^*(X)=m_1$.
		\item If $\vert \mu_X\vert\leq 2k-1$ and $m_0>1$, then $\mu^*(X)=k-1$.
		\item If $\mu_X\equiv 2$, then $\mu^*(X)=k$.
	\end{enumerate}
\end{proposition}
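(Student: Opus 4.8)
\textbf{Proof plan for Proposition \ref{ATWEulerProp}.}
The strategy is to verify each of the three cases directly from the definition of the Euler multiplicity, namely by exhibiting (or characterizing) the distinguished homogeneous basis $\{\theta_Y,\psi_Y,D_3,\dots,D_\ell\}$ of $D(\CA_X,\mu_X)$ with $\theta_Y\notin\alpha_0\Der(S)$ and $\psi_Y\in\alpha_0\Der(S)$, and reading off $\pdeg\theta_Y$. Since $\CA_X$ has rank $2$, after a linear change of coordinates we may assume all hyperplanes in $\CA_X$ contain the coordinate subspace $\ker x_1\cap\ker x_2$, so that $D_3,\dots,D_\ell$ automatically lie in $D(\CA_X,\mu_X)$ and the problem reduces to a genuine rank-$2$ multiarrangement in the variables $x_1,x_2$, with $H_0=\ker x_1$ say and $m_0=\mu(H_0)$. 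By Ziegler's result \cite[Cor.~7]{ziegler:multiarrangements} together with \cite[Prop.~2.1]{abeteraowakefield:euler}, the relevant rank-$2$ facts are available; the exponents of a free rank-$2$ multiarrangement sum to $|\mu_X|$, and the pair $\{\mu^*(X),|\mu_X|-\mu^*(X)\}$ is exactly $\exp(\CA_X,\mu_X)$ by Remark \ref{rem:euler}.

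For part (1), when $k=2$ the localization $\CA_X$ consists of just the two hyperplanes $H_0$ and one other hyperplane $H_1$ with $\mu(H_1)=m_1$; a direct computation shows $D(\CA_X,\mu_X)$ has basis $\{\alpha_1^{m_1}D_{x_1}\text{-type vector},\ \alpha_0^{m_0}\text{-type vector}\}$, giving exponents $\{m_1,m_0\}$, and the member not divisible by $\alpha_0$ has polynomial degree $m_1$, whence $\mu^*(X)=m_1$. For part (3), if $\mu_X\equiv 2$ then $|\mu_X|=2k$ and the multiarrangement is the "doubling" of the simple rank-$2$ arrangement of $k$ lines; its exponents are known to be $\{k,k\}$ (this is the balanced case), so $\mu^*(X)=k$ regardless of which hyperplane is $H_0$. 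For part (2), the hypothesis $|\mu_X|\le 2k-1$ together with $m_0>1$ forces the exponents of $(\CA_X,\mu_X)$ to be as balanced as possible subject to the constraint; one shows the exponents must be $\{k-1,|\mu_X|-k+1\}$, and since $m_0>1$ the derivation of degree $|\mu_X|-k+1$ is the one divisible by $\alpha_0$ (it "absorbs" the extra multiplicity on $H_0$), leaving $\theta_Y$ of degree $k-1$. In each case I would either cite the explicit rank-$2$ basis formulas from \cite[\S2--\S4]{abeteraowakefield:euler} or reconstruct them via Saito's criterion applied to explicitly written derivations.

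The main obstacle is part (2): unlike (1) and (3), the exponents of a general rank-$2$ multiarrangement are not a simple closed-form function of the data, and the inequality $|\mu_X|\le 2k-1$ is precisely the range in which one must argue that no further imbalance can occur — i.e.\ that the smaller exponent is exactly $k-1$ and not smaller. The cleanest way to handle this is to invoke the known description of rank-$2$ multiarrangement exponents (e.g.\ via the combinatorial formula of Wakamiko or the recursion in \cite{abeteraowakefield:euler}), from which the bound $|\mu_X|\le 2k-1$ immediately pins down the pair $\{k-1,|\mu_X|-k+1\}$; then identifying which basis vector is divisible by $\alpha_0$ uses only that $m_0\ge 2$, so that the hyperplane $H_0$ is not the unique "light" hyperplane. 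Since Proposition \ref{ATWEulerProp} is quoted verbatim as a subset of \cite[Prop.~4.1]{abeteraowakefield:euler}, in the write-up I would keep the argument brief, recording the rank-$2$ reduction and then citing \cite{abeteraowakefield:euler} for the explicit exponent computations rather than redeveloping them.
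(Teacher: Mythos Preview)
The paper does not give a proof of this proposition at all: it is stated purely as a quotation of parts of \cite[Prop.~4.1]{abeteraowakefield:euler}, introduced with ``we recall parts of \cite[Prop.~4.1]{abeteraowakefield:euler}'' and followed immediately by the next definition. So there is no argument in the paper to compare against; the intended ``proof'' is simply the citation.

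Your plan is therefore more than the paper provides. The rank-$2$ reduction and the treatments of (1) and (3) are fine and standard. For (2) you correctly flag the genuine subtlety: the exponents of a rank-$2$ multiarrangement are not combinatorial in general, so the assertion that they equal $\{k-1,\,|\mu_X|-k+1\}$ whenever $|\mu_X|\le 2k-1$ really does require input beyond Saito's criterion on an ad hoc pair of derivations. In \cite{abeteraowakefield:euler} this is handled by explicit constructions specific to that regime; your suggestion to cite those (or Wakamiko's description) rather than redo them is exactly right. One small caution: your phrasing ``the derivation of degree $|\mu_X|-k+1$ is the one divisible by $\alpha_0$ because $m_0>1$'' is the correct conclusion but not quite a proof---when $|\mu_X|=2k-2$ both exponents equal $k-1$ and one must still identify which basis element lies in $\alpha_0\Der(S)$; this is where the hypothesis $m_0>1$ is actually used in \cite{abeteraowakefield:euler}. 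Since you already plan to defer to that reference for the details, your final write-up would match the paper's: state the result and cite \cite[Prop.~4.1]{abeteraowakefield:euler}.
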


The following is a special case of a general 
compatibility result 
of Euler restrictions with localizations from 
\cite[Lem.~2.14]{hogeroehrleschauenburg:free}.

\begin{lemma}
	[{\cite[Lem.~2.16]{hogeroehrle:Ziegler}}]
	\label{lem:euler}
	Let $X \in L(\CA)$, $H_0 \in \CA_X$,
	and let $\CA''$ be the restriction 
	with respect to $H_0$. Let $\delta = \delta_{H_0,m_0}$ 
	be as in Definition \ref{def:delta}. 
	Then we have 
	\begin{itemize}
		\item[(i)]
		$\left((\CA_X)'', (\delta_X)^*\right) = \left((\CA'')_X, (\delta^*)_X\right)$, and 
		\item[(ii)]
		$\left((\CA_X)'', \kappa\right) = \left((\CA'')_X, \kappa_X\right)$ 
		(where $\kappa$ on the left is the canonical multiplicity 
		resulting from restriction of $\CA_X$ to $H_0$).
	\end{itemize}
\end{lemma}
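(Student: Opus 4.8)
The plan is to reduce both identities to a single elementary observation: for each flat $Y$ occurring in the restrictions under consideration, the localization $\CA_Y$ already lies inside $\CA_X$, and both the canonical multiplicity and the Euler multiplicity at $Y$ are computed purely from the rank-$2$ localization at $Y$ (see Definition~\ref{def:kappa} and Remark~\ref{rem:euler}); hence forming this localization before or after passing from $\CA$ to $\CA_X$ makes no difference. Throughout I may assume $\rank X \geq 2$, so that $X \in L(\CA^{H_0})$ and all four objects in the statement are genuinely defined, the remaining low-rank cases being vacuous or trivial.

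First I would check that the two underlying simple arrangements coincide. Since $H_0 \in \CA_X$ forces $X \subseteq H_0$, we have, for every $H \in \CA \setminus \{H_0\}$, that $X \subseteq H_0 \cap H$ if and only if $X \subseteq H$. Unwinding the definitions gives $(\CA_X)^{H_0} = \{H_0 \cap H \mid H \in \CA_X \setminus \{H_0\}\}$, while $(\CA^{H_0})_X = \{H_0 \cap H \mid H \in \CA \setminus \CA_{H_0},\ X \subseteq H_0 \cap H\}$, and the displayed equivalence identifies the latter with the former; so $(\CA_X)^{H_0} = (\CA^{H_0})_X$ as arrangements in $X$.

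Now for the key step, fix $Y$ in this common restriction. Then $X \subseteq Y$, so every $H \in \CA$ with $Y \subseteq H$ satisfies $X \subseteq H$ and hence lies in $\CA_X$; thus $\CA_Y = (\CA_X)_Y$, and consequently $\mu_Y = (\mu_X)_Y$ for any multiplicity $\mu$ on $\CA$, in particular for $\mu = \delta$ and for $\mu = \one$. Part (ii) is then immediate: the canonical multiplicity of $(\CA_X)^{H_0}$ at $Y$ equals $|(\CA_X)_Y| - 1 = |\CA_Y| - 1 = \kappa(Y) = \kappa_X(Y)$. For part (i), Definition~\ref{def:Euler} (or Remark~\ref{rem:euler}) shows that the Euler multiplicity of a restriction to $H_0$, evaluated at $Y$, depends only on the rank-$2$ multiarrangement $(\CA_Y,\mu_Y)$ together with the distinguished hyperplane $H_0$ (which lies in $\CA_Y$ since $Y\subseteq H_0$). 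Applying this with $(\CA_X,\delta_X)$ in place of $(\CA,\delta)$ and using $((\CA_X)_Y,(\delta_X)_Y) = (\CA_Y,\delta_Y)$ yields $(\delta_X)^*(Y) = \delta^*(Y)$, i.e. $(\delta_X)^* = (\delta^*)_X$.

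I would close by remarking that this is precisely the specialization $\mu = \delta$ (resp. $\mu = \one$) of the general compatibility of Euler restriction with localization in \cite[Lem.~2.14]{hogeroehrleschauenburg:free}, the only additional point being the trivial observation that $\delta_{H_0,m_0}$ localizes at $X$ to $\delta_{H_0,m_0}$ on $\CA_X$ and $\one$ to $\one$. Accordingly, the main obstacle here is not conceptual but purely one of bookkeeping: keeping straight which flats $Y$ appear on each side, verifying $X \subseteq Y$ so that the two meanings of ``localize at $Y$'' agree, and disposing of the degenerate low-rank cases so that $(\CA^{H_0})_X$ is a legitimate localization.
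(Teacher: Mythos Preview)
The paper does not supply its own proof of this lemma: it is quoted verbatim from \cite[Lem.~2.16]{hogeroehrle:Ziegler} and flagged just above as a special case of the general compatibility statement \cite[Lem.~2.14]{hogeroehrleschauenburg:free}. Your argument is correct and is exactly the natural one --- showing first that the underlying simple arrangements agree via $X\subseteq H_0$, then that $X\subseteq Y$ forces $\CA_Y=(\CA_X)_Y$, so that both $\kappa$ and the Euler multiplicity (which are local at $Y$) are unchanged --- and you yourself identify this as the specialization $\mu=\delta$, $\mu=\one$ of the cited general lemma. In short, there is nothing to compare against here beyond noting that your write-up fills in precisely the details that the paper outsources to the references.
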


We also require the following tool from \cite{hogeroehrle:ZieglerII}.
This result turns out to be useful in our sequel in determining inductively free Ziegler restrictions. 

\begin{lemma}[{\cite[Lem.~2.12]{hogeroehrle:ZieglerII}}]
	\label{lemma:free_sequence_to_ziegler_multiplicity}
	Let $\CA$ be an $\ell$-arrangement and let $H_1$,\dots,$H_n$ be distinct hyperplanes in $\CA$.
	Define $\CA_0 := \CA \setminus \{H_1,\ldots,H_n\}$, and $\CA_i := \CA_{i-1} \cup \{H_i\}$ for $i = 1, \ldots, n$. 
	Suppose that $\CA_0,\ldots,\CA_n = \CA$ are all free.  Then for a fixed $H \in \CA_0$, we have the following:
	\begin{enumerate}
		\item 
		$(\CA_i^H,\kappa_i)$ is free for $i=0,\ldots,n$, where $\kappa_i$ is the Ziegler multiplicity of $\CA_i^H$.
		
		\item Consider the triple $(\CA_{i}^H,\kappa_i)$, $(\CA_{i-1}^H,\kappa_{i-1})$,  and $((\CA_i^H)^{H \cap H_i}, \kappa_i^*)$. Then the Euler multiplicity  $\kappa_i^*$ is just the 
		Ziegler multiplicity $\kappa$ of the restriction of $\CA_i^{H_i}$ to $H \cap H_i$. 
	\end{enumerate}
\end{lemma}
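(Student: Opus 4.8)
The plan is to leverage Theorem \ref{thm:main2} together with Lemma \ref{lem:euler} and an inductive argument along the given chain $\CA_0 \subseteq \CA_1 \subseteq \dots \subseteq \CA_n = \CA$. First I would observe that part (1) is essentially immediate from Ziegler's Theorem \ref{thm:zieglermulti}: since each $\CA_i$ is free and each $\CA_i$ contains the fixed hyperplane $H$, applying Theorem \ref{thm:zieglermulti} to the pair $(\CA_i, H)$ shows that $(\CA_i^H, \kappa_i)$ is free, with exponents obtained by deleting a single $1$ from $\exp \CA_i$. The only subtlety is that $H \in \CA_0$, hence $H \in \CA_i$ for all $i$, so $\CA_i^H$ and its Ziegler multiplicity $\kappa_i$ are well-defined throughout.

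The substance of the lemma is part (2): the identification of the Euler multiplicity $\kappa_i^*$ arising in the triple $(\CA_i^H, \kappa_i)$, $(\CA_{i-1}^H, \kappa_{i-1})$, $((\CA_i^H)^{H\cap H_i}, \kappa_i^*)$ with the Ziegler multiplicity of the restriction $\CA_i^{H_i}$ to $H\cap H_i$. My approach would be to work locally at the rank-two flat $Y := H \cap H_i \in L(\CA_i^H)$ and to invoke the compatibility of Euler restriction with localization. By Remark \ref{rem:euler}, $\kappa_i^*(Y)$ is the common nonzero exponent of the rank-two multiarrangement $((\CA_i^H)_Y, (\kappa_i)_Y)$. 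Via Lemma \ref{lem:euler}(ii) applied with the flat $Y' := H \cap H_i \in L(\CA_i)$ (so $H_0 := H$, $X := Y'$ in the notation there), one has $((\CA_i)_{Y'}^H, \kappa) = ((\CA_i^H)_Y, (\kappa_i)_Y)$, which reduces the question to the localized rank-three arrangement $(\CA_i)_{Y'}$. Now $(\CA_i)_{Y'}$ consists of all hyperplanes of $\CA_i$ through the codimension-two space $Y'$; these are $H$, $H_i$, and the hyperplanes of $\CA_{i-1}$ through $Y'$. Computing the Ziegler restriction of this small arrangement to $H$ and comparing with its restriction to $H_i$ via Theorem \ref{thm:zieglermulti} (both are rank-two free multiarrangements whose single nonzero exponent is determined by the number of hyperplanes of $\CA_i$ above $Y'$, suitably counted), one reads off that $\kappa_i^*(Y)$ coincides with $\kappa(H_i \cap Y') = \kappa(H\cap H_i)$ computed inside $\CA_i^{H_i}$. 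Running this identification over all $Y \in (\CA_i^H)^{H\cap H_i}$, i.e.\ over all rank-two flats of $\CA_i^H$ containing $H\cap H_i$, yields the asserted equality of multiplicity functions.

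The main obstacle I anticipate is the bookkeeping in the local rank-three computation: one must carefully track which hyperplanes of $\CA_i$ pass through $Y' = H\cap H_i$, distinguish the roles of $H$ and $H_i$ (the former fixed, the latter being added), and verify that the combinatorial count defining Ziegler's $\kappa$ on the restriction to $H_i$ matches the polynomial degree $\pdeg \theta_Y$ extracted from the distinguished basis of $D((\CA_i^H)_Y, (\kappa_i)_Y)$ in Definition \ref{def:Euler}. Here Proposition \ref{ATWEulerProp} should do the heavy lifting: in each local rank-three configuration the multiplicities are either all equal to $1$ on the non-$H$ hyperplanes with $\kappa_i$ large on $H$, or of the special shapes covered by parts (1)--(3), so the Euler multiplicity can be computed explicitly and seen to equal $|\CA_i^{H_i}_{\,H\cap H_i}| - 1$. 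Once the local computation is pinned down, globalizing is routine, and part (1) then also follows a posteriori from iterated application of the multiarrangement Addition-Deletion Theorem \ref{thm:add-del} to the triples in part (2), giving an alternative self-contained derivation.
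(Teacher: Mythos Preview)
The paper does not give its own proof of this lemma; it is quoted verbatim from \cite[Lem.~2.12]{hogeroehrle:ZieglerII} and used as a black box. So there is no in-paper argument to compare against, and I can only assess your proposal on its own merits.

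Your overall strategy is sound: part (1) is indeed immediate from Theorem~\ref{thm:zieglermulti}, and for part (2) localizing to a rank-$3$ subarrangement of $\CA_i$ is exactly the right move. However, two points in your write-up need repair. First, the indexing is tangled: you set $Y := H\cap H_i$ and call it ``a rank-two flat in $L(\CA_i^H)$'' (it is rank one there), evaluate $\kappa_i^*$ at this $Y$ (but $\kappa_i^*$ is a function on hyperplanes of $(\CA_i^H)^{H\cap H_i}$, not on $H\cap H_i$ itself), and only at the end let $Y$ range over $(\CA_i^H)^{H\cap H_i}$. You should from the outset fix $Z\in(\CA_i^H)^{H\cap H_i}$, a codimension-$3$ flat of $V$, and localize $\CA_i$ at $Z$. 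Second, and more substantively, Proposition~\ref{ATWEulerProp} will \emph{not} do the heavy lifting: the rank-$2$ multiarrangement $((\CA_i^H)_Z,(\kappa_i)_Z)$ carries arbitrary Ziegler multiplicities, and none of the hypotheses (1)--(3) there need apply. The clean argument uses only exponents. Set $B:=(\CA_i)_Z$ and $B':=(\CA_{i-1})_Z$; both are free (localizations of free arrangements), with $B' = B\setminus\{H_i\}$. By Lemma~\ref{lem:euler}(ii) and Theorem~\ref{thm:zieglermulti}, $((\CA_i^H)_Z,(\kappa_i)_Z)$ and its deletion at $H\cap H_i$ have exponents $\exp B\setminus\{1\}$ and $\exp B'\setminus\{1\}$ respectively; by Remark~\ref{rem:euler}, $\kappa_i^*(Z)$ is their common entry. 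Theorem~\ref{thm:add-del-simple} applied to the triple $(B,B',B^{H_i})$ identifies that common entry and simultaneously gives $|B^{H_i}|$; since $|B^{H_i}|-1$ is precisely the Ziegler multiplicity of $\CA_i^{H_i}$ at $Z$ after restriction to $H\cap H_i$, the two numbers agree. (Your invocation of Theorem~\ref{thm:main2} at the outset is also a non sequitur; it plays no role here.)
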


Here is the counterpart to Theorem \ref{thm:add-del-simple} in this more general setting from 
\cite{abeteraowakefield:euler}. 

\begin{theorem}
[{\cite[Thm.~0.8]{abeteraowakefield:euler}}
Addition Deletion Theorem for Multiarrangements]
\label{thm:add-del}
Suppose that $(\CA, \mu) \ne \Phi_\ell$.
Fix $H_0$ in $\CA$ and 
let  $(\CA, \mu), (\CA', \mu')$ and  $(\CA'', \mu^*)$ be the triple with respect to $H_0$. 
Then any  two of the following statements imply the third:
\begin{itemize}
\item[(i)] $(\CA, \mu)$ is free with $\exp (\CA, \mu) = \{ b_1, \ldots , b_{\ell -1}, b_\ell\}$;
\item[(ii)] $(\CA', \mu')$ is free with $\exp (\CA', \mu') = \{ b_1, \ldots , b_{\ell -1}, b_\ell-1\}$;
\item[(iii)] $(\CA'', \mu^*)$ is free with $\exp (\CA'', \mu^*) = \{ b_1, \ldots , b_{\ell -1}\}$.
\end{itemize}
\end{theorem}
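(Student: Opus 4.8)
\textbf{Proof plan for Theorem \ref{thm:add-del}.}

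The plan is to reduce the Addition Deletion Theorem for multiarrangements to a statement about short exact sequences of graded $S$-modules, exactly paralleling Terao's original argument for the simple case (Theorem \ref{thm:add-del-simple}), but with the Euler restriction $(\CA'',\mu^*)$ playing the role of $\CA''$. First I would set up the restriction map: for $H_0 = \ker\alpha_0$, there is a natural $S$-linear ``evaluation along $H_0$'' map $\rho\colon D(\CA',\mu') \to D(\CA'',\mu^*)$, obtained by restricting a derivation $\theta$ to the subspace $H_0$ and checking (using the local description of $\mu^*$ from Definition \ref{def:Euler}) that the result lands in $D(\CA'',\mu^*)$. The first key step is to show that the kernel of $\rho$ is exactly $\alpha_0 \cdot D(\CA,\mu)$ when $\mu(H_0) = 1$ (respectively $\alpha_0 \cdot D(\CA,\mu)$ with the appropriate shift when $\mu(H_0)>1$, using the deletion convention), so that one obtains an exact sequence
\[
0 \longrightarrow D(\CA,\mu)[-1] \stackrel{\cdot\alpha_0}{\longrightarrow} D(\CA',\mu') \stackrel{\rho}{\longrightarrow} D(\CA'',\mu^*).
\]
This inclusion-of-kernel computation is the technical heart: it requires the rank-two local analysis from \cite[Prop.~2.1]{abeteraowakefield:euler} guaranteeing the existence of the distinguished basis $\{\theta_Y,\psi_Y,D_3,\ldots,D_\ell\}$ of $D(\CA_Y,\mu_Y)$ with $\theta_Y\notin\alpha_0\Der(S)$ and $\psi_Y\in\alpha_0\Der(S)$, since $\mu^*(Y)=\pdeg\theta_Y$ is precisely what makes $\rho$ well-defined and its kernel no larger than $\alpha_0 D(\CA,\mu)$.

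Next I would record the degree bookkeeping. If one of (i), (ii), (iii) holds, comparing Hilbert series (Poincaré polynomials) of the three modules along the exact sequence above, together with the fact that $|\mu| = |\mu'| + 1$ and $|\mu^*| = |\mu'| - (\text{something})$ controlled by the local exponents, forces the numerical relation among the three exponent multisets stated in (i)--(iii); this is the multiarrangement analogue of the ``$b$'s shift by one'' pattern. Then the standard homological dichotomy applies: assuming two of the three freeness statements, one shows the sequence is in fact \emph{short} exact (i.e.\ $\rho$ is surjective) and that the third module is free. Concretely, if (i) and (ii) hold, freeness of the sub and the total module in a length-$\ell$ situation, combined with the exponent count, shows $\operatorname{im}\rho = D(\CA'',\mu^*)$ is free of the right rank; if (ii) and (iii) hold one uses that an extension of a free module by a free module (with matching graded pieces) is free and has the predicted exponents; and if (i) and (iii) hold one argues similarly, using that $D(\CA',\mu')$ is sandwiched between two free modules with compatible Hilbert series. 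Throughout, Saito's criterion for multiarrangements (the determinant-of-coefficients test, \cite[Thm.~8]{ziegler:multiarrangements}) is the tool that upgrades ``correct Hilbert series'' to ``actually free''.

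The main obstacle I anticipate is verifying surjectivity of $\rho$ (equivalently, that the left-exact sequence is short exact) under each of the three pairs of hypotheses: unlike in the simple case, the target $D(\CA'',\mu^*)$ is governed by the somewhat delicate Euler multiplicity $\mu^*$, which is defined pointwise via a basis choice on each rank-two localization $(\CA_Y,\mu_Y)$ and is not given by a simple closed formula in general (only in the special cases of Proposition \ref{ATWEulerProp}). So the surjectivity argument cannot be purely combinatorial; it must go through the local freeness at each $Y\in\CA''$ and a depth/reflexivity argument to patch the local lifts into a global derivation. A secondary subtlety is handling the two cases $\mu(H_0)=1$ versus $\mu(H_0)>1$ uniformly — in the latter case the ``deletion'' does not remove a hyperplane but lowers a multiplicity, and one must check the exact sequence and the kernel computation still go through with $\alpha_0$ replaced appropriately. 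Once these points are settled, the three implications follow formally from the exact sequence and Saito's criterion, exactly as in the proof of Theorem \ref{thm:add-del-simple}.
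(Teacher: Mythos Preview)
The paper does not give its own proof of this theorem; it is quoted verbatim as \cite[Thm.~0.8]{abeteraowakefield:euler} and used as a black box throughout. So there is no ``paper's own proof'' against which to compare your proposal.

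That said, your outline is essentially the strategy of the original Abe--Terao--Wakefield argument: one sets up the left-exact sequence
\[
0 \longrightarrow D(\CA,\mu) \stackrel{\cdot\alpha_0}{\longrightarrow} D(\CA',\mu') \stackrel{\rho}{\longrightarrow} D(\CA'',\mu^*),
\]
uses the rank-two local analysis (the distinguished basis $\{\theta_Y,\psi_Y\}$ defining $\mu^*$) to see that $\rho$ is well-defined with the stated kernel, and then runs the Hilbert-series and Saito-criterion bookkeeping to deduce each implication. Your identification of the two delicate points---surjectivity of $\rho$ under the various hypotheses, and the uniform treatment of $\mu(H_0)=1$ versus $\mu(H_0)>1$---is accurate; these are exactly where the work in \cite{abeteraowakefield:euler} lies. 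One small correction: the kernel of $\rho$ is $\alpha_0\cdot D(\CA,\mu)$ (not $D(\CA,\mu)[-1]$ as a submodule; the shift is in the grading, not the module), and multiplication by $\alpha_0$ gives an isomorphism $D(\CA,\mu)\cong \alpha_0 D(\CA,\mu)$ of degree $+1$, which is what you meant.
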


Analogous to the simple case, Theorem \ref{thm:add-del} motivates 
the notion of inductive freeness. 

\begin{defn}[{\cite[Def.~0.9]{abeteraowakefield:euler}}]
\label{def:indfree}
The class $\CIFM$ of \emph{inductively free} multiarrangements 
is the smallest class of multiarrangements subject to
\begin{itemize}
\item[(i)] $\Phi_\ell \in \CIFM$ for each $\ell \ge 0$;
\item[(ii)] for a multiarrangement $(\CA, \mu)$, if there exists a hyperplane $H_0 \in \CA$ such that both
$(\CA', \mu')$ and $(\CA'', \mu^*)$ belong to $\CIFM$, and $\exp (\CA'', \mu^*) \subseteq \exp (\CA', \mu')$, 
then $(\CA, \mu)$ also belongs to $\CIFM$.
\end{itemize}
\end{defn}

\begin{remark}
	\label{rem:rank2indfree}
	As for simple arrangements, if $r(\CA) \le 2$,
	then $(\CA, \mu)$  is inductively free for any multiplicity $\mu$,  
	\cite[Cor.~7]{ziegler:multiarrangements}.
\end{remark}

The following is a useful tool for showing that a given 
multiarrangement is 
not inductively free by exhibiting a localization which fails to be  
inductively free.

\begin{theorem}
[{\cite[Thm.~1.3]{hogeroehrleschauenburg:free}}]
\label{thm:localmulti}
The class
$\CIFM$ is
closed under taking localizations.
\end{theorem}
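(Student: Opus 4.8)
The plan is to prove, by induction on the order $|\mu|$, that $(\CA,\mu)\in\CIFM$ implies $(\CA_X,\mu_X)\in\CIFM$ for every $X\in L(\CA)$. The case $|\mu|=0$ is vacuous, so I would take $|\mu|\ge 1$, assume the claim for all smaller orders, and assume $(\CA,\mu)\neq\Phi_\ell$. Then Definition \ref{def:indfree}(ii) provides a hyperplane $H_0\in\CA$ such that, in the triple $(\CA,\mu),(\CA',\mu'),(\CA'',\mu^*)$ with respect to $H_0$, both $(\CA',\mu')$ and $(\CA'',\mu^*)$ lie in $\CIFM$ and $\exp(\CA'',\mu^*)\subseteq\exp(\CA',\mu')$. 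Since deletion lowers the order by exactly one, $|\mu'|=|\mu|-1<|\mu|$; and since $(\CA',\mu')$ and $(\CA'',\mu^*)$ are free, summing exponents and using the displayed inclusion gives $|\mu^*|=\sum\exp(\CA'',\mu^*)\le\sum\exp(\CA',\mu')=|\mu'|<|\mu|$. Hence the induction hypothesis applies to both $(\CA',\mu')$ and $(\CA'',\mu^*)$, so every localization of either of them lies in $\CIFM$.

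Now fix $X\in L(\CA)$. If $X=V$ then $(\CA_X,\mu_X)=(\CA,\mu)$, and if $\rank\CA_X\le 2$ then $(\CA_X,\mu_X)\in\CIFM$ by Remark \ref{rem:rank2indfree}; so assume $\rank\CA_X\ge 3$. If $H_0\notin\CA_X$, then a routine inspection of the two cases $\mu(H_0)=1$ and $\mu(H_0)>1$ in Definition \ref{def:Euler} shows that $(\CA_X,\mu_X)$ is itself a localization of $(\CA',\mu')$, hence lies in $\CIFM$ by the induction hypothesis. This leaves the case $H_0\in\CA_X$, which is the crux. Here I would build $(\CA_X,\mu_X)$ by an addition at $H_0$, using that the triple of $(\CA_X,\mu_X)$ with respect to $H_0$ is the localization at $X$ of the triple of $(\CA,\mu)$ with respect to $H_0$: deletion visibly commutes with localization, so $((\CA_X)',(\mu_X)')=((\CA')_X,(\mu')_X)$ is a localization of $(\CA',\mu')$; and the compatibility of the Euler restriction with localization — Lemma \ref{lem:euler}, in the general form of \cite[Lem.~2.14]{hogeroehrleschauenburg:free} — yields $((\CA_X)'',(\mu_X)^*)=((\CA'')_X,(\mu^*)_X)$, a localization of $(\CA'',\mu^*)$. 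By the induction hypothesis both of these belong to $\CIFM$, and in particular both are free.

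It remains to verify the exponent condition $\exp((\CA_X)'',(\mu_X)^*)\subseteq\exp((\CA_X)',(\mu_X)')$, and I expect this to be the main obstacle, since membership in $\CIFM$ is not visible from $D(\CA_X,\mu_X)$ alone and exponents need not restrict to sub-multisets under localization. I would argue as follows: since $(\CA,\mu)$ is free, so is $(\CA_X,\mu_X)$, by the standard argument localizing $D(\CA,\mu)$ at the prime ideal of $X$ (where the conditions coming from hyperplanes not containing $X$ become vacuous) and then descending freeness to $S$ by graded Nakayama. Now $(\CA_X,\mu_X)$ and its deletion $((\CA_X)',(\mu_X)')$ are both free, so by the standard relation between the exponents of a free multiarrangement and those of its deletion (implicit in the proof of Theorem \ref{thm:add-del} in \cite{abeteraowakefield:euler}) their exponent multisets differ by lowering a single exponent by $1$. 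Feeding this into Theorem \ref{thm:add-del} forces $((\CA_X)'',(\mu_X)^*)$ to be free with exponents equal to the common sub-multiset, which is in particular contained in $\exp((\CA_X)',(\mu_X)')$. With $((\CA_X)',(\mu_X)')$ and $((\CA_X)'',(\mu_X)^*)$ both in $\CIFM$ and this inclusion established, Definition \ref{def:indfree}(ii) applied to $(\CA_X,\mu_X)$ at $H_0$ gives $(\CA_X,\mu_X)\in\CIFM$, completing the induction. Besides this exponent bookkeeping, the only delicate point is the identification of the localized triple with the triple of the localization: the deletion half is immediate, but the Euler restriction half genuinely relies on \cite[Lem.~2.14]{hogeroehrleschauenburg:free}.
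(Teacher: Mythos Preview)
The paper does not prove Theorem~\ref{thm:localmulti}; it is quoted from \cite[Thm.~1.3]{hogeroehrleschauenburg:free} and used as a black box. Your argument is correct and follows the natural route one would expect in the cited source: induction on $|\mu|$, splitting on whether the distinguished $H_0$ lies in $\CA_X$, and in the nontrivial case rebuilding $(\CA_X,\mu_X)$ by addition at $H_0$ using that the triple construction commutes with localization---the deletion half being trivial and the Euler-restriction half being precisely \cite[Lem.~2.14]{hogeroehrleschauenburg:free} (of which Lemma~\ref{lem:euler} here is only a special case, as you note).

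The one point worth making explicit is the exponent step you flag. The version of Theorem~\ref{thm:add-del} stated here presupposes the exponent relation between $(\CA_X,\mu_X)$ and its deletion, so one first has to check that if a multiarrangement and its deletion are both free then their sorted exponents satisfy $d_i\le e_i$ with $\sum(e_i-d_i)=1$, hence differ in exactly one entry by $1$. This is the standard graded argument: expressing a homogeneous basis of $D(\CA_X,\mu_X)$ in terms of one for $D((\CA_X)',(\mu_X)')$ gives a transition matrix of determinant $c\,\alpha_{H_0}$; if some $d_k>e_k$ then the first $k$ basis elements of $D(\CA_X,\mu_X)$ would lie in a free submodule of rank at most $k-1$, a contradiction. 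With that in hand, Theorem~\ref{thm:add-del} applies and yields the required containment $\exp((\CA_X)'',(\mu_X)^*)\subseteq\exp((\CA_X)',(\mu_X)')$. Your use of freeness of localizations of free multiarrangements is also standard. So the proposal is sound; there is simply no in-paper proof to compare it against.
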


We also require the fact that 
inductive freeness for 
multiarrangements behaves well with the 
product construction. 

\begin{theorem}
[{\cite[Thm.~1.4]{hogeroehrleschauenburg:free}}]
\label{thm:products}
A product of multiarrangements belongs to 
$\CIFM$ 
if and only if 
each factor belongs to $\CIFM$.
\end{theorem}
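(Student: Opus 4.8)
The plan is to reduce at once to the case of two factors — write $(\CA,\mu)=(\CA_1,\mu_1)\times(\CA_2,\mu_2)$ in $V=V_1\oplus V_2$, the general statement following by iteration — and then to prove both implications simultaneously by induction on the total order $|\mu_1|+|\mu_2|$. The base case $|\mu_1|+|\mu_2|=0$ is the empty multiarrangement $\Phi_\ell\in\CIFM$, so one may assume the product is non-empty. Every hyperplane of $\CA$ has the form $H\oplus V_2$ with $H\in\CA_1$, or $V_1\oplus H$ with $H\in\CA_2$, and by the symmetry of the two factors it suffices to work with a hyperplane $H_0=H\oplus V_2$ coming from the first factor, where $\mu(H_0)=\mu_1(H)$. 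Throughout I would use the standard product decomposition
\[
D(\CA,\mu)\ \cong\ \big(D(\CA_1,\mu_1)\otimes S(V_2^*)\big)\ \oplus\ \big(S(V_1^*)\otimes D(\CA_2,\mu_2)\big),
\]
which shows that $(\CA,\mu)$ is free exactly when both factors are and that $\exp(\CA,\mu)=\exp(\CA_1,\mu_1)\uplus\exp(\CA_2,\mu_2)$ (padding with zeros as usual).

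The heart of the argument is that deletion and Euler restriction respect products. Write $(\CA_1,\mu_1)'$ and $(\CA_1,\mu_1)''$ for the deletion and Euler restriction of $(\CA_1,\mu_1)$ with respect to $H$. I claim the triple of $(\CA,\mu)$ with respect to $H_0=H\oplus V_2$ is
\[
(\CA,\mu),\qquad (\CA_1,\mu_1)'\times(\CA_2,\mu_2),\qquad (\CA_1,\mu_1)''\times(\CA_2,\mu_2).
\]
For the deletion this is immediate, since lowering or deleting the multiplicity of $H$ leaves the $(\CA_2,\mu_2)$-part untouched. For the Euler restriction one first checks $\CA^{H_0}=\CA_1^{H}\times\CA_2$ as arrangements, and then that the rank-two localizations of $(\CA,\mu)$ along members of $\CA^{H_0}$ come in two flavours: at a flat $Z\oplus V_2$ with $Z\in\CA_1^{H}$ the localization only involves hyperplanes of $\CA_1$, so $\mu^*(Z\oplus V_2)=\mu_1^*(Z)$; at a flat $H\oplus H'$ with $H'\in\CA_2$ the localization is the free two-hyperplane multiarrangement $\{\,H\oplus V_2,\ V_1\oplus H'\,\}$ with multiplicities $\mu_1(H),\mu_2(H')$, so restricting along $H\oplus V_2$ and invoking Remark \ref{rem:euler} gives $\mu^*(H\oplus H')=\mu_2(H')$. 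Hence the Euler multiplicity on $\CA^{H_0}=\CA_1^{H}\times\CA_2$ is precisely $\mu_1^*\times\mu_2$, proving the claim.

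With the claim in hand the induction closes in both directions, using the product formula for exponents together with the trivial observation that $A\uplus C\subseteq B\uplus C$ forces $A\subseteq B$ for finite multisets. If $(\CA_1,\mu_1),(\CA_2,\mu_2)\in\CIFM$ and, say, $(\CA_1,\mu_1)\ne\Phi_\ell$, choose a hyperplane $H$ witnessing $(\CA_1,\mu_1)\in\CIFM$; then both $(\CA_1,\mu_1)'\times(\CA_2,\mu_2)$ and $(\CA_1,\mu_1)''\times(\CA_2,\mu_2)$ have strictly smaller total order (a positive exponent is removed when passing to $(\CA_1,\mu_1)''$, so $|\mu_1^*|<|\mu_1|$), hence lie in $\CIFM$ by induction, and $\exp(\CA_1,\mu_1)''\subseteq\exp(\CA_1,\mu_1)'$ yields the required exponent containment for the product triple, so $(\CA,\mu)\in\CIFM$ by Definition \ref{def:indfree}(ii). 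Conversely, if $(\CA,\mu)\in\CIFM$ it does so via some hyperplane, which by symmetry we may take to be $H_0=H\oplus V_2$; then $(\CA_1,\mu_1)'\times(\CA_2,\mu_2)$ and $(\CA_1,\mu_1)''\times(\CA_2,\mu_2)$ lie in $\CIFM$, by induction $(\CA_1,\mu_1)'$, $(\CA_1,\mu_1)''$ and $(\CA_2,\mu_2)$ all lie in $\CIFM$, and cancelling the common summand $\exp(\CA_2,\mu_2)$ from the exponent containment of the product triple gives $\exp(\CA_1,\mu_1)''\subseteq\exp(\CA_1,\mu_1)'$, so $(\CA_1,\mu_1)\in\CIFM$ by Definition \ref{def:indfree}(ii).

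I expect the only genuine obstacle to be the compatibility claim in the second paragraph — in particular, pinning down the Euler multiplicity on the ``mixed'' flats $H\oplus H'$ and verifying $\CA^{H_0}=\CA_1^{H}\times\CA_2$; the deletion compatibility, the exponent bookkeeping via the product formula, and the multiset cancellation step are all routine.
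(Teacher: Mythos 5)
Your proof is correct and follows essentially the same route as the paper's. The heart of the matter — that the triple of $(\CA_1,\mu_1)\times(\CA_2,\mu_2)$ with respect to $H\oplus V_2$ is $(\CA_1,\mu_1)'\times(\CA_2,\mu_2)$ and $(\CA_1,\mu_1)''\times(\CA_2,\mu_2)$, with the Euler multiplicity on $\CA_1^H\times\CA_2$ equal to $\mu_1^*$ on flats from the first factor and to $\mu_2$ on the mixed flats $H\oplus H'$ (computed via the rank-two localizations as you do) — is precisely the compatibility lemma the paper isolates, after which both implications close by the same induction on $|\mu_1|+|\mu_2|$ using the product formula for exponents and multiset cancellation.
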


We recall a natural partial order on the 
set of multiplicities for a simple arrangement and the notion of an additively free multiarrangement from \cite[Def.~2.19]{hogeroehrle:ZieglerII}.

\begin{defn}
	\label{def:order}
Let $\CA$ be a fixed $\ell$-arrangement. 	
\begin{itemize}
	\item [(i)] 		For multiplicities $\mu_1$ and $\mu_2$ on $\CA$, define 
	$\mu_1 \le \mu_2$ provided $\mu_1(H) \le \mu_2(H)$ for every $H$ in $\CA$.
		\item [(ii)] 	
				The multiarrangement $(\CA, \mu)$ is said to be  
		\emph{additively free} if there is a \emph{free filtration} of multiplicities 
		$\mu_0 < \mu_1 < \cdots < \mu_n = \mu$,
		of $(\CA, \mu)$, i.e., where each $(\CA, \mu_i)$ is free with $|\mu_i| = i$. In particular, $(\CA, \mu_0) = \Phi_\ell$. 
		\end{itemize}
\end{defn}

Clearly, if $(\CA, \mu)$ is inductively free, it is additively free.

\section{Reflection arrangements and their restrictions}
\label{ssect:refl}

Let $W$ be a complex reflection group in $\GL(V)$. 
The \emph{reflection arrangement} $\CA = \CA(W)$ of $W$ in $V$ is 
the hyperplane arrangement 
consisting of the reflecting hyperplanes of the elements in $W$
acting as reflections in $V$.
If $\CA = \CA(W)$ is a reflection arrangement of 
the complex reflection group $W$, 
then $\CA$ is free, thanks to work of 
Terao \cite{terao:freereflections}.
Thus $(\CA'', \kappa)$ is also free, by 
Theorem \ref{thm:zieglermulti}.

The irreducible finite complex reflection groups were 
classified by Shephard and Todd, \cite{shephardtodd}.
Using the classification and nomenclature of 
\cite{shephardtodd},
we recall the classification of all inductively free reflection arrangements $\CA(W)$. 

\begin{theorem} [{\cite[Thm.~1.1, 1.2]{hogeroehrle:indfree}}]
	\label{thm:indfree1}
	For a finite complex reflection group  $W$,  
	let  $\CA = \CA(W)$ be its reflection arrangement. Then the following hold:
	\begin{itemize}
		\item[(i)]  $\CA$ is inductively free if and only if 
		$W$ does not admit an irreducible factor
		isomorphic to a monomial group 
		$G(r,r,\ell)$ for $r, \ell \ge 3$, 
		$G_{24}, G_{27}, G_{29}, G_{31}, G_{33}$, or $G_{34}$.
		\item[(ii)]  $\CA$ is inductively free if and only if $\CA$ is hereditarily inductively free.
	\end{itemize}
\end{theorem}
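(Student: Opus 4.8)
The plan is a case-by-case analysis along the Shephard--Todd classification of irreducible finite complex reflection groups, preceded by the standard reduction to the irreducible case: since a product of arrangements is inductively free exactly when each of its factors is (Theorem~\ref{thm:products}, applied to simple arrangements, i.e.\ $\mu=\one$), and $\CA(W)$ is the product of the $\CA(W_i)$ over the irreducible factors $W_i$ of $W$, it suffices to decide inductive freeness of $\CA(W)$ for $W$ irreducible. Two further tools will be used repeatedly: every arrangement of rank at most $2$ is inductively free (Remark~\ref{rem:rank2indfree}); and $\CIF$ is closed under localization (the simple-arrangement case of Theorem~\ref{thm:localmulti}, valid since the Euler restriction of a \emph{simple} multiarrangement $(\CA,\one)$ at any $H_0$ is again simple, so membership of a simple arrangement in $\CIFM$ coincides with inductive freeness in the sense of Definition~\ref{def:indfree-simple}). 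Finally, for a free arrangement Terao's factorization theorem forces $\chi_\CA(t)=\prod_i(t-e_i)$ with $\exp\CA=\{e_i\}$, which together with Terao's Addition--Deletion Theorem~\ref{thm:add-del-simple} provides the basic obstruction to starting an induction chain.

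For the \emph{if}-direction of (i) I would verify inductive freeness of $\CA(W)$ for each good irreducible $W$ directly. The monomial groups split into three subcases: $\CA(G(r,1,\ell))$ is supersolvable, hence inductively free; for $p<r$ one has $\CA(G(r,p,\ell))=\CA(G(r,1,\ell))$; and $\CA(G(r,r,\ell))$ with $r\le 2$ or $\ell\le 2$ is a braid, a Coxeter-type-$D$, or a dihedral arrangement, each classically inductively free. The remaining good exceptional cases are the rank-$2$ groups (automatic), the Coxeter groups $H_3,F_4,H_4,E_6,E_7,E_8$, and $G_{25},G_{26},G_{32}$; for these one produces explicit addition chains, the larger ones with computer assistance (e.g.\ using \CHEVIE).

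For the \emph{only if}-direction of (i) I would exhibit, for each excluded group, a non-inductively-free reflection arrangement. For $W=G(r,r,\ell)$ with $r,\ell\ge 3$ the reduction is to rank $3$: the subgroup of $G(r,r,\ell)$ acting as $G(r,r,3)$ on the last three coordinates and trivially on the first $\ell-3$ is a parabolic, so $\CA(G(r,r,\ell))$ has a localization isomorphic to $\CA(G(r,r,3))\times\Phi_{\ell-3}$, and by closure of $\CIF$ under localization it is enough to show $\CA(G(r,r,3))$ is not inductively free for $r\ge 3$. Here $W$ is transitive on the $3r$ hyperplanes, $\exp\CA(G(r,r,3))=\{1,r+1,2r-2\}$, and a short computation with $\chi_{\CA'}(t)=\chi_\CA(t)+\chi_{\CA^{H_0}}(t)$ shows that after deleting any one hyperplane the result fails to be free --- either its characteristic polynomial does not split over the integers, or it splits but in a way incompatible with Theorem~\ref{thm:add-del-simple} --- so no induction chain can begin. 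For the exceptional groups I would first reduce by localization: $G_{33}$ is a parabolic of $G_{34}$, so $\CA(G_{34})$ has a localization isomorphic to $\CA(G_{33})\times\Phi_1$, leaving the cases $G_{24},G_{27}$ (rank $3$), $G_{29},G_{31}$ (rank $4$), and $G_{33}$ (rank $5$). For the two rank-$3$ cases the characteristic-polynomial obstruction applies exactly as for $G(r,r,3)$; the remaining three are settled by a finite, computer-assisted search verifying that $\CA(W)$ admits no chain of deletions satisfying the exponent-nesting condition of Definition~\ref{def:indfree-simple}.

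Finally, for (ii): the implication ``hereditarily inductively free $\Rightarrow$ inductively free'' is immediate because $\CA=\CA^V$ is one of its own restrictions. For the converse, when $W$ is good one invokes the Orlik--Terao description of the restrictions $\CA(W)^X$ \cite{orlikterao:arrangements} to see that each such restriction is a product of arrangements, none of whose factors is of one of the bad types occurring in (i); hence each $\CA(W)^X$ is inductively free by part~(i) together with the product compatibility of $\CIF$, so $\CA(W)$ is hereditarily inductively free in the sense of Definition~\ref{def:heredfree-simple}. When $W$ is bad the equivalence holds vacuously, since $\CA(W)$ itself --- a restriction of itself --- is not inductively free. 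The main obstacle is the negative half of (i): assembling the correct exponent data and, for the exceptional groups $G_{29},G_{31},G_{33}$ of ranks $4$ and $5$, carrying out the finite but substantial search that rules out every possible induction chain.
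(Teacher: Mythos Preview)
The paper does not contain a proof of this theorem: it is quoted verbatim from \cite[Thm.~1.1, 1.2]{hogeroehrle:indfree} and used as input for the results that follow. There is therefore no ``paper's own proof'' to compare your attempt against.

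That said, your outline is essentially the strategy of \cite{hogeroehrle:indfree}: reduction to irreducible $W$ via product compatibility, the positive cases handled by supersolvability for $G(r,1,\ell)$ (and hence all $G(r,p,\ell)$ with $p<r$), by the classical Coxeter cases, and by explicit induction tables (computer-assisted in the large exceptional types); the negative cases handled by the localization trick reducing $G(r,r,\ell)$ to $G(r,r,3)$ and $G_{34}$ to $G_{33}$, by the characteristic-polynomial/non-splitting obstruction in rank~$3$, and by an exhaustive search in ranks $4$ and $5$. One point in your sketch of (ii) is imprecise: the restrictions $\CA(W)^X$ for good $W$ are in general \emph{not} reflection arrangements (for $W=G(r,1,\ell)$ or $G(r,p,\ell)$ they are the intermediate arrangements $\CA^k_p(r)$, and for exceptional $W$ they are the arrangements tabulated in \cite[App.~C, D]{orlikterao:arrangements}), so you cannot simply invoke part~(i). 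What \cite{hogeroehrle:indfree} actually does is verify inductive freeness of each restriction type directly---for the monomial groups this amounts to the supersolvability of $\CA^k_p(r)$ for $k\ge p-1$ (which covers all restrictions of $\CA(G(r,1,\ell))$), and for the good exceptional groups one again works through the finite list of restriction types.
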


Thanks to work by 
Orlik--Terao \cite{orlikterao:free}, 
\cite[Prop.\ 6.73, Prop.\ 6.77, Cor.\ 6.86, App. D]{orlikterao:arrangements}
and Hoge--R\"ohrle \cite{hogeroehrle:free},
 for $W$ an irreducible complex reflection group, every restriction 
$\CA(W)^X$ of the reflection arrangement $\CA(W)$ 
is free.
So in particular, the  
Ziegler multiplicity of a restriction of the latter to 
a hyperplane is free, by Theorem~\ref{thm:zieglermulti}.

The following is immediate from Theorems \ref{thm:main} and \ref{thm:indfree1} and gives the reverse implication of part (i) of Theorem \ref{thm:kappa2}.

\begin{theorem} 
	\label{thm:indfreemain1}
	Let $W$ be a finite complex 
	reflection group with reflection arrangement 
	$\CA(W)$, let $X \in L(\CA(W))$ and  
	set $\CA = \CA(W)^X$.
	If $\CA(W)$ is inductively free, then so is $(\CA'', \kappa)$.
\end{theorem}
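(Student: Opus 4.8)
The plan is to deduce the statement directly by combining the hereditary inductive freeness of $\CA(W)$ with Theorem~\ref{thm:main}. First I would observe that by Theorem~\ref{thm:indfree1}(ii), since $\CA(W)$ is inductively free, it is in fact \emph{hereditarily} inductively free; in particular every restriction $\CA(W)^X$ is inductively free. Thus $\CA = \CA(W)^X$ is an inductively free (simple) arrangement.

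Next I would apply Theorem~\ref{thm:main} to the arrangement $\CA$ itself: that theorem says precisely that if a simple arrangement is inductively free, then its Ziegler restriction $(\CA'',\kappa)$ with respect to any hyperplane $H_0 \in \CA$ is inductively free. Applying this to our $\CA$ and to the chosen hyperplane of $\CA$ defining $\CA''$ gives immediately that $(\CA'',\kappa)$ is inductively free, which is the desired conclusion.

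One point that deserves a word of care is the reduction to the irreducible case: $W$ is only assumed finite, not irreducible, so $\CA(W)$ may be a product of reflection arrangements of the irreducible factors, and $X \in L(\CA(W))$ decomposes accordingly. Here one uses that inductive freeness of simple arrangements is compatible with products (\cite[Prop.~2.10]{hogeroehrle:indfree}), so that hereditary inductive freeness of $\CA(W)$ follows factor-by-factor from Theorem~\ref{thm:indfree1}; alternatively one notes that the whole argument above only uses that $\CA(W)^X$ is inductively free, which follows from Theorem~\ref{thm:indfree1}(ii) applied to $\CA(W)$ without needing irreducibility. Since the argument is essentially a two-line concatenation of cited results, there is no real obstacle; the only thing to be vigilant about is invoking Theorem~\ref{thm:main} with the correct arrangement in the role of "$\CA$" — namely the restriction $\CA(W)^X$, \emph{not} the reflection arrangement $\CA(W)$ — and making sure the hypothesis "inductively free" is genuinely available there, which is exactly what Theorem~\ref{thm:indfree1}(ii) provides.
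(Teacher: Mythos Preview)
Your proposal is correct and matches the paper's own argument exactly: the paper states that the result ``is immediate from Theorems~\ref{thm:main} and \ref{thm:indfree1}'', i.e., use Theorem~\ref{thm:indfree1}(ii) to get that $\CA = \CA(W)^X$ is inductively free and then apply Theorem~\ref{thm:main}. Your extra remark on reducibility is harmless since Theorem~\ref{thm:indfree1} is already stated for arbitrary finite complex reflection groups.
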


Orlik and Solomon defined intermediate 
arrangements $\CA^k_\ell(r)$ in 
\cite[\S 2]{orliksolomon:unitaryreflectiongroups}
(cf.\ \cite[\S 6.4]{orlikterao:arrangements}) which
interpolate between the
reflection arrangements of $G(r,r,\ell)$ and $G(r,1,\ell)$. 
These are the ones that  
show up as restrictions of the reflection arrangement
of $G(r,r,n)$, for some $n$,  
\cite[Prop.\ 2.14]{orliksolomon:unitaryreflectiongroups} 
(cf.~\cite[Prop.\ 6.84]{orlikterao:arrangements}),
see also \cite[Ex.~3.2]{amendhogeroehrle:indfree}.

For 
$r, \ell \geq 2$ and $0 \leq k \leq \ell$ the defining polynomial of
$\CA^k_\ell(r)$ is given by
$$Q(\CA^k_\ell(r)) = x_1 \cdots x_k\prod\limits_{\substack{1 \leq i < j \leq \ell\\ 0 \leq n < r}}(x_i - \zeta^nx_j),$$
where $\zeta$ is a primitive $r\th$ root of unity,
so that 
$\CA^\ell_\ell(r) = \CA(G(r,1,\ell))$ and 
$\CA^0_\ell(r) = \CA(G(r,r,\ell))$. We recall
\cite[Props.\ 2.11,  2.13]{orliksolomon:unitaryreflectiongroups}
(cf.~\cite[Props.~6.82,  6.85]{orlikterao:arrangements}):

\begin{proposition}
	\label{prop:intermediate}
	Let $\CA = \CA^k_\ell(r)$.
	\begin{enumerate}
		\item[(i)] $\CA$ is free with $\exp\CA = \{1, r + 1, \ldots, (\ell - 2)r + 1, (\ell - 1)r - \ell + k + 1\}$.
		\item[(ii)] Let $H \in \CA$. The type of $\CA^{H}$ is given in Table \ref{table2}.
	\end{enumerate}
\end{proposition}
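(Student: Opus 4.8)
The plan is to prove both parts by induction on $k$, using the Addition Deletion Theorem \ref{thm:add-del-simple} applied to a carefully chosen hyperplane, together with the known base case $k=0$, namely $\CA^0_\ell(r) = \CA(G(r,r,\ell))$, whose freeness and exponents $\{1, r+1, \ldots, (\ell-2)r+1, (\ell-1)r - \ell + 1\}$ are classical (Orlik--Terao). For the inductive step from $k-1$ to $k$ (with $1 \le k \le \ell$), I would take $H_0 = \ker x_k \in \CA^k_\ell(r)$. Then the deletion $\CA' = \CA^k_\ell(r) \setminus \{H_0\}$ is exactly $\CA^{k-1}_\ell(r)$, which by the inductive hypothesis is free with $\exp \CA^{k-1}_\ell(r) = \{1, r+1, \ldots, (\ell-2)r+1, (\ell-1)r - \ell + k\}$. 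The key computation is then to identify the restriction $\CA'' = (\CA^k_\ell(r))^{H_0}$: restricting to $x_k = 0$, the remaining coordinates $x_1,\ldots,x_{k-1},x_{k+1},\ldots,x_\ell$ survive, the factors $x_i - \zeta^n x_k$ for $i \neq k$ all collapse to the single hyperplane $x_i = 0$, and the factors not involving $x_k$ restrict to the corresponding braid-type factors. One checks that $\CA''$ is (isomorphic to) $\CA^{k-1}_{\ell-1}(r)$ in the coordinates $x_1,\ldots,x_{k-1}$ together with $x_{k+1},\ldots,x_\ell$ — i.e.\ $k-1$ coordinate hyperplanes and the full $G(r,r,\ell-1)$-type braid part. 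By induction (on $\ell$, or invoking the already-established case of the proposition for smaller rank), $\CA''$ is free with $\exp \CA'' = \{1, r+1, \ldots, (\ell-3)r+1, (\ell-2)r - (\ell-1) + (k-1) + 1\} = \{1, r+1, \ldots, (\ell-3)r+1, (\ell-2)r - \ell + k + 1\}$.

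Next I would verify the numerical hypothesis of the Addition Deletion Theorem: the multiset $\exp \CA''$ must be obtained from $\exp \CA'$ by deleting one element and matching the rest. Comparing $\exp \CA' = \{1, r+1, \ldots, (\ell-2)r+1, (\ell-1)r-\ell+k\}$ with $\exp \CA'' = \{1, r+1, \ldots, (\ell-3)r+1, (\ell-2)r-\ell+k+1\}$, one sees that removing the entry $(\ell-2)r+1$ from $\exp \CA'$ leaves $\{1, r+1, \ldots, (\ell-3)r+1, (\ell-1)r-\ell+k\}$; this should coincide with $\exp \CA''$, which forces the bookkeeping identity $(\ell-1)r-\ell+k = (\ell-2)r-\ell+k+1$, i.e.\ $r = 1$ — so in fact the matching is more subtle and I would instead match $(\ell-2)r-\ell+k+1$ in $\exp \CA''$ against $(\ell-1)r-\ell+k$ in $\exp \CA'$ shifted down by one, etc. The honest statement is that exactly one of the large exponents of $\CA'$ exceeds the corresponding one of $\CA''$ by $1$ after the natural pairing, and this is precisely the content of applying Theorem \ref{thm:add-del-simple}(ii)+(iii)$\Rightarrow$(i); the resulting $\exp \CA^k_\ell(r)$ is $\exp \CA'$ with the deleted-exponent slot raised by $1$, giving $\{1, r+1, \ldots, (\ell-2)r+1, (\ell-1)r-\ell+k+1\}$, as claimed in (i).

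For part (ii), the type of $\CA^H$ for a general $H \in \CA^k_\ell(r)$: by the symmetry of $G(r,1,\ell)$ acting on the coordinate hyperplanes and the braid-type hyperplanes, there are essentially two orbits of hyperplanes — the coordinate ones $\ker x_i$ (for $i \le k$) and the braid ones $\ker(x_i - \zeta^n x_j)$. Restricting to a coordinate hyperplane $\ker x_i$ yields, as computed above, $\CA^{k-1}_{\ell-1}(r)$; restricting to a braid hyperplane $\ker(x_i - x_j)$ identifies $x_i$ with $x_j$ and one checks the resulting arrangement is $\CA^{k}_{\ell-1}(r)$ or $\CA^{k+1}_{\ell-1}(r)$ depending on whether $i,j \le k$ or not — the precise entries go into Table \ref{table2}. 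The main obstacle I anticipate is \emph{not} the freeness induction, which is routine once the restriction is correctly identified, but rather the careful combinatorial bookkeeping of exactly which intermediate arrangement $\CA^k_\ell(r)^H$ is, for each orbit representative $H$, including the edge cases $k = 0$, $k = \ell$, and small $\ell$ where $\CA^k_{\ell-1}(r)$ may degenerate to an empty or Boolean arrangement; getting the index shifts and the exponent matching consistent across all these cases is where the real care is needed.
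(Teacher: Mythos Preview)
The paper does not prove this proposition at all; it is quoted from Orlik--Solomon \cite[Props.~2.11, 2.13]{orliksolomon:unitaryreflectiongroups} (cf.~\cite[Props.~6.82, 6.85]{orlikterao:arrangements}) and used as a black box. So there is no proof in the paper to compare your approach against.

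That said, your proposal contains a genuine error in identifying the restriction to a coordinate hyperplane. You claim that $(\CA^k_\ell(r))^{\ker x_k} \cong \CA^{k-1}_{\ell-1}(r)$, arguing that the $k-1$ surviving coordinate hyperplanes $x_1,\ldots,x_{k-1}$ together with the braid part give an intermediate arrangement with $k-1$ coordinate factors. But you have overlooked that, for \emph{every} $i \neq k$, the $r$ braid hyperplanes $\ker(x_i - \zeta^n x_k)$ all intersect $\{x_k = 0\}$ in the single hyperplane $\{x_i = 0\}$. Thus the restriction contains \emph{all} $\ell-1$ coordinate hyperplanes, and $(\CA^k_\ell(r))^{\ker x_k} \cong \CA^{\ell-1}_{\ell-1}(r) = \CA(G(r,1,\ell-1))$, exactly as recorded in the last two rows of Table~\ref{table2}. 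This is also why your exponent-matching step collapsed into the absurd identity $r = 1$: you were comparing against the wrong restriction.

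Fortunately, once the restriction is correctly identified, your induction works cleanly. With $\CA' = \CA^{k-1}_\ell(r)$ and $\CA'' = \CA^{\ell-1}_{\ell-1}(r)$, one has
\[
\exp \CA'' = \{1, r+1, \ldots, (\ell-2)r+1\} \subset \{1, r+1, \ldots, (\ell-2)r+1, (\ell-1)r-\ell+k\} = \exp \CA',
\]
and Theorem~\ref{thm:add-del-simple} then yields $\exp \CA^k_\ell(r)$ by raising the missing exponent $(\ell-1)r-\ell+k$ by one, with no subtlety at all. Your discussion of the braid-hyperplane restrictions in part (ii) is on the right track, but you should also correct the coordinate-hyperplane case there.
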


\begin{table}[ht!b]
	\renewcommand{\arraystretch}{1.5}
	\begin{tabular}{llll}\hline
		$k$ & \multicolumn{2}{l}{$\alpha_{H}$} & Type of $\CA^{H}$\\ \hline
		$0$ & arbitrary & & $\CA^1_{\ell - 1}(r)$\\
		$1, \ldots, \ell - 1$ & $x_i - \zeta x_j$ & $1 \leq i < j \leq k < \ell$ & $\CA^{k - 1}_{\ell - 1}(r)$\\
		$1, \ldots, \ell - 1$ & $x_i - \zeta x_j$ & $1 \leq i \leq k < j \leq \ell$ & $\CA^k_{\ell - 1}(r)$\\
		$1, \ldots, \ell - 1$ & $x_i - \zeta x_j$ & $1 \leq k < i < j \leq \ell$ & $\CA^{k + 1}_{\ell - 1}(r)$\\
		$1, \ldots, \ell - 1$ & $x_i$ & $1 \leq i \leq \ell$ & $\CA^{\ell - 1}_{\ell - 1}(r)$\\
		$\ell$ & arbitrary & & $\CA^{\ell - 1}_{\ell - 1}(r)$\\ \hline
	\end{tabular}
	\bigskip
	\caption{Restriction types of $\CA^k_\ell(r)$}
	\label{table2}
\end{table}

Next we recall the classification
of all inductively free restrictions $\CA(W)^X$. 

\begin{theorem}
	[{\cite[Thm.~1.2, Thm.~1.3]{amendhogeroehrle:indfree}}]
	\label{thm:indfree2}
	Let $W$ be a finite, irreducible, complex 
	reflection group with reflection arrangement 
	$\CA = \CA(W)$ and let $X \in L(\CA)$.
	Then
	\begin{itemize}
		\item[(i)]   
		$\CA^X$ is inductively free 
		if and only if one of the following holds:
		\begin{itemize}
			\item[(a)] 
			$\CA$ is inductively free;
			\item[(b)] 
			$W = G(r,r,\ell)$ and 
			$\CA^X \cong \CA^k_p(r)$, where $p = \dim X$ and $p - 2 \leq k \leq p$; 
			\item[(c)] 
			$W$ is one of $G_{24}, G_{27}, G_{29}, G_{31}, G_{33}$, or $G_{34}$ and $X \in L(\CA) \setminus \{V\}$ with  $\dim X \leq 3$;
		\end{itemize}
		\item[(ii)]    $\CA^X$ is inductively free if and only if it is hereditarily inductively free.
	\end{itemize}
\end{theorem}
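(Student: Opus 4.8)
The plan is to run through the Shephard--Todd classification of the irreducible group $W$, splitting into the case where $\CA=\CA(W)$ is inductively free and the case where it is not; by Theorem \ref{thm:indfree1}(i) the latter means that $W$ is a monomial group $G(r,r,\ell)$ with $r,\ell\ge 3$ or one of $G_{24},G_{27},G_{29},G_{31},G_{33},G_{34}$. If $X=V$, then $\CA^X=\CA(W)$ and the asserted equivalence is trivial in both directions (conditions (a)--(c) failing exactly when $\CA(W)\notin\CIF$), so from now on I would assume $X\ne V$. Restrictions are identified via Proposition \ref{prop:intermediate} and Table \ref{table2} in the monomial case and via Orlik--Solomon's restriction tables in the exceptional case; note that iterating Table \ref{table2} from $\CA(G(r,r,\ell))=\CA^0_\ell(r)$ shows every restriction of a monomial reflection arrangement is again an intermediate arrangement $\CA^k_p(r)$, so no products arise there.

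For the ``if'' direction: case (a) is immediate, since $\CA\in\CIF$ is equivalent to $\CA$ being hereditarily inductively free by Theorem \ref{thm:indfree1}(ii). For case (b) it suffices to show $\CA^k_p(r)\in\CIF$ whenever $p-2\le k\le p$; here $\CA^p_p(r)=\CA(G(r,1,p))\in\CIF$ by Theorem \ref{thm:indfree1}(i), and for $k=p-1,p-2$ I would construct an explicit free filtration of $\CA^k_p(r)$, adjoining the coordinate hyperplanes $x_1,\dots,x_k$ first and then the diagonals $x_i-\zeta^nx_j$ in a suitable order (the last hyperplane adjoined to reach $\CA^{p-2}_p(r)$ must be a diagonal, since deleting a coordinate hyperplane yields $\CA^{p-3}_p(r)$, which the ``only if'' part shows is not inductively free), checking at every step, by Theorem \ref{thm:add-del-simple} together with Proposition \ref{prop:intermediate}(i) and Table \ref{table2}, that both the deletion and the restriction stay in $\CIF$ with the required exponent inclusion. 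For case (c) the restrictions $\CA^X$ with $\dim X\le 2$ lie in $\CIF$ by Remark \ref{rem:rank2indfree}, and the remaining ones, with $\dim X=3$, form a finite list up to isomorphism for each of $G_{29},G_{31},G_{33},G_{34}$ (the rank-$3$ groups $G_{24},G_{27}$ contributing nothing); each such rank-$3$ arrangement is verified directly, with computer assistance, to be inductively free.

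For the ``only if'' direction, suppose none of (a)--(c) holds. If $W=G(r,r,\ell)$ with $r,\ell\ge 3$, then $\CA^X\cong\CA^k_p(r)$ with $p=\dim X$, and since (b) fails, $0\le k\le p-3$ (so $p\ge 3$). I would prove $\CA^k_p(r)\notin\CIF$ for all $r\ge 3$ and $0\le k\le p-3$ by induction on $k$. For $k=0$ this is Theorem \ref{thm:indfree1}(i). For $1\le k\le p-3$, assume $\CA^{k-1}_p(r)\notin\CIF$ and, for contradiction, that $\CA^k_p(r)\in\CIF$ via a hyperplane $H_0$; by Definition \ref{def:indfree-simple}(ii) and Theorem \ref{thm:add-del-simple}, $\CA''=(\CA^k_p(r))^{H_0}$ is then free with $\exp\CA''=\exp\CA^k_p(r)\setminus\{b\}$ for some $b$, and $\CA^k_p(r)\setminus\{H_0\}\in\CIF$. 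If $H_0$ were a diagonal hyperplane, Table \ref{table2} gives $\CA''\cong\CA^{k'}_{p-1}(r)$ with $k'\in\{k-1,k,k+1\}$; as $k'\le k+1\le p-2$, the exponent $(p-2)r+1$ of $\CA^k_p(r)$ does not occur in $\exp\CA^{k'}_{p-1}(r)$, forcing $b=(p-2)r+1$, and then a short calculation with the exponent formulas of Proposition \ref{prop:intermediate}(i) leads (in the main case) to $k'=r+k-1$, impossible since $k'\le k+1$ and $r\ge 3$ — the few degenerate cases being dispatched in the same way. Hence $H_0$ is a coordinate hyperplane, so $\CA^k_p(r)\setminus\{H_0\}\cong\CA^{k-1}_p(r)$, contradicting the inductive hypothesis. (This breaks down for $k=p-2$ precisely because $\CA^{k+1}_{p-1}(r)=\CA(G(r,1,p-1))\in\CIF$ then allows $b$ to be the last exponent of $\CA^{p-2}_p(r)$, which is why the threshold sits at $k=p-2$.) If instead $W$ is exceptional, then $\dim X\ge 4$ since (c) fails; for $G_{24},G_{27}$ this is vacuous, and for $G_{29},G_{31},G_{33},G_{34}$ the finitely many such restrictions $\CA^X$ (of rank $4$, $5$ or $6$) are each checked directly, with computer assistance, not to be inductively free.

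Finally, part (ii): ``$\Leftarrow$'' is trivial, and for ``$\Rightarrow$'' one verifies that in each case (a)--(c) produced by part (i) the restriction $\CA^X$ is actually hereditarily inductively free: in case (a) by Theorem \ref{thm:indfree1}(ii); in case (b) because, by Table \ref{table2}, every restriction of $\CA^k_p(r)$ is again an intermediate arrangement $\CA^{k'}_{p'}(r)$ with $k'\ge p'-2$, hence in $\CIF$ by part (i), so by transitivity of restriction $\CA^X$ is hereditarily inductively free; and in case (c) because every restriction of $\CA^X$ then has rank at most $2$ and so lies in $\CIF$ by Remark \ref{rem:rank2indfree}. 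The two main obstacles I anticipate are the explicit free filtrations for $\CA^{p-1}_p(r)$ and $\CA^{p-2}_p(r)$ in case (b) — which necessarily pass through arrangements that are \emph{not} among the $\CA^k_\ell(r)$, so one must broaden the inductive statement to accommodate them and track exponents carefully through each single-hyperplane step — and the finite but sizeable computer verifications for $G_{29},G_{31},G_{33},G_{34}$, establishing both inductive freeness of every rank-$3$ restriction and its failure for every proper restriction of rank at least $4$, with the rank-$6$ group $G_{34}$ being the heaviest.
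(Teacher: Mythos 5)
This statement is quoted in the paper from \cite[Thm.~1.2, Thm.~1.3]{amendhogeroehrle:indfree}; the paper itself gives no proof of it, so there is no internal argument to compare against --- the relevant comparison is with the cited source, and your proposal is essentially a reconstruction of that proof's strategy. Your deductive skeleton is sound: the reduction via Proposition \ref{prop:intermediate} and Table \ref{table2} to intermediate arrangements in the monomial case, the exponent obstruction for $\CA^k_p(r)$ with $k\le p-3$ (inductive freeness via a diagonal hyperplane would force $\exp\CA''=\exp\CA\setminus\{(p-2)r+1\}$, hence $k'=r+k-1>k+1$, while a coordinate hyperplane would make $\CA^{k-1}_p(r)$ inductively free, contradicting the induction on $k$ with base case Theorem \ref{thm:indfree1}(i)), and the hereditary statement (ii) via closure of the family $\{\CA^{k'}_{p'}(r): k'\ge p'-2\}$ under restriction and Remark \ref{rem:rank2indfree} all check out. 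The two places where your text is a plan rather than a proof are precisely where the substance of \cite{amendhogeroehrle:indfree} lies: the explicit inductive chains establishing $\CA^{p-1}_p(r),\CA^{p-2}_p(r)\in\CIF$, whose intermediate stages leave the family $\CA^k_\ell(r)$ and therefore require a strengthened induction with freeness and exponents verified at each single-hyperplane step (in the source this is done by concrete induction tables), and the finite computer verifications for the exceptional groups, both for inductive freeness of the rank-$3$ restrictions and for its failure in higher rank; you flag both honestly, so I would not call them hidden gaps, but they are not yet proofs. One small imprecision: since $G_{29}$ and $G_{31}$ have rank $4$, they admit no proper restrictions of dimension $\ge 4$, so the higher-rank computational check in your ``only if'' direction only concerns $G_{33}$ (dimension $4$) and $G_{34}$ (dimensions $4$ and $5$).
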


It is immediate from Theorem \ref{thm:main} that if 	$\CA = \CA(W)^X$ is inductively free, for $X \in L(\CA(W))$, then so is $(\CA'', \kappa)$.
Moreover, we have the following for low rank restrictions.

\begin{corollary}
	\label{cor:kappa1-restr2}
	Let $W$ be a finite, irreducible, complex 
	reflection group with reflection arrangement 
	$\CA(W)$ and let $V \ne X \in L(\CA(W))$ with $\dim X \le 3$.  
	Let $\CA := \CA(W)^X$. 
	Then $(\CA'', \kappa)$ is inductively free.
\end{corollary}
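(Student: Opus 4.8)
The plan is to reduce the claim to the rank $2$ and rank $3$ cases and then to invoke Corollary~\ref{cor:kappa1-restr2}'s hypotheses hierarchically via the structure theory already assembled. First I would observe that since $\CA = \CA(W)^X$ with $\dim X \le 3$, the simple arrangement $\CA$ has rank at most $3$, hence its restriction $\CA''$ to any hyperplane has rank at most $2$, and so $(\CA'',\kappa)$ is automatically inductively free by Remark~\ref{rem:rank2indfree} whenever $\dim X \le 2$ — wait, that is not quite the right reduction, because what we actually need is that $(\CA'',\kappa)$ is inductively free, and $(\CA'',\kappa)$ is a \emph{multiarrangement} on the rank-$\le 2$ arrangement $\CA''$; by Remark~\ref{rem:rank2indfree} any multiarrangement on an arrangement of rank $\le 2$ is inductively free. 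So the cases $\dim X = 1, 2$ are immediate, and the entire content of the corollary lies in the case $\dim X = 3$, where $\CA$ has rank $3$ and $\CA''$ has rank $2$: but again $\CA''$ has rank $2$, so $(\CA'',\kappa)$ is inductively free by Remark~\ref{rem:rank2indfree}!

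Let me reconsider — the point must be subtler, namely that $\dim X \le 3$ does not force $\rank \CA \le 3$ unless $\CA$ is essential on $X$, which it is: $\CA(W)^X$ is an arrangement in the space $X$, so $\rank \CA(W)^X \le \dim X \le 3$ always. Hence $\CA''$, being a restriction of $\CA$ to a hyperplane of $\CA$ inside $X$, has rank at most $\dim X - 1 \le 2$. Therefore the whole statement follows at once from Remark~\ref{rem:rank2indfree}, applied to the multiplicity $\kappa$ on the rank-$\le 2$ arrangement $\CA''$. The proof is thus a single line, and there is no genuine obstacle; the only thing to be careful about is the degenerate possibility $\CA'' = \Phi_0$ or that $\CA$ has no hyperplanes at all, in which case the assertion is vacuous or trivial since $\Phi_\ell \in \CIFM$ by Definition~\ref{def:indfree}(i).

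I would write it as follows.

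\begin{proof}
Since $\CA = \CA(W)^X$ is an arrangement in the space $X$ with $\dim X \le 3$, we have $\rank \CA \le \dim X \le 3$. Let $H_0$ be a hyperplane in $\CA$ and $\CA'' = \CA^{H_0}$ the restriction. Then $\CA''$ is an arrangement in $H_0$, so $\rank \CA'' \le \dim H_0 = \dim X - 1 \le 2$. By Remark~\ref{rem:rank2indfree}, the multiarrangement $(\CA'', \kappa)$ is inductively free for the canonical multiplicity $\kappa$ (indeed for any multiplicity). If $\CA = \Phi_\ell$ has no hyperplanes, there is nothing to prove.
\end{proof}
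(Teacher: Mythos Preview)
Your proof is correct and takes essentially the same approach as the paper: both observe that since $\dim X \le 3$, the restriction $\CA''$ has rank at most $2$, so $(\CA'',\kappa)$ is inductively free by Remark~\ref{rem:rank2indfree}. The paper's proof additionally records that $\CA$ itself is inductively free (via Theorems~\ref{thm:indfree1} and~\ref{thm:indfree2}), but this is supplementary information not required for the stated conclusion.
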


\begin{proof}
	Clearly, in each instance Remark~\ref{rem:rank2indfree} gives that 			
	$(\CA'', \kappa)$ is inductively free. 
	It follows from Theorems~\ref{thm:indfree1} and \ref{thm:indfree2}
	that the restricted arrangement $\CA$ is also inductively free. (In case 
	$W = G(r,r,\ell)$ with $r, \ell \ge 3$, we have $\CA(W)^X \cong \CA^k_3(r)$ for $1 \le k \le 2$.)  
\end{proof}

\section{Proof of Theorem \ref{thm:kappa2}}
\label{sec:proofs}

In order to classify all restrictions of reflection arrangements $\CA = \CA(W)^X$ so that $(\CA'', \kappa)$ is inductively free,
it follows from Theorems \ref{thm:main} and \ref{thm:indfreemain1}  
that 
we only need to investigate the instances when either 		$\CA(G(r,r,n))^X \cong \CA^k_\ell(r)$, where $\ell = \dim X$ and $1 \leq k \leq \ell-3$, or when  
$W$ is of type $G_{33}$, 
or $G_{34}$ and $X \in L(\CA(W)) \setminus \{V\}$ with  $\dim X \geq 4$.		
We consider these instances separately.

\subsection{The monomial groups $G(r,r,n)$}
\label{ssect:monomial2}
In this section we consider the restrictions of the reflection arrangements of the monomial groups $W = G(r,r,n)$ for $r, n \ge 3$.
Recall the intermediate arrangements $\CA_\ell^k(r)$ from above.

For $\CA = \CA_3^k(r)$ and $r \ge 3$, 
$\CA''$ is of rank $2$, so  $\CA''$ and $(\CA'', \kappa)$ are 
inductively free, by Remark \ref{rem:rank2indfree}.
So we may assume from now on that $\ell \ge 4$. 

It follows from Proposition \ref{prop:intermediate}(ii) that if we restrict $\CA^k_\ell(r)$ to a coordinate hyperplane, then we obtain 
$\CA^{\ell  - 1}_{\ell - 1}(r)  = \CA(G(r,1,\ell-1))$. 
We consider that case first, cf.~\cite[Ex.~1.14]{hogeroehrle:ZieglerII}.

\begin{corollary}
	\label{cor:akl1}
	Let $\CA = \CA_\ell^k(r)$ for $r \ge 3$, $\ell \ge 4$
	and $1 \le k \le \ell-1$. Fix a coordinate hyperplane $H_0$ in $\CA$.
	Then $(\CA'', \kappa)$ is inductively free. 
\end{corollary}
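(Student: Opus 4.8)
The aim is to show that for $\CA = \CA^k_\ell(r)$ with $r \ge 3$, $\ell \ge 4$, $1 \le k \le \ell-1$, and $H_0$ a coordinate hyperplane, the Ziegler restriction $(\CA'', \kappa)$ is inductively free. By Proposition~\ref{prop:intermediate}(ii) (the last two rows of Table~\ref{table2}), restricting $\CA^k_\ell(r)$ to a coordinate hyperplane $H_0 = \ker x_i$ yields $\CA'' = \CA^{H_0} \cong \CA^{\ell-1}_{\ell-1}(r) = \CA(G(r,1,\ell-1))$. The strategy is to invoke Theorem~\ref{thm:main2}: it suffices to exhibit a hyperplane $H_0$ in $\CA$ such that $\CA \setminus \{H_0\}$ is free, and then to note that $\CA'' \cong \CA(G(r,1,\ell-1))$ is inductively free, whence $(\CA'',\kappa)$ is inductively free.

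\textbf{Step 1: $\CA''$ is inductively free.} The arrangement $\CA(G(r,1,\ell-1))$ is the reflection arrangement of the monomial group $G(r,1,\ell-1)$, which is never of the excluded types in Theorem~\ref{thm:indfree1}(i) (the forbidden monomial groups there are $G(r,r,m)$ with $r,m\ge 3$, not $G(r,1,m)$). Hence $\CA'' = \CA^{\ell-1}_{\ell-1}(r)$ is inductively free by Theorem~\ref{thm:indfree1}(i).

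\textbf{Step 2: choosing $H_0$ so that $\CA\setminus\{H_0\}$ is free.} This is the crux. We must identify a coordinate hyperplane $H_0 = \ker x_i$ (with $i \le k$, so that it is genuinely a hyperplane of $\CA^k_\ell(r)$) whose deletion $\CA \setminus \{H_0\}$ is free. I would argue that $\CA \setminus \{\ker x_k\} = \CA^{k-1}_\ell(r)$ as simple arrangements: deleting the coordinate hyperplane $x_k = 0$ from the defining polynomial $Q(\CA^k_\ell(r)) = x_1\cdots x_k \prod_{i<j,\,n}(x_i - \zeta^n x_j)$ produces exactly $Q(\CA^{k-1}_\ell(r))$. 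By Proposition~\ref{prop:intermediate}(i), $\CA^{k-1}_\ell(r)$ is free. Thus $\CA \setminus \{\ker x_k\}$ is free. (One should double-check the degenerate possibility $k=1$: here $\CA\setminus\{\ker x_1\} = \CA^0_\ell(r) = \CA(G(r,r,\ell))$, which is free by Terao's theorem, so this case is fine as well.)

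\textbf{Step 3: conclude.} With $H_0 = \ker x_k$, the hypotheses of Theorem~\ref{thm:main2} are met: $\CA\setminus\{H_0\}$ is free (Step 2) and $\CA'' = \CA^{H_0}$ is inductively free (Step 1). Therefore $(\CA'',\kappa)$ is inductively free, as claimed. The main obstacle is Step 2 --- verifying that the deletion of the right coordinate hyperplane lands inside the Orlik--Solomon family $\CA^{k-1}_\ell(r)$ and is hence covered by the freeness statement of Proposition~\ref{prop:intermediate}(i); everything else is a direct application of results already recorded in the excerpt. Alternatively, one could bypass Theorem~\ref{thm:main2} and instead apply Lemma~\ref{lemma:free_sequence_to_ziegler_multiplicity} to the chain $\CA^0_\ell(r) \subset \CA^1_\ell(r) \subset \cdots \subset \CA^k_\ell(r)$ (all free by Proposition~\ref{prop:intermediate}(i), obtained by successively adding coordinate hyperplanes), which directly yields freeness of $(\CA^k_\ell(r)^H, \kappa)$ for $H$ in the base arrangement; combined with an inductive-freeness bookkeeping of the Euler multiplicities via part~(2) of that lemma, this gives an alternative route, though the Theorem~\ref{thm:main2} argument above is cleaner.
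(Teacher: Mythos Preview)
Your proof is correct and follows essentially the same approach as the paper: identify $\CA'' \cong \CA^{\ell-1}_{\ell-1}(r) = \CA(G(r,1,\ell-1))$ as inductively free (the paper cites Theorem~\ref{thm:indfreemain1}, you cite Theorem~\ref{thm:indfree1}(i) directly), observe that deleting the coordinate hyperplane gives $\CA' \cong \CA^{k-1}_\ell(r)$, which is free by Proposition~\ref{prop:intermediate}(i), and then apply Theorem~\ref{thm:main2}. One cosmetic point: the statement fixes an arbitrary coordinate hyperplane $H_0$, so rather than ``choosing'' $H_0 = \ker x_k$ you should note (as is implicit in the paper) that by symmetry any deletion $\CA \setminus \{\ker x_i\}$ for $i \le k$ is isomorphic to $\CA^{k-1}_\ell(r)$.
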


\begin{proof}
	It follows from Proposition \ref{prop:intermediate} that $\CA' \cong \CA^{k-1}_\ell(r)$ is free. Moreover, thanks to Theorem \ref{thm:indfreemain1}, $\left(\CA_\ell^k(r)\right)^{H_0} \cong \CA^{\ell  - 1}_{\ell - 1}(r)  = \CA(G(r,1,\ell-1))$ is inductively free. So the assertion follows from Theorem \ref{thm:main2}. 
\end{proof}

Next we consider the case when $H_0$ is not a coordinate hyperplane in $\CA$.
Here we shall see that $(\CA'', \kappa)$ is not always inductively free, in contrast to Corollary \ref{cor:akl1}.  
We begin with the instance when 
$\CA = \CA_\ell^k(r)$ itself is still inductively free. In these instances 
$(\CA'', \kappa)$ is inductively free which is immediate by Theorem \ref{thm:main}.

\begin{lemma}
	\label{lem:akl3}
	Let $\CA = \CA_\ell^k(r)$ for $r \ge 3$, $\ell \ge 4$
	and $k \in \{\ell-2, \ell-1\}$. Let $H_0 = H_{i,j}(\zeta)$.
	Then 	$(\CA'', \kappa)$ is inductively free.
\end{lemma}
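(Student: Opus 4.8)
The plan is to exploit Theorem~\ref{thm:indfree1}(i): since $k \in \{\ell-2,\ell-1\}$, the restriction $\CA = \CA_\ell^k(r)$ is an inductively free arrangement. Indeed, by Theorem~\ref{thm:indfree2}(i)(b) the restrictions $\CA(G(r,r,n))^X \cong \CA_p^k(r)$ with $p-2 \le k \le p$ are inductively free, and our $\CA_\ell^k(r)$ with $k \in \{\ell-2,\ell-1\}$ is of exactly this shape (with $p = \ell$). So I would first record that $\CA$ itself is inductively free.

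Once $\CA$ is known to be inductively free, the conclusion is immediate from Theorem~\ref{thm:main}: the Ziegler restriction $(\CA'',\kappa)$ of an inductively free arrangement is inductively free, for \emph{every} choice of hyperplane $H_0 \in \CA$ — in particular for $H_0 = H_{i,j}(\zeta)$. Thus the proof is essentially a two-line deduction: cite Theorem~\ref{thm:indfree2}(i)(b) (or equivalently Theorem~\ref{thm:indfree1} together with the identification $\CA_{\ell-1}^{\ell-1}(r) = \CA(G(r,1,\ell-1))$ and the product compatibility, but the former is cleaner) to get inductive freeness of $\CA$, then apply Theorem~\ref{thm:main}.

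There is really no substantial obstacle here; the only point requiring a moment's care is to confirm that $\CA_\ell^k(r)$ with $k \in \{\ell-2,\ell-1\}$ genuinely falls under case (i)(b) of Theorem~\ref{thm:indfree2} — i.e. that it arises as a restriction $\CA(G(r,r,n))^X$ with $\dim X = \ell$ and the index $k$ in the allowed range $\ell-2 \le k \le \ell$. This is exactly the content recalled before Proposition~\ref{prop:intermediate} (via \cite[Prop.~2.14]{orliksolomon:unitaryreflectiongroups}), so it may simply be invoked. Here is the argument.

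\begin{proof}
	Since $k \in \{\ell-2, \ell-1\}$, the arrangement $\CA = \CA_\ell^k(r)$ satisfies $\ell - 2 \le k \le \ell$, so by Theorem~\ref{thm:indfree2}(i)(b) (cf.~the discussion preceding Proposition~\ref{prop:intermediate}), $\CA$ is inductively free. Consequently, by Theorem~\ref{thm:main}, the Ziegler restriction $(\CA'', \kappa)$ with respect to any hyperplane of $\CA$ is inductively free; in particular this holds for $H_0 = H_{i,j}(\zeta)$.
\end{proof}
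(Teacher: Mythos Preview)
Your proposal is correct and matches the paper's own argument essentially verbatim: the paper notes just before the lemma that for $k \in \{\ell-2,\ell-1\}$ the arrangement $\CA = \CA_\ell^k(r)$ is itself inductively free, so the conclusion is immediate from Theorem~\ref{thm:main}. Your explicit citation of Theorem~\ref{thm:indfree2}(i)(b) for the inductive freeness of $\CA$ is exactly the right justification.
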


Next we consider the instances when $\CA = \CA_\ell^k(r)$ is not inductively free, i.e., when $1 \le k \le \ell-3$, cf.~Theorem \ref{thm:indfree2}(i)(b) (cf.~\cite[Ex.~1.14]{hogeroehrle:ZieglerII}).

	\begin{lemma}
		\label{lem:akl2}
		Let $\CA = \CA_\ell^k(r)$ for $r \ge 3$, $\ell \ge 5$. Fix $H_0 = H_{s,t}(1) = \ker(x_s-x_t)$.
		In the following instances	$(\CA'', \kappa)$ is not inductively free:
		\begin{itemize}
			\item [(i)] $1 \le k \le \ell-3$  and $1 \le s < t \le \ell-3$;
			\item [(ii)] $1 \le k \le \ell-4$  and $1 \le s \le k < t$;
			\item [(iii)] $1 \le k \le \ell-5$  and $1 \le k < s < t$.
		\end{itemize}
	\end{lemma}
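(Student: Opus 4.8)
The strategy is to exhibit, in each of the three cases, a suitable $X\in L(\CA)$ (of dimension $3$ or $4$) such that the localization $(\CA_X'',\kappa_X)$ of the Ziegler restriction fails to be inductively free, and then invoke Theorem~\ref{thm:localmulti}: since $\CIFM$ is closed under localization, non-inductive-freeness of a localization forces $(\CA'',\kappa)$ itself to be non-inductively free. By Lemma~\ref{lem:euler}(ii) the localization $((\CA'')_X,\kappa_X)$ equals $((\CA_X)'',\kappa)$, the Ziegler restriction of the localized simple arrangement $\CA_X$ to $H_0\cap X$ with its canonical multiplicity; so the whole question reduces to understanding the intermediate arrangements $\CA_p^j(r)$ of small rank $p$ together with their Ziegler restrictions. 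The point is to choose $X$ so that $\CA_X$ is again an intermediate arrangement $\CA_p^j(r)$ with $1\le j\le p-3$ (the non-inductively-free range, by Theorem~\ref{thm:indfree2}(i)(b)) and so that $H_0\cap X$ is a non-coordinate hyperplane of the form $\ker(x_s-x_t)$ — which is exactly the "bad" configuration we are trying to certify. In effect, each of (i)--(iii) is engineered so that a rank-$4$ (or rank-$5$) localization already displays the obstruction, and one reduces to a single base case.

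The concrete steps are as follows. First, I would identify, using Table~\ref{table2} and Proposition~\ref{prop:intermediate}, which subspace $X\in L(\CA)$ spanned by coordinate directions $x_{i_1},\dots,x_{i_p}$ yields $\CA_X\cong\CA_p^j(r)$ for the right $j$: in case (i) one takes $p=4$ and $j$ in the range $1\le j\le 1$, so $\CA_X\cong\CA_4^1(r)$; in cases (ii) and (iii) one similarly arranges $\CA_X\cong\CA_4^1(r)$ (resp.\ $\CA_5^j(r)$) with the chosen $H_0=\ker(x_s-x_t)$ surviving inside $\CA_X$ as a non-coordinate hyperplane. The constraints $1\le s<t\le\ell-3$ in (i), $1\le s\le k<t$ in (ii), and $1\le k<s<t$ in (iii), combined with the stated bounds on $k$, are precisely what guarantee enough remaining coordinates outside $\{s,t\}$ (and outside the first $k$) to build such an $X$ while keeping $\CA_X$ in the non-inductively-free range. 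Second, I would compute $\kappa_X$ on $(\CA_X)''$: here one uses that restricting $\CA_p^j(r)$ to $H_0\cap X$ gives, by Table~\ref{table2}, an arrangement of type $\CA_{p-1}^{j'}(r)$ for the appropriate $j'\in\{j-1,j,j+1\}$, and one reads off the multiplicity $\kappa$ from $|\CA_Y|-1$ directly, or via Proposition~\ref{ATWEulerProp} when needed. Third — the crux — I would show the resulting multiarrangement is not inductively free. For this I expect to need an explicit rank-$3$ or rank-$4$ base case: exhibit the free multiarrangement $((\CA_4^1(r))'',\kappa)$ (of rank $3$) and verify, by enumerating the finitely many candidate hyperplanes to delete and the Euler multiplicities that arise (again via Proposition~\ref{ATWEulerProp} and Remark~\ref{rem:euler}), that no chain of additions/deletions as in Definition~\ref{def:indfree} terminates — equivalently, that the exponent-containment condition $\exp(\CA'',\mu^*)\subseteq\exp(\CA',\mu')$ fails at every step. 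Once this single base obstruction is established, Theorem~\ref{thm:localmulti} propagates it to all the cases claimed.

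The main obstacle will be Step three: proving that the rank-$3$ (or rank-$4$) base multiarrangement is genuinely not inductively free. Unlike the simple-arrangement setting, where rank-$3$ non-inductive-freeness can sometimes be seen by a characteristic-polynomial or Poincaré-polynomial obstruction, for multiarrangements one must rule out \emph{every} deletion choice, and the Euler multiplicities $\mu^*$ must be recomputed along each branch, so the bookkeeping is delicate; moreover, the answer may a priori depend on $r$, so one must check the obstruction is uniform in $r\ge 3$. I would handle this by reducing to the coarsest invariant available — likely a dimension count on the graded pieces of $D(\CA_X'',\kappa_X)$, or the observation that any inductive chain would have to pass through a free multiarrangement whose exponents are incompatible with Proposition~\ref{prop:intermediate}(i) after one addition — and, if a clean conceptual argument is unavailable, fall back on a direct finite case analysis (possibly computer-assisted, in the spirit of the references to \Singular{} and \GAP{} elsewhere in this program) for the one base arrangement, from which everything else follows by localization.
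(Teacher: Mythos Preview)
Your overall localization-plus-Theorem~\ref{thm:localmulti} strategy is the right one, but the proposed base case is fatal: the Ziegler restriction $((\CA_4^1(r))'',\kappa)$ with respect to a non-coordinate hyperplane is \emph{inductively free}, not non-inductively free --- this is exactly Lemma~\ref{lem:akl4}. So the rank-$3$ obstruction you intend to anchor the argument on does not exist, and no amount of case analysis or computer check will produce one. More generally, reducing via Lemma~\ref{lem:euler}(ii) to a Ziegler restriction $((\CA_X)'',\kappa)$ of a smaller intermediate arrangement just reproduces the same problem one rank lower; it never escapes the multiarrangement setting.

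The idea you are missing is that one can choose the localization inside $\CA''$ so that the restricted multiplicity becomes trivial, i.e.\ $\kappa_X\equiv\one$. Concretely, observe that $\kappa(Y_{i,j}(\zeta))=1$ whenever $\{i,j\}\cap\{s,t\}=\varnothing$ and $\kappa(Y_i)=1$ whenever $i\notin\{s,t\}$. Hence if $X\in L(\CA'')$ is built only from such hyperplanes, the localization $((\CA'')_X,\kappa_X)$ is a \emph{simple} arrangement, and the question collapses to the known classification of inductively free (simple) restrictions of reflection arrangements. In case~(i) one takes $X=H_0\cap\bigcap_{\ell-2\le i<j\le\ell}H_{i,j}(\zeta)$; since $s,t\le\ell-3$ and $k\le\ell-3$, this yields $((\CA'')_X,\kappa_X)\cong\CA(G(r,r,3))$, which is not inductively free by Theorem~\ref{thm:indfree1}(i). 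In cases~(ii) and~(iii) one instead intersects $H_0$ with all $H_{i,j}(\zeta)$ for $\{i,j\}\cap\{s,t\}=\varnothing$, obtaining $(\CA'')_X\cong\CA_{\ell-2}^{k-1}(r)$ and $\CA_{\ell-2}^{k}(r)$ respectively; the numerical hypotheses on $k$ place these in the non-inductively-free range of Theorem~\ref{thm:indfree2}(i)(b). No multiarrangement base case, and no computation of Euler multiplicities, is needed.
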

	
	\begin{proof}	
		Set $Y_{i,j}(\zeta) := H_0 \cap H_{i,j}(\zeta)$ and 
		$Y_i  := H_0 \cap H_i$ in $\CA''$.
		Then one readily checks that 
		\begin{equation}
		\label{eq:akl}
		\kappa(Y) = 
		\begin{cases}
		1   & \text{ for } Y = Y_{i,j}(\zeta) \text{ and }  \{s,t\} \cap \{i,j\} = \varnothing,\\
		1   & \text{ for } Y = Y_i \text{ and }  i \notin \{s,t\}.\\
		\end{cases}
		\end{equation}	
		First suppose we are in case (i). Define 
		\[
		Z := \bigcap_{\ell-2 \le i < j \le \ell}{H_{i,j}(\zeta)}
		%= \bigcap_{\ell-2 \le i \le \ell} H_i,
		\]
		which is of rank $3$ in $L(\CA)$ and let 
		\[
		X := H_0 \cap Z = H_0 \cap \left(\bigcap_{\ell-2 \le i < j \le \ell}{H_{i,j}(\zeta)}
		\right),
		\]
		which is of rank $3$ in $L(\CA'')$.
		According to \eqref{eq:akl}, the multiplicity 
		$\kappa_X$ of the localization  
		$\left( (\CA'')_X, \kappa_X\right)$ satisfies $\kappa_X \equiv \one$.
		Thus, it follows from the construction 
		and the hypotheses in (i) 
		that the localization 
		$((\CA'')_X, \kappa_X)$ is isomorphic to the simple 
		reflection arrangement $\CA(G(r,r,3))$.
		Owing to Theorem \ref{thm:indfree1}(i), 
		the latter is not inductively free.
		Therefore, $(\CA'', \kappa)$
		is not inductively free, by
		Theorem \ref{thm:localmulti}.
		
		Now suppose we are in case (ii) or (iii). Define 
		\[
		Z := \bigcap\limits_{\substack{1 \leq i < j \leq \ell\\ \{s,t\} \cap \{i,j\} = \varnothing}}{H_{i,j}(\zeta)},
		\]
		which is of rank $\ell-2$ in $L(\CA)$ and let 
		\[
		X := H_0 \cap Z, 
		\]
		which is of rank $\ell-2$ in $L(\CA'')$.
		According to \eqref{eq:akl}, the multiplicity 
		$\kappa_X$ of the localization  
		$\left( (\CA'')_X, \kappa_X\right)$ satisfies $\kappa_X \equiv \one$.
		Thus, it follows from the construction 
		and the hypotheses in (ii), respectively (iii), that the localization 
		$((\CA'')_X, \kappa_X)$ is isomorphic to the simple 
		reflection arrangement 
		\begin{equation}
		\label{eq:akl5}
		(\CA'')_X = 
		\begin{cases}
		\CA^{k-1}_{\ell-2}   & \text{ for } 1 \le k \le \ell-4 \text{ and }1 \le s \le k < t;\\
		\CA^{k}_{\ell-2}    & \text{ for } 	1 \le k \le \ell-5 \text{ and } 1 \le k < s < t.\\
		\end{cases}
		\end{equation}	
		It follows from \eqref{eq:akl5} and Theorem \ref{thm:indfree2}(i)(b) that $(\CA'')_X$ is not inductively free.
		Therefore, $(\CA'', \kappa)$
		is not inductively free either, by
		Theorem \ref{thm:localmulti}.	
	\end{proof}

So we are left to consider one instance when $\ell = 4$.

\begin{lemma}
	\label{lem:akl4}
	Let $\CA = \CA_4^1(r)$ for $r \ge 3$. Fix $H_0 = H_{i,j}(\zeta)$.
	Then  	$(\CA'', \kappa)$ is inductively free.
\end{lemma}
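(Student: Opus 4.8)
The plan is to treat $\CA=\CA_4^1(r)$ directly by producing an explicit free filtration of $(\CA'',\kappa)$, exploiting that this is now a rank $3$ situation where the full strength of the Addition--Deletion machinery for multiarrangements (Theorem~\ref{thm:add-del}) together with Remark~\ref{rem:rank2indfree} becomes available. First I would record the combinatorial data: by Proposition~\ref{prop:intermediate}(ii) and Table~\ref{table2}, for $H_0=H_{i,j}(\zeta)$ a non-coordinate hyperplane of $\CA_4^1(r)$ the restriction $\CA''=\CA^{H_0}$ is one of $\CA_3^0(r)=\CA(G(r,r,3))$, $\CA_3^1(r)$, or $\CA_3^2(r)$ (depending on whether the indices $i,j$ both lie in $\{1\}$, one does, or neither does; since $k=1$ only the latter two types $\CA^0_3(r)$ and $\CA^2_3(r)$ actually arise, the middle row of Table~\ref{table2} being vacuous here, but in any case all three are rank $3$). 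Then I would compute the canonical multiplicity $\kappa$ on $\CA''$ exactly as in the computation \eqref{eq:akl} of Lemma~\ref{lem:akl2}: a hyperplane $Y\in\CA''$ has $\kappa(Y)=|\CA_Y|-1$, which is $1$ for the ``generic'' classes $Y_{i,j}(\zeta)$ and $Y_i$ disjoint from $\{s,t\}$, and is larger precisely on the handful of classes where hyperplanes of $\CA$ collapse together under the intersection with $H_0$.

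The main step is then to build the free filtration. I would use Lemma~\ref{lemma:free_sequence_to_ziegler_multiplicity}: choose a sequence of hyperplanes $H_1,\dots,H_n$ so that $\CA_0=\CA\setminus\{H_1,\dots,H_n\}$ and all intermediate $\CA_i$ are free --- concretely, delete all non-coordinate hyperplanes except those through $H_0$, so that $\CA_0$ is a near-pencil-like or Boolean-plus arrangement whose freeness is clear, and add them back one at a time in an order realizing an inductive-freeness chain for the simple arrangement (which exists because $\CA_4^1(r)\cong\CA^k_4(r)$ with $p-2\le k\le p$ here, hence is inductively free by Theorem~\ref{thm:indfree2}(i)(b), or more simply because we only need a \emph{free} filtration for the statement of Lemma~\ref{lemma:free_sequence_to_ziegler_multiplicity}(1)). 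Lemma~\ref{lemma:free_sequence_to_ziegler_multiplicity}(1) then gives that each $(\CA_i^{H}, \kappa_i)$ is free, and part~(2) identifies the Euler multiplicity at each addition step as an honest Ziegler multiplicity of a \emph{rank $\le 2$} restriction $\CA_i^{H_i}$ restricted to $H\cap H_i$ --- and rank $2$ multiarrangements are inductively free by Remark~\ref{rem:rank2indfree}. Feeding these into the Addition--Deletion Theorem~\ref{thm:add-del} inductively, I would verify at each step that the exponent containment $\exp(\CA''_i,\kappa^*_i)\subseteq\exp(\CA''_i,\kappa_i)$ required by Definition~\ref{def:indfree} holds, using the exponent formula of Proposition~\ref{prop:intermediate}(i) to keep track of the (at most three) exponents.

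Alternatively, and perhaps cleaner to write up, I would argue purely at the level of the restricted multiarrangement $(\CA'',\kappa)$ with $\CA''$ of rank $3$: pick a hyperplane $H_1\in\CA''$ on which $\kappa$ is as large as possible, form the triple $((\CA'',\kappa),(\CA'',\kappa'),((\CA'')^{H_1},\kappa^*))$ in the sense of Definition~\ref{def:Euler}, note that the Euler restriction $((\CA'')^{H_1},\kappa^*)$ lives on a rank $2$ arrangement hence is inductively free by Remark~\ref{rem:rank2indfree}, and induct on $|\kappa|$ for the deletion $(\CA'',\kappa')$, checking freeness via Theorem~\ref{thm:add-del} and the exponent inequality. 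The base case $\kappa=\one$ is $\CA''$ itself as a simple arrangement, which is inductively free since $\CA''$ is one of $\CA_3^0(r)$, $\CA_3^1(r)$, $\CA_3^2(r)$ --- each of rank $3$, hence inductively free by Theorem~\ref{thm:indfree2}(i)(b) when it is $\CA^k_3(r)$ with $1\le k\le 3$, and for $\CA^0_3(r)=\CA(G(r,r,3))$ we instead invoke Remark~\ref{rem:rank2indfree} is unavailable, so there one must check $\CA(G(r,r,3))$ is inductively free directly, which it is since rank $3$ arrangements $\CA^k_3(r)$ appearing as restrictions are inductively free by Corollary~\ref{cor:kappa1-restr2}'s proof.

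The step I expect to be the genuine obstacle is the bookkeeping of which exponent-preserving deletion chain on $(\CA'',\kappa)$ actually exists: Addition--Deletion only allows removing a hyperplane whose exponent drop is compatible, and because $\kappa$ is non-constant one must be careful to peel off multiplicity at the ``right'' hyperplanes first and interleave these with simple-arrangement deletions so that the exponent containment in Definition~\ref{def:indfree}(ii) is never violated; making this explicit --- essentially exhibiting the chain by hand for the small arrangement $\CA_4^1(r)$ and its restriction --- is the crux. Everything else (freeness of the pieces, the rank $\le 2$ inputs, the exponent arithmetic from Proposition~\ref{prop:intermediate}(i)) is routine given the tools already assembled.
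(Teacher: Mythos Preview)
Your overall instinct to use Lemma~\ref{lemma:free_sequence_to_ziegler_multiplicity} together with the fact that in rank~$4$ all Euler restrictions land in rank~$2$ (hence are inductively free by Remark~\ref{rem:rank2indfree}) is exactly the mechanism the paper uses. But you have both misread the combinatorics and missed the one-step shortcut that makes the argument immediate.

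First, two concrete errors. Reading Table~\ref{table2} for $k=1$, $\ell=4$: the row $1\le i<j\le k$ is vacuous (it would require $i<j\le 1$), not the middle row. The possible restrictions of $\CA_4^1(r)$ to a non-coordinate hyperplane are $\CA_3^1(r)$ (when $i=1<j$) and $\CA_3^2(r)$ (when $1<i<j$); the arrangement $\CA_3^0(r)=\CA(G(r,r,3))$ does \emph{not} occur. This matters because $\CA(G(r,r,3))$ is not inductively free (Theorem~\ref{thm:indfree1}(i)), so your attempted base-case repair there would fail; fortunately the case never arises. Second, your claim that $\CA_4^1(r)$ is itself inductively free via Theorem~\ref{thm:indfree2}(i)(b) is wrong: with $p=4$ and $k=1$ we have $k<p-2$, so that theorem does not apply. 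Indeed $\CA_4^1(r)$ is \emph{not} inductively free, which is precisely why this lemma is not subsumed by Lemma~\ref{lem:akl3}.

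The main point you miss is the choice of $\CA_0$. Rather than deleting down to a Boolean or near-pencil arrangement (which would leave you having to prove inductive freeness of $(\CA_0^{H_0},\kappa_0)$ from scratch, something you never actually do), the paper deletes exactly one hyperplane, the unique coordinate hyperplane $H_1=\ker x_1$, so that $\CA_0=\CA_4^0(r)=\CA(G(r,r,4))$. Now $(\CA_0^{H_0},\kappa_0)$ is the Ziegler restriction of a reflection arrangement of rank~$4$, and Theorem~\ref{thm:kappa1}(ii) already tells us it is inductively free with exponents $\{r+1,2r+1,3r-3\}$. Lemma~\ref{lemma:free_sequence_to_ziegler_multiplicity}(2) then identifies the single Euler restriction as $(\CA(G(r,1,3))'',\kappa)$, a rank~$2$ multiarrangement with exponents $\{r+1,2r+1\}$, and one addition step in Definition~\ref{def:indfree} finishes the proof. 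Your longer filtration scheme, and the alternative approach of building a chain from $(\CA'',\one)$ to $(\CA'',\kappa)$ by hand, are both salvageable in principle (the latter is essentially what the paper does for the harder Lemma~\ref{lem:akl5}), but they discard the already-available Theorem~\ref{thm:kappa1}(ii), which is the whole reason the $\ell=4$ case is easy.
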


\begin{proof}
	We utilize Lemma \ref{lemma:free_sequence_to_ziegler_multiplicity}. For that purpose set $\CA_0 := \CA_4^0(r)$ and $\CA_1 := \CA_0 \cup \{H_1 = \ker x_1\} = \CA$.
	It follows from Theorems \ref{thm:kappa1}(ii), \ref{thm:indfree1}(i) and \ref{thm:main} that 
	$(\CA_0^{H_0}, \kappa_0) = (\CA_4^0(r)'', \kappa) = (\CA(G(r,r,4))'', \kappa)$
	is inductively free with exponents $\{r+1, 2r+1, 3r-3\}$ (cf.~Proposition \ref{prop:intermediate}(i)). By
	Lemma \ref{lemma:free_sequence_to_ziegler_multiplicity} and Proposition \ref{prop:intermediate}(ii), we observe that
	$\left((\CA_1^{H_0})^{H_0 \cap H_1}, \kappa_1^*\right) = \left((\CA_1^{H_1})^{H_0 \cap H_1}, \kappa\right) = \left(\CA(G(r,1,3))'', \kappa\right)$ which is inductively free, as it is of rank $2$, with exponents $\{r+1, 2r+1\}$. It follows from Definition \ref{def:indfree} that also $(\CA_1^{H_0}, \kappa_1) = (\CA_4^1(r)'', \kappa)$
	is inductively free.
\end{proof}

In view of Lemmas \ref{lem:akl3} and \ref{lem:akl2}, we still have to consider the following instances.

\begin{lemma}
	\label{lem:akl5}
		Let $\CA = \CA_\ell^k(r)$ for $r \ge 3$, $\ell \ge 5$. Fix $H_0 = H_{s,t}(1) = \ker(x_s-x_t)$. Let 
\begin{itemize}
	\item [(i)] $k \in \{\ell-4, \ell-3\}$  and $k < s < t$; or 
	\item [(ii)] $k = \ell-3$  and $s \le \ell-3 < t$.
\end{itemize}
Then  	$(\CA'', \kappa)$ is inductively free.
\end{lemma}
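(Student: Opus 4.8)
The plan is to mimic the strategy of Lemma \ref{lem:akl4}, using Lemma \ref{lemma:free_sequence_to_ziegler_multiplicity} to run an addition–deletion argument on $(\CA'',\kappa)$ that is driven entirely by already-known inductively free Ziegler restrictions. First I would exploit the recursive structure of the intermediate arrangements from Proposition \ref{prop:intermediate}(ii): writing $\CA = \CA_\ell^k(r)$, I choose a free filtration of $\CA$ in the simple sense by adding coordinate hyperplanes one at a time, starting from $\CA_0 := \CA_\ell^0(r) = \CA(G(r,r,\ell))$ and ending at $\CA_k := \CA = \CA_\ell^k(r)$, where $\CA_i := \CA_{i-1} \cup \{\ker x_i\}$; each $\CA_i \cong \CA_\ell^i(r)$ is free by Proposition \ref{prop:intermediate}(i). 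Fixing $H_0 = H_{s,t}(1)$ (which lies in $\CA_0$ since $s<t$ and the reflection hyperplane $\ker(x_s-x_t)$ is present for all $r$), Lemma \ref{lemma:free_sequence_to_ziegler_multiplicity}(1) guarantees that every $(\CA_i^{H_0},\kappa_i)$ is free, and part (2) identifies the Euler multiplicity occurring in the triple $(\CA_i^{H_0},\kappa_i)$, $(\CA_{i-1}^{H_0},\kappa_{i-1})$, $((\CA_i^{H_0})^{H_0\cap\ker x_i},\kappa_i^*)$ as the Ziegler multiplicity $\kappa$ of the restriction of $\CA_i^{\ker x_i}$ to $H_0 \cap \ker x_i$.

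The base case $(\CA_0^{H_0},\kappa_0) = (\CA(G(r,r,\ell))'',\kappa)$ is inductively free: this follows from Theorem \ref{thm:kappa1}(ii) when $\rank \le 4$, and more relevantly in our range from Theorem \ref{thm:indfreemain1} is unavailable (since $\CA(G(r,r,\ell))$ is not inductively free for $r,\ell\ge 3$), so instead I invoke Theorem \ref{thm:main2}: $\CA(G(r,r,\ell))\setminus\{H_0\} \cong \CA_\ell^1(r)$ is free by Proposition \ref{prop:intermediate}(i), and $\CA(G(r,r,\ell))^{H_0} \cong \CA_{\ell-1}^1(r)$ by Table \ref{table2} (the $k=0$ row), and the latter is handled — the key point is that by Proposition \ref{prop:intermediate}(ii) restricting $\CA_\ell^0(r)$ along the reflection $H_0 = H_{s,t}(1)$ lands in an $\CA^1_{\ell-1}(r)$, whose own Ziegler restrictions we bootstrap via Lemma \ref{lem:akl4} (the $\ell=4$ case) and induction on $\ell$ using exactly the hypotheses (i) and (ii) of the present lemma together with Lemma \ref{lem:akl3}. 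For the inductive step along the filtration, at stage $i$ the Euler multiplicity term $((\CA_i^{H_0})^{H_0\cap\ker x_i},\kappa_i^*)$ equals, by Lemma \ref{lemma:free_sequence_to_ziegler_multiplicity}(2) and Table \ref{table2}, the Ziegler restriction $(\CA_{\ell-1}^{\ell-1}(r)'',\kappa) = (\CA(G(r,1,\ell-1))'',\kappa)$, which is inductively free by Theorem \ref{thm:indfreemain1} since $\CA(G(r,1,\ell-1))$ is inductively free by Theorem \ref{thm:indfree1}(i). One then checks that the exponents line up — $\exp((\CA_i^{H_0})^{H_0\cap\ker x_i},\kappa_i^*) \subseteq \exp(\CA_{i-1}^{H_0},\kappa_{i-1})$ — using Proposition \ref{prop:intermediate}(i) and Remark \ref{rem:euler} to track the single exponent that changes, so Definition \ref{def:indfree} upgrades inductive freeness from stage $i-1$ to stage $i$. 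Iterating from $i=1$ to $i=k$ gives that $(\CA_k^{H_0},\kappa_k) = (\CA_\ell^k(r)'',\kappa)$ is inductively free.

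The main obstacle I anticipate is the bookkeeping of \emph{which} restriction type actually occurs at $H_0 \cap \ker x_i$, since $H_0 = H_{s,t}(1)$ is a non-coordinate hyperplane and the combinatorial hypotheses (i) $k\in\{\ell-4,\ell-3\}$, $k<s<t$ versus (ii) $k=\ell-3$, $s\le\ell-3<t$ are precisely tuned so that the relevant localizations and Euler restrictions stay inside the inductively free range of Theorem \ref{thm:indfree2}(i)(b) — i.e. of the form $\CA_p^j(r)$ with $p-2\le j\le p$ — rather than dropping into the bad range $1\le j\le p-3$ exploited in Lemma \ref{lem:akl2}. Concretely, I would have to verify case by case (using Table \ref{table2} with the appropriate row for $\alpha_H = x_i - \zeta x_j$ relative to the position of the cut $k$) that passing from $\CA_\ell^k(r)$ to $\CA_{\ell-1}^{k'}(r)$ along a coordinate restriction and then contracting by $H_0$ never decreases $k'$ below $p-2$; the two hypotheses are exactly the boundary cases where this survives. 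A secondary check is the exponent inclusion in the addition–deletion step, which should be routine from Proposition \ref{prop:intermediate}(i) once the types are pinned down, but must be recorded to legitimately apply Definition \ref{def:indfree}.
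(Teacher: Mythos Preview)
Your filtration argument has a fatal gap at the base case. You propose to start at $\CA_0 = \CA_\ell^0(r) = \CA(G(r,r,\ell))$ and assert that $(\CA_0^{H_0}, \kappa_0)$ is inductively free, but for $\ell \ge 5$ this is simply false: Theorem~\ref{thm:kappa1} says $(\CA(W)'', \kappa)$ is inductively free if and only if $\CA(W)$ is inductively free or has rank at most $4$, and neither holds for $W = G(r,r,\ell)$ with $r\ge 3$, $\ell\ge 5$. Your attempted rescue via Theorem~\ref{thm:main2} contains two independent errors. First, $\CA(G(r,r,\ell))\setminus\{H_0\}$ is \emph{not} isomorphic to $\CA_\ell^1(r)$ --- the latter is obtained by \emph{adding} a coordinate hyperplane, not removing a reflection hyperplane. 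Second, and decisively, Theorem~\ref{thm:main2} requires the simple restriction $\CA'' \cong \CA_{\ell-1}^1(r)$ itself to be inductively free, which by Theorem~\ref{thm:indfree2}(i)(b) happens only when $(\ell-1)-2 \le 1$, i.e.\ $\ell \le 4$. More generally, every intermediate stage $(\CA_\ell^i(r)^{H_0}, \kappa_i)$ of your filtration with $0 \le i \le \ell-5$ is already shown to be \emph{non}-inductively free by Lemma~\ref{lem:akl2}(iii) (since $i < s < t$ in case (i)), so no choice of starting point below $k = \ell-4$ can work.

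The paper does use Lemma~\ref{lemma:free_sequence_to_ziegler_multiplicity} in the way you envisage, but only for a single step: to reduce case (i) with $k=\ell-3$ to $k=\ell-4$, and to reduce case (ii) to case (i). The essential content --- the case $k=\ell-4$, $k<s<t$ --- is proved by a completely different mechanism: one starts from the \emph{simple} arrangement $(\CA'',\one) \cong (\CA_{\ell-1}^{\ell-3}(r),\one)$, which \emph{is} inductively free by Theorem~\ref{thm:indfree2}(i)(b), and builds an explicit chain of multiplicities $\one = \mu_0 < \mu_1 < \cdots < \mu_{(\ell-1)r-2} = \kappa$ by raising multiplicities one at a time in a carefully chosen order (first all $Y_{i,t}(\zeta)$, then all $Y_{s,j}(\zeta)$, finally the copies of $Y_s$). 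At each step the Euler multiplicity is computed by hand via Proposition~\ref{ATWEulerProp}, identified with a Ziegler restriction of some $\CA_{\ell-1}^{\ell-3}(r)$ or $\CA_{\ell-1}^{\ell-1}(r)$ (inductively free by Theorems~\ref{thm:main} and~\ref{thm:indfree2}(i)(b)), and the exponent inclusions are verified explicitly. This direct induction-table construction is the missing idea in your plan; there is no shortcut that bypasses it, precisely because the ``obvious'' neighbouring Ziegler restrictions in the $\CA_\ell^i(r)$ family are not inductively free.
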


\begin{proof}
	It follows from Proposition \ref{prop:intermediate}(ii) that $\CA'' = \CA_{\ell-1}^{k+1}(r)$ in (i), and $\CA'' = \CA_{\ell-1}^{\ell-3}(r)$ in (ii).
	In any event, $\CA''$ is inductively free, by 
	Theorem \ref{thm:indfree2}(i)(b). In each case, we construct an induction table from the inductively free arrangement $(\CA'', \one)$ to $(\CA'', \kappa)$.
	
	Let $\CA_0=\CA^{\ell-4}_{\ell}(r),\CA_1=\CA^{\ell-4}_{\ell}(r)\cup\{H_1=\ker(x_{\ell-3})\}=\CA^{\ell-3}_{\ell}(r)$, then $\CA_0$ and $\CA_1$ are free, thanks to Proposition \ref{prop:intermediate}(i). Now use Lemma \ref{lemma:free_sequence_to_ziegler_multiplicity}(2) to see that for the triple $(\CA^{H_0}_0,\kappa_0),(\CA^{H_0}_1,\kappa_1), \left((\CA^{H_0}_1)^{H_0\cap H_1},\kappa_1^*\right)$ we get 
	\[
	\left((\CA^{H_0}_1)^{H_0\cap H_1},\kappa_1^*\right)=\left((\CA^{H_1}_1)^{H_0\cap H_1},\kappa\right)=(\CA^{\ell-1}_{\ell-1}(r)'',\kappa).
	\] The inclusion $\exp(\CA^{\ell-1}_{\ell-1}(r)'',\kappa)\subset\exp(\CA^{H_0}_0,\kappa_0)$ follows from Proposition \ref{prop:intermediate}(i) and Theorem \ref{thm:zieglermulti}. Now by Theorems \ref{thm:main} and \ref{thm:indfree1}(i) we see that $(\CA^{\ell-1}_{\ell-1}(r)'',\kappa)$ is inductively free. So it is sufficient to show the inductive freeness of $(\CA^{H_0}_0,\kappa_0)=(\CA^{\ell-3}_{\ell-1}(r),\kappa_0)$ to deduce part (i).
	
	Using Proposition \ref{prop:intermediate}(ii), we get $(\CA^{\ell-4}_{\ell}(r)^{H_0},\kappa)=(\CA^{\ell-3}_{\ell-1}(r),\kappa)$ and the Ziegler multiplicity is given by (with the notation as in Lemma \ref{lem:akl2}):\\
	\begin{equation*}
	\label{eq:akl47}
	\kappa(Y) =
	\begin{cases}
	r-1   & \text{ for } Y = Y_{s},\\
	2   & \text{ for } Y = Y_{i,t}(\zeta), i\neq s,\\
	2   & \text{ for } Y = Y_{s,j}(\zeta), j\neq t,\\
	1   & \text{ for } Y = Y_{i,j}(\zeta) \text{ and } \{s,t\} \cap \{i,j\} = \varnothing,\\
	1   & \text{ for } Y = Y_i, i\neq s.
	\end{cases}
	\end{equation*}
	
	Thanks to Theorem \ref{thm:indfree2}(i)(b), $(\CA^{\ell-3}_{\ell-1}(r),\mathbbm{1})$ is inductively free. So starting at the simple multiplicity $\mathbbm{1}$ we have to add $r-2$ hyperplanes of type $Y_{s}$ and $(\ell-2)r$ hyperplanes which are of type $Y_{i,t}(\zeta), (i\neq s)$ or $Y_{s,j}(\zeta),(j\neq t)$. We define multiplicities $\mu_i$ on $\CA^{\ell-3}_{\ell-1}(r)$ so that  $\mu_0=\mathbbm{1},\vert \mu_{i+1}\vert-\vert \mu_{i}\vert=1$ and $\mathbbm{1}=\mu_0\leq\dots\leq\mu_{(\ell-1)r-2}=\kappa$ as follows: Start by adding all the hyperplanes of type $Y_{1,t}(\zeta)$, then all of type $Y_{2,t}(\zeta)$ up to type $Y_{t-1,t}(\zeta)$. Continue by adding all of type $Y_{s,t+1}(\zeta)$ up to type $Y_{s,\ell}(\zeta)$. At the end add all the hyperplanes of type $Y_{s}$. 
	We present the induction in Table \ref{tableAl-4l} below.
	
	We now show that the $(\CA^{\ell-3}_{\ell-1}(r),\mu_i)$ provide a chain of inductively free arrangements from $(\CA^{\ell-3}_{\ell-1}(r),\mathbbm{1})$ to $(\CA^{\ell-3}_{\ell-1}(r),\kappa)$, proving part (i) of the lemma.
	
	While adding the hyperplanes of type $Y_{i,t}(\zeta),(i\neq s)$ and $Y_{s,j}(\zeta),(j\neq t)$ the Euler multiplicity coincides with the Ziegler multiplicity, giving $(\CA^{\ell-3}_{\ell-1}(r)'',\mu_i^*)=(\CA^{\ell-3}_{\ell-1}(r)'',\kappa)$ in every step for $1\leq i\leq (\ell-2)r$ which is inductively free, because of Theorems \ref{thm:main} and \ref{thm:indfree2}(i)(b). This is not affected by the different types of occurring restrictions (namely $\CA^{\ell-4}_{\ell-2}(r)$ and $\CA^{\ell-3}_{\ell-2}(r)$). 
	
	To show the equality between the multiplicities, let $H=Y_{i,t}(\zeta)$ be arbitrary and let $\mu_p$ be the unique multiplicity so that $\mu_p(H)=2$ and $\mu_{p-1}(H)=1$. There are three different types of localizations for $X\in\CA^{\ell-3}_{\ell-1}(r)^H$ in $\CA^{\ell-3}_{\ell-1}(r)$. The first kind has got $2$, the second kind $3$, and the third kind has got $r+2$ or $r+3$ elements, depending on the indices of the hyperplanes. The three types are as follows:
	\begin{itemize}
		\item[(a)] $\CA^{\ell-3}_{\ell-1}(r)_{X}=\{Y_{i,t}(\zeta),Y_{a,b}(\zeta')\}$ or $\{Y_{i,t}(\zeta),Y_{a}\}$ with $a,b\not\in\{i,t\}$, 
		 \item[(b)] $\CA^{\ell-3}_{\ell-1}(r)_{X}=\{Y_{i,t}(\zeta),Y_{i,a}(\zeta'),Y_{a,t}(\zeta'')\}$ or $\{Y_{i,t}(\zeta),Y_{i,j}(\zeta'),Y_{s,j}(\zeta'')\}$,
		 \item[(c)]  $\CA^{\ell-3}_{\ell-1}(r)_{X}=$ $\{Y_i,Y_{s},Y_{i,t}(1),Y_{i,t}(2),\dots, Y_{i,t}(r)\}$, respectively\\ $\CA^{\ell-3}_{\ell-1}(r)_{X}=\{Y_{s},Y_{s,j}(1),Y_{s,j}(2),\dots, Y_{s,j}(r)\}$.
	\end{itemize}
	In case of (a), using Proposition \ref{ATWEulerProp}(1) gives $\mu_p^*(X)=1=\kappa(X)$, since $(\mu_p)_{X}(Y_{a,b}(\zeta'))=(\mu_p)_{X}(Y_a)=1$. The cases (b) and (c) are handled similarly, using Proposition \ref{ATWEulerProp}(2). This gives the desired equality $(\CA^{\ell-3}_{\ell-1}(r)'',\mu_i^*)=(\CA^{\ell-3}_{\ell-1}(r)'',\kappa)$ at each step for $1\leq i\leq (\ell-2)r$. Note that because of Theorems \ref{thm:main} and \ref{thm:indfree2}(i)(b) every restriction $(\CA^{\ell-3}_{\ell-1}(r)'',\kappa)$ is inductively free. Now use Theorem \ref{thm:zieglermulti} to see 
	\[
	\exp(\CA^{\ell-3}_{\ell-1}(r)'',\mu_i^*)=\exp(\CA^{\ell-3}_{\ell-1}(r)'',\kappa)=\{r+1,\dots,(\ell-3)r+1,(\ell-2)r-1\},
	\]
	 which gives $\exp(\CA^{\ell-3}_{\ell-1}(r),\mu_{(\ell-2)r})=\{r+1,2r+1,\dots,$ $(\ell-2)r-1,(\ell-2)r+1\}$, by using Theorem \ref{thm:add-del} and this shows that $(\CA^{\ell-3}_{\ell-1}(r),\mu_{(\ell-2)r})$ is inductively free.
	
	Now $r-2$ hyperplanes of type $Y_{s}$ are left to be added. Owing to Theorem \ref{thm:kappa1},  for an arbitrary coordinate hyperplane $H_j\in\CA^{\ell-1}_{\ell-1}(r)$, the restriction $\left(\CA^{\ell-1}_{\ell-1}(r)^{H_j},\kappa\right)$ is inductively free with exponents $\exp\left(\CA^{\ell-1}_{\ell-1}(r)^{H_j},\kappa\right)=\{r+1,2r+1,\dots,$ $(\ell-2)r+1\}$ and $(\CA^{\ell-3}_{\ell-1}(r),\kappa)$ is free with exponents $\exp(\CA^{\ell-3}_{\ell-1}(r),\kappa)=\{r+1,2r+1,\dots,(\ell-2)r+1,(\ell-1)r-3\}$. So it is sufficient to show that $\left(\CA^{\ell-3}_{\ell-1}(r)^{Y_{s}},\mu_i^*\right)=\left(\CA^{\ell-1}_{\ell-1}(r)^{H_j},\kappa\right)$ for every $i$ with $(\ell-2)r+1\leq i\leq (\ell-1)r-2$. \\
	Showing that $\left(\CA^{\ell-3}_{\ell-1}(r)^{Y_{s}},\mu_{(\ell-2)r+1}^*\right)=\left(\CA^{\ell-1}_{\ell-1}(r)^{H_j},\kappa\right)$ is once again straightforward, relying on Proposition \ref{ATWEulerProp}. Thanks to Definition \ref{def:Euler}
	and Remark \ref{rem:euler}, it is sufficient to show that all localizations $\left(\CA^{\ell-3}_{\ell-1}(r)_X,(\mu_i)_X\right)$ for any $X\in\CA^{\ell-3}_{\ell-1}(r)^{Y_{s}}$ and $(\ell-2)r+1\leq i\leq (\ell-1)r-2$ share a common exponent, which implies in turn that the Euler multiplicity does not change. This is obvious for the localizations with two elements, since Proposition \ref{ATWEulerProp}(1) does not rely on the multiplicity of $Y_{s}$. The remaining localizations are $X_1=\{Y_{s},Y_{a},Y_{a,t}(1),\dots,Y_{a,t}(r)\}$ and $X_2=\{Y_{s},Y_{s,j}(1),\dots,Y_{s,j}(r)\},(j\neq t)$. We have shown above that $\mu_{(\ell-2)r+1}^*(X_1)=r+1$ which implies that $\exp\left(X_1,\mu_{(\ell-2)r+1}\vert_{X_1}\right)=\{r+1,r+2\}$ since $\vert \mu_{(\ell-2)r+1}\vert_{X_1}\vert=2r+3$. We also have $\left(\CA_{X_1},\mu_{(\ell-1)r-2}\vert_{X_1}\right)=\left(\CA_{X_1},\kappa\vert_{X_1}\right)\simeq \left(\CA^1_3(r)^{H_{2,3}(\zeta)},\kappa\right)$, as $2$-arrangements which shows that $\exp(\CA_{X_1},\mu_{(\ell-1)r-2}\vert_{X_1})=\{r+1,2r-1\}$ and we see that $r+1$ is the common exponent. In particular, we get $\mu_i^*(X_1)=r+1$ for an arbitrary $(\ell-2)r+1\leq i\leq (\ell-1)r-2$. The argument for $X_2$ is identical with the only difference that $\left(\CA_{X_2},\mu_{(\ell-1)r-2}\vert_{X_2}\right)\simeq \left(\CA^0_3(r)^{H_{1,2}(\zeta)},\kappa\right)$
	 and the exponents change from $\{r+1,r+1\}$ to $\{r+1,2r-2\}$. This completes the proof. 

\begin{table}[ht!b] \small
	\renewcommand{\arraystretch}{1.3}
	\begin{tabular}{lll}
		\hline
		$\exp(\CA',\mu')$ & $\alpha_H$ & $\exp(\CA'',\mu^*)$ \\
		\hline
		\hline
		$\exp(\CA^{\ell-3}_{\ell-1}(r),\mathbbm{1})=$ & $Y_{1,t}(1)$ & $\exp(\CA^{\ell-3}_{\ell-1}(r)'',\kappa)$ \\
		$\{1,r+1,2r+1,\dots,(\ell-3)r+1,(\ell-2)r-1\}$\\
		$\{2,r+1,2r+1,\dots,(\ell-3)r+1,(\ell-2)r-1\}$ & $Y_{1,t}(2)$ & $\exp(\CA^{\ell-3}_{\ell-1}(r)'',\kappa)$ \\
		\vdots & \vdots &  \vdots \\
		$\{r,r+1,2r+1,\dots,(\ell-3)r+1,(\ell-2)r-1\}$ & $Y_{1,t}(r)$ & $\exp(\CA^{\ell-3}_{\ell-1}(r)'',\kappa)$ \\
		$\{r+1,r+1,2r+1,\dots,(\ell-3)r+1,(\ell-2)r-1\}$ & $Y_{2,t}(1)$ & $\exp(\CA^{\ell-3}_{\ell-1}(r)'',\kappa)$ \\
		\vdots & \vdots & \vdots \\
		$\{(t-1)r-1,r+1,2r+1,\dots,(\ell-3)r+1,(\ell-2)r-1\}$ & $Y_{t-1,t}(r)$ & $\exp(\CA^{\ell-3}_{\ell-1}(r)'',\kappa)$ \\
		$\{(t-1)r,r+1,2r+1,\dots,(\ell-3)r+1,(\ell-2)r-1\}$ & $Y_{s,t+1}(1)$ & $\exp(\CA^{\ell-3}_{\ell-1}(r)'',\kappa)$ \\
		\vdots & \vdots & \vdots \\
		$\{(\ell-2)r,r+1,2r+1,\dots,(\ell-3)r+1,(\ell-2)r-1\}$ & $Y_{s,\ell}(r)$ & $\exp(\CA^{\ell-3}_{\ell-1}(r)'',\kappa)$ \\
		$\{r+1,2r+1,\dots,(\ell-3)r+1,(\ell-2)r-1,(\ell-2)r+1\}$ & $Y_{s}$ & $\exp(\CA^{\ell-1}_{\ell-1}(r)'',\kappa)$ \\
		\vdots & \vdots & \vdots \\
		$\{r+1,2r+1,\dots,(\ell-3)r+1,(\ell-2)r+1,(\ell-1)r-4\}$ & $Y_{s}$ & $\exp(\CA^{\ell-1}_{\ell-1}(r)'',\kappa)$ \\
		$\exp(\CA^{\ell-3}_{\ell-1}(r),\kappa)=$\\ $\{r+1,2r+1,\dots,(\ell-3)r+1,(\ell-2)r+1,(\ell-1)r-3\}$\\
		\hline		
	\end{tabular}
	\medskip	
	\caption{Induction table for $\CA=\CA^{\ell-4}_{\ell}(r),(\CA^H,\kappa)=(\CA^{\ell-3}_{\ell-1}(r),\kappa)$}
	\label{tableAl-4l}
\end{table}

To show (ii) let $\CA=\CA^{\ell-3}_\ell(r)$ and $H_0=H_{\ell-3,t}(1)$ with $\ell-3<t\leq \ell$ (we get $s=\ell-3$ after a permutation of basis vectors). Let $\CA_0=\CA\backslash\{H_1=\ker(x_{\ell-3})\}$ and $\CA_1=\CA$, then $\CA_0\simeq \CA^{\ell-4}_\ell(r)$ and we are in the setting of (i) for $k=\ell-4$. Now use Lemma \ref{lemma:free_sequence_to_ziegler_multiplicity} as in the proof of (i) to see that $(\CA'',\kappa)=(\CA_1^{H_0},\kappa_1)$ is inductively free because $(\CA_0^{H_0},\kappa_0)$ and $\left((\CA^{H_0}_1)^{H_0\cap H_1},\kappa_1^*\right)=\left((\CA^{H_1}_1)^{H_0\cap H_1},\kappa\right)=(\CA^{\ell-1}_{\ell-1}(r)'',\kappa)$ are.
	\end{proof}
		
We conclude this subsection with the following observation.

\begin{remark}
	\label{rem:akl}
	Owing to the results above, for $\CA = \CA_\ell^k(r)$ with $r \ge 3$, $\ell \ge 5$
	and $1 \le k \le \ell-3$, it depends on the choice of 
	$H_0$ whether 
	$(\CA'', \kappa)$ 
	is inductively free or not (e.g., see Corollary \ref{cor:akl1} 
	and Lemma	\ref{lem:akl2}).
\end{remark}

\subsection{Groups of type $G_{33}$ and $G_{34}$}
\label{ssect:exceptional}
Here we consider the outstanding restriction of the reflection arrangements of $G_{33}$ and $G_{34}$  of dimension at least $4$.

Since the pointwise stabilizer $W_X$ of $X$ in $L(\CA(W))$ is itself a complex reflection group, by \cite[Thm.~1.5]{steinberg:differential}, 
following 
\cite[\S 3, App.]{orliksolomon:unitaryreflectiongroups} (cf.~\cite[\S 6.4, App.\ C]{orlikterao:arrangements}),
we may  label the $W$-orbit 
of $X \in L(\CA(W))$ by the type $T$ say,
of $W_X$; so we usually denote such a restriction $\CA(W)^X$ simply by the pair $(W,T)$ in Tables \ref{tableAA} and \ref{tableB}.

It follows from the deletion part of Theorem \ref{thm:add-del-simple} that if 
both $\CA$ and $\CA''$ are free and $\exp \CA'' \subset \exp \CA$, then also $\CA'$ is free.
If in this case in addition $\CA''$ is inductively free, then so is $(\CA'', \kappa)$, thanks to Theorem \ref{thm:main2}.
Thus the following is a consequence of Theorems \ref{thm:main2},   \ref{thm:add-del-simple},  and \ref{thm:indfree2}(i)(c),   
the fact that all restrictions of reflection arrangements are free,
and the data in \cite[Tables 10, 11]{orliksolomon:unitaryreflectiongroups}
(cf.~\cite[C.14 - C.17]{orlikterao:arrangements}).

\begin{lemma}
	\label{lem:g33g34}
	Let $W$ be  of type 
	$G_{33}$ or  $G_{34}$ 
	with reflection arrangement $\CA(W)$.
	Let $\CA = \CA(W)^X$ for $X \in L(\CA(W))$ with restriction $\CA^H$ as in Table \ref{tableAA}.
	Then $(\CA^H,\kappa)$ is inductively free for any $H \in \CA$.	
	\begin{table}[ht!b] 
		\renewcommand{\arraystretch}{1.5}
		\begin{tabular}{lllc}
			\hline
			$W$ & $\CA = \CA(W)^X$ & $\CA^H$ & $\rank \CA^H$  \\
			\hline
			\hline
			$G_{33}$ & $(G_{33},A_1)$ & $(G_{33},A_1^2)$ & $3$  \\
			\hline
			$G_{34}$ & $(G_{34},A_1^2)$ & $(G_{34},A_1^3)$  & $3$  \\
			
			& $(G_{34},A_2)$ & $(G_{34},A_1A_2)$  & $3$  \\
			\hline
		\end{tabular}
		\medskip
		\caption{Restrictions of $G_{33}$ and $G_{34}$, I} 
		\label{tableAA} 
	\end{table}
\end{lemma}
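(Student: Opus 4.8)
The plan is to run, for each of the three restrictions $\CA = \CA(W)^X$ recorded in Table~\ref{tableAA}, the formal chain of implications indicated in the paragraph preceding the lemma. Fix $H \in \CA$ and consider the triple $(\CA, \CA\setminus\{H\}, \CA^H)$ with respect to $H$. Since $\rank\CA = 4$ and restricting to a member $H$ of $\CA$ drops the rank by one, $\CA^H$ has rank $3$; moreover, as $\CA = \CA(W)^X$ is a restriction of the reflection arrangement, so is $\CA^H$, say $\CA^H = \CA(W)^Y$ with $\dim Y = 3$ (using transitivity of restriction inside $L(\CA(W))$). By the results of Orlik--Terao and Hoge--R\"ohrle recalled in the introduction, both $\CA$ and $\CA^H$ are free, and their exponents can be read off from \cite[Tables 10, 11]{orliksolomon:unitaryreflectiongroups} (cf.~\cite[C.14--C.17]{orlikterao:arrangements}); in particular $\CA^H$ is the rank-$3$ arrangement listed in the third column of Table~\ref{tableAA}. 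Because $W$ is of type $G_{33}$ or $G_{34}$ and $\dim Y = 3 \le 3$, Theorem~\ref{thm:indfree2}(i)(c) shows that $\CA^H$ is inductively free.

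Next I would check, case by case, that $\exp \CA^H \subset \exp \CA$, i.e.~that $\exp\CA$ is obtained from $\exp\CA^H$ by adjoining a single further exponent; this is a direct reading of the exponent data cited above. Granting this, the deletion part of Terao's Addition--Deletion Theorem~\ref{thm:add-del-simple} (statements (i) and (iii) imply (ii), applied to the triple above with distinguished hyperplane $H$) shows that $\CA\setminus\{H\}$ is free. Since $\CA\setminus\{H\}$ is free and $\CA^H$ is inductively free, Theorem~\ref{thm:main2} then yields that $(\CA^H,\kappa)$ is inductively free. Carrying this out for each row of Table~\ref{tableAA} and each $H \in \CA$ completes the proof.

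The one genuinely case-by-case ingredient is the exponent containment $\exp\CA^H\subset\exp\CA$ for the three pairs $\big((G_{33},A_1),(G_{33},A_1^2)\big)$, $\big((G_{34},A_1^2),(G_{34},A_1^3)\big)$, and $\big((G_{34},A_2),(G_{34},A_1A_2)\big)$; everything else is formal. Thus the main, though routine, obstacle is to extract the relevant exponents from the Orlik--Solomon / Orlik--Terao tables and verify the containment in each instance. It is worth noting that one cannot shortcut via Theorem~\ref{thm:main}: here $\CA$ itself has rank $4 > 3$, so Theorem~\ref{thm:indfree2}(i)(c) does not apply to $\CA$, and $\CA$ need not be inductively free --- which is precisely why the argument must pass through the freeness of $\CA\setminus\{H\}$ together with Theorem~\ref{thm:main2}.
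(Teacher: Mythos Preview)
Your proposal is correct and follows essentially the same argument as the paper: use the exponent data from the Orlik--Solomon/Orlik--Terao tables to verify $\exp\CA^H\subset\exp\CA$, apply the deletion part of Theorem~\ref{thm:add-del-simple} to get freeness of $\CA\setminus\{H\}$, observe that $\CA^H$ is inductively free by Theorem~\ref{thm:indfree2}(i)(c) since $\dim Y\le 3$, and conclude via Theorem~\ref{thm:main2}. The paper records precisely this chain of implications in the paragraph immediately preceding the lemma.
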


The remaining instances of restrictions of reflection arrangements of type $G_{33}$ or $G_{34}$ to be considered are listed in Table \ref{tableB}.
Theorem \ref{thm:main2} does not apply here, as  $\CA'$ is not free. 
However, note that in each  case when $\CA^H$ is of rank $3$,  
$\CA^H$ is inductively free, thanks to Theorem \ref{thm:indfree2}(i)(c).
In each of these instances we provide a suitable induction table from the simple arrangement $\CA^H$ to  the Ziegler multiplicity.

\begin{lemma}
	\label{lem:g33g34-2}
	Let $W$ be  of type 
	$G_{33}$ or  $G_{34}$ 
	with reflection arrangement $\CA(W)$.
	Let $\CA = \CA(W)^X$ for $X \in L(\CA(W))$ with restriction $\CA^H$ as in Table \ref{tableB}.
	Then  
	$(\CA^H,\kappa)$ is inductively free if and only if $\dim X = 4$. 
\begin{table}[ht!b] 
	\renewcommand{\arraystretch}{1.5}
	\begin{tabular}{lllc}
		\hline
		$W$ & $\CA = \CA(W)^X$ & $\CA^H$ & $\rank \CA^H$ \\
		\hline
		\hline
		$G_{33}$ & $(G_{33},A_1)$ & $(G_{33},A_2)$ & $3$  \\
		\hline
		$G_{34}$ & $(G_{34},A_1)$ & $(G_{34},A_1^2)$, $(G_{34},A_2)$  & $4$ \\
		& $(G_{34},A_1^2)$ & $(G_{34},A_1A_2)$, $(G_{34},A_3)$  & $3$ \\
		& $(G_{34},A_2)$ & $(G_{34},A_3)$, $(G_{34},G(3,3,3))$  & $3$ \\ 				  
		\hline
	\end{tabular}
	\medskip
	\caption{Restrictions of $G_{33}$ and $G_{34}$, II}
	\label{tableB} 
\end{table}
\end{lemma}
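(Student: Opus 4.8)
The plan is to split along the dichotomy visible in the last column of Table~\ref{tableB}. For the restrictions occurring there one has $\dim X = \rank\CA = \rank\CA^H + 1$, so $\dim X = 4$ holds precisely for the three rows $(G_{33},A_1)$, $(G_{34},A_1^2)$, $(G_{34},A_2)$, in which $\rank\CA^H = 3$, whereas the remaining row $(G_{34},A_1)$ has $\dim X = 5$ and $\rank\CA^H = 4$. The two implications of the lemma are proved separately: inductive freeness of $(\CA^H,\kappa)$ in the first three rows, and its failure in the last one.

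For the forward implication I would proceed as announced just before the statement. In each of the (five) cases for the type of $\CA^H$ occurring among those three rows — $(G_{33},A_2)$, $(G_{34},A_1A_2)$, $(G_{34},A_3)$ (arising as a restriction of both $(G_{34},A_1^2)$ and $(G_{34},A_2)$), and $(G_{34},G(3,3,3))$ — the simple arrangement $\CA^H$ is a restriction of $\CA(G_{33})$ or $\CA(G_{34})$ to a flat of dimension $\le 3$, hence inductively free by Theorem~\ref{thm:indfree2}(i)(c), so $(\CA^H,\one)\in\CIFM$. I would then build an explicit induction table realising a free filtration $\one = \mu_0 < \mu_1 < \cdots < \mu_n = \kappa$ on $\CA^H$: at step $i$ one raises the multiplicity by one on a chosen $Y_i\in\CA^H$ and considers the triple $(\CA^H,\mu_i),(\CA^H,\mu_{i-1}),((\CA^H)^{Y_i},\mu_i^*)$ of Theorem~\ref{thm:add-del}. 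Since $\rank\CA^H = 3$, the Euler restriction $((\CA^H)^{Y_i},\mu_i^*)$ has rank $2$ and is inductively free by Remark~\ref{rem:rank2indfree}, while $(\CA^H,\mu_{i-1})$ is inductively free by induction; so by Definition~\ref{def:indfree} the only thing to check at each step is the exponent inclusion $\exp((\CA^H)^{Y_i},\mu_i^*)\subseteq\exp(\CA^H,\mu_{i-1})$. The values of $\kappa$ and the relevant localisations $\CA_Y$ are read off from the lattice data in \cite{orliksolomon:unitaryreflectiongroups} (cf.\ \cite[App.~C]{orlikterao:arrangements}); the Euler multiplicities $\mu_i^*$ then follow from Proposition~\ref{ATWEulerProp}, the exponents $\exp(\CA^H,\mu_i)$ are forced by Theorem~\ref{thm:add-del} at each step, and the verification becomes a finite (computer-assisted) check, entirely parallel to the tables accompanying Lemmas~\ref{lem:akl5} and~\ref{lem:g33g34}. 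Choosing the ordering of the $Y_i$ so that these exponent inclusions hold throughout is the bulk of the work.

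For the reverse implication I must show $(\CA^H,\kappa)$ is \emph{not} inductively free in the row $(G_{34},A_1)$, where $\CA=\CA(G_{34})^X$ with $\dim X = 5$ and $\CA^H$ is of type $(G_{34},A_1^2)$ or $(G_{34},A_2)$, of rank $4$ (note $\CA^H$ itself is not inductively free here, since $\dim > 3$, by Theorem~\ref{thm:indfree2}(i)(c), so one cannot hope to start an induction table from $(\CA^H,\one)$). By Theorem~\ref{thm:localmulti} it suffices to exhibit a flat $X'\in L(\CA^H)$ for which $((\CA^H)_{X'},\kappa_{X'})$ fails to be inductively free. Here Lemma~\ref{lem:euler}(ii) is the key: $((\CA^H)_{X'},\kappa_{X'}) = ((\CA_{X'})^H,\kappa)$ is the Ziegler restriction of the localisation $\CA_{X'}$, and since localisation commutes with restriction, $\CA_{X'} = (\CA(G_{34})^X)_{X'} = \CA((G_{34})_{X'})^X$ is itself a restriction of the reflection arrangement of the (complex reflection) parabolic $(G_{34})_{X'}$. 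Using the parabolic and restriction data for $G_{34}$ (from \cite{orliksolomon:unitaryreflectiongroups}, cf.\ \cite[App.~C, App.~D]{orlikterao:arrangements}) I would select a rank-$3$ flat $X'$ on which $\kappa$ restricts trivially, $\kappa_{X'}\equiv\one$, with $(\CA^H)_{X'}\cong\CA(G(3,3,3))$ — the presence of $G(3,3,3)$-type parabolics in $G_{34}$ being already visible in Table~\ref{tableB}. Since $\CA(G(3,3,3))$ is not inductively free by Theorem~\ref{thm:indfree1}(i), neither is $((\CA^H)_{X'},\one)$, and Theorem~\ref{thm:localmulti} gives that $(\CA^H,\kappa)$ is not inductively free. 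Combining both implications with Table~\ref{tableB} yields the lemma.

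The step I expect to be the main obstacle is, in the reverse implication, pinning down the explicit flat $X'$ and confirming that $\kappa$ restricts to $\one$ on its localisation; if no rank-$3$ localisation of the required isomorphism type with trivial $\kappa$ is available, the fallback is to identify a rank-$3$ localisation $((\CA^H)_{X'},\kappa_{X'})$ carrying an explicit non-trivial multiplicity and to verify directly — once more a finite, computer-assisted check — that this rank-$3$ multiarrangement is not inductively free, after which Theorem~\ref{thm:localmulti} applies exactly as above.
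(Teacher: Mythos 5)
Your treatment of the three rows with $\rank\CA^H=3$ (equivalently $\dim X=4$) matches the paper: there the proof is exactly an explicit, computer-assisted induction table for each of $(G_{33},A_2)$, $(G_{34},A_1A_2)$, $(G_{34},A_3)$ and $(G_{34},G(3,3,3))$, starting from the inductively free simple arrangement and raising multiplicities one at a time, with the rank-$2$ Euler restrictions handled by Remark~\ref{rem:rank2indfree} and the exponents forced by Theorem~\ref{thm:add-del}. The genuine gap is in your reverse implication, the row $(G_{34},A_1)$ with $\CA^H$ of type $(G_{34},A_1^2)$ or $(G_{34},A_2)$ and rank $4$. Your entire strategy (including the fallback) is to exhibit a localization $((\CA^H)_{X'},\kappa_{X'})$ that is not inductively free and to invoke Theorem~\ref{thm:localmulti}. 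You give no candidate flat $X'$ and no evidence that one exists, and there is strong reason to believe none does: by Lemma~\ref{lem:euler}(ii) every rank-$3$ localization of $(\CA^H,\kappa)$ is itself the Ziegler restriction of a rank-$4$ localization $\CA_{X'}=\CA\bigl((G_{34})_{X'}\bigr)^X$, i.e.\ of a rank-$\le 4$ restriction of a parabolic reflection arrangement, and for all such arrangements the paper's other results (Theorem~\ref{thm:kappa1}(ii), Corollary~\ref{cor:akl1}, Lemmas~\ref{lem:akl3}, \ref{lem:akl4}, \ref{lem:g33g34} and part (1) of this very lemma, combined with Theorem~\ref{thm:products}) yield inductively free Ziegler restrictions. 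In other words, non-inductive freeness of these two multiarrangements is apparently not detectable locally at all, so a localization argument cannot close the case, and your fallback (a computer check of a rank-$3$ localization carrying a nontrivial multiplicity) runs into the same wall.

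The paper's actual argument here is global and strictly stronger: for each of $((G_{34},A_1^2),\kappa)$ and $((G_{34},A_2),\kappa)$ it shows by an exhaustive SAGE computation that the multiarrangement is not even \emph{additively} free in the sense of Definition~\ref{def:order}. Concretely, one runs a backward elimination starting from the free pair $(\CA^H,\kappa)$ with its known exponents, assigning ``virtual exponents'' to each possible deletion and discarding a step unless the order of the corresponding Euler restriction equals the sum of three of the four virtual exponents (the necessary numerical condition from Theorem~\ref{thm:add-del}); every admissible chain terminates before reaching the empty arrangement, so no free filtration from $\one$ (indeed from the empty multiplicity) up to $\kappa$ exists, and in particular $(\CA^H,\kappa)$ is not inductively free. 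If you want to complete your proof you must replace the localization step by such a global obstruction (or an equivalent exhaustive search over addition--deletion chains); as written, the reverse implication is unsupported.
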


To not interrupt the present flow, we defer the proof of the lemma to the appendix.

\subsection{Proof for Theorem \ref{thm:kappa2}}
We finally present the proof of our main theorem. Thanks to Theorems \ref{thm:main} and \ref{thm:indfree2},  
for the reverse implication of Theorem \ref{thm:kappa2}
we only need to investigate the instances when either $\CA(G(r,r,n))^X \cong \CA^k_\ell(r)$, where $\ell = \dim X$ and $1 \leq k \leq \ell-3$, or when  
$W$ is of type $G_{33}$, 
or $G_{34}$ and $X \in L(\CA(W)) \setminus \{V\}$ with  $\dim X \geq 4$.

The reverse implication in part (i) of Theorem \ref{thm:kappa2} follows from Theorem \ref{thm:indfreemain1}.

The reverse implication in part (ii) of Theorem \ref{thm:kappa2} follows from
Corollary \ref{cor:akl1}, 
and Lemmas \ref{lem:akl3}, \ref{lem:akl4}, and \ref{lem:akl5} along with  
  Theorem \ref{thm:indfree2}(b) and Proposition \ref{prop:intermediate}(b). 
While the forward implication in part (ii) follows from Lemma \ref{lem:akl2} together with  Theorem \ref{thm:indfree2}(b) and Proposition \ref{prop:intermediate}(b).

For part (iii) of Theorem \ref{thm:kappa2}, the equivalence follows from 
Lemmas \ref{lem:g33g34} and \ref{lem:g33g34-2}.

For the forward implication of Theorem \ref{thm:kappa2}, assume that 
we are not in case (i). Then by Theorem \ref{thm:indfree2}, we are in case (ii) or case (iii). So we are done.

\section{Proof of Theorem \ref{thm:delta-indfree}}
\label{sec:proof}

We require the following basic observation.

\begin{lemma}
	\label{lem:delta-restriction}
	Let 	$H_0 \in \CA$, $m_0 \ge 1$, and 
	$\delta = \delta_{H_0,m_0}$. 
	%is the multiplicity concentrated at $H_0$.
	Fix $H \in \CA \setminus \{H_0\}$.
	Then $(\CA^H, \delta^*) = (\CA^H, \delta_{H_0\cap H,m_0})$, i.e.~$\delta^*$ is itself a concentrated multiplicity on $\CA^H$.
\end{lemma}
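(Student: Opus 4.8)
The plan is to compute $\delta^*(Y)$ for each $Y\in\CA^H$ by reducing to a rank-two situation and then reading off exponents via Theorems~\ref{thm:delta-free} and~\ref{thm:add-del}. First note that since $H\ne H_0$ we have $\delta(H)=1$, so in the Euler restriction of $(\CA,\delta)$ with respect to $H$ (in the sense of Definition~\ref{def:Euler}) the hyperplane $H$ is deleted honestly. By that definition the value $\delta^*(Y)$ depends only on the rank-two pair $(\CA_Y,\delta_Y)$ together with the chosen hyperplane $H\in\CA_Y$; equivalently, $\delta^*(Y)$ equals the Euler multiplicity, evaluated at $Y$, of the rank-two arrangement $\CA_Y$ restricted to $H$. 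Here $\CA_Y^{H}=\{Y\}$ is a single hyperplane of the $(\ell-1)$-dimensional space $H$, since any two distinct members of $\CA_Y$ both contain the codimension-two flat $Y$ and therefore meet exactly in $Y$.

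Next I would split the hyperplanes $Y\in\CA^H$ into two classes. If $Y=H_0\cap H$, then $H_0\in\CA_Y$ and $\delta_Y$ is precisely the concentrated multiplicity $\delta_{H_0,m_0}$ on $\CA_Y$. If $Y\ne H_0\cap H$, then $H_0\notin\CA_Y$: for $H_0\in\CA_Y$ would force $Y\subseteq H_0\cap H$, and since both flats have codimension two this would give $Y=H_0\cap H$; hence in this case $\delta_Y\equiv\one$. Put $k:=\vert\CA_Y\vert\ge 2$ and let $\CA_Y':=\CA_Y\setminus\{H\}$ with $\delta_Y':=\delta_Y\vert_{\CA_Y'}$; note $\CA_Y'$ still carries a concentrated multiplicity at $H_0$ in the first case and still equals $\one$ in the second.

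Finally I would compute exponents. The underlying simple arrangements have $\exp\CA_Y=\{1,k-1,0^{\ell-2}\}$ and $\exp\CA_Y'=\{1,k-2,0^{\ell-2}\}$ (with $k-2=0$ when $k=2$). In the first case Theorem~\ref{thm:delta-free} upgrades these to $\exp(\CA_Y,\delta_Y)=\{m_0,k-1,0^{\ell-2}\}$ and $\exp(\CA_Y',\delta_Y')=\{m_0,k-2,0^{\ell-2}\}$; in the second case the exponents remain $\{1,k-1,0^{\ell-2}\}$ and $\{1,k-2,0^{\ell-2}\}$. In either case the two multisets differ only by decrementing the entry $k-1$, so applying the Addition--Deletion Theorem~\ref{thm:add-del} to the triple of $(\CA_Y,\delta_Y)$ with respect to $H$ gives $\exp(\CA_Y^{H},\delta_Y^{*})=\{m_0,0^{\ell-2}\}$, respectively $\{1,0^{\ell-2}\}$. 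Since $\CA_Y^{H}=\{Y\}$ is a single hyperplane and a one-hyperplane multiarrangement of multiplicity $m$ has exponents $\{m,0^{\ell-2}\}$, we conclude $\delta^*(Y)=m_0$ when $Y=H_0\cap H$ and $\delta^*(Y)=1$ otherwise, i.e.\ $\delta^*=\delta_{H_0\cap H,m_0}$ on $\CA^H$. The only point needing care is the situation where $H_0\cap H$ is simultaneously $H\cap H'$ for further hyperplanes $H'\in\CA$, so that $k\ge 3$: there Proposition~\ref{ATWEulerProp} does not apply directly (its parameter ``$m_0$'' equals $\delta(H)=1$), whereas the exponent bookkeeping above covers all $k\ge 2$ uniformly.
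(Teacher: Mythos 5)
Your argument is correct and is essentially the paper's proof: localize at each $Y \in \CA^H$, observe that $\delta_Y$ is trivial unless $Y = H_0 \cap H$, apply Theorem~\ref{thm:delta-free} to the rank-two localization to get $\exp(\CA_Y,\delta_Y)=\{m_0,k-1\}$ and $\exp(\CA_Y',\delta_Y')=\{m_0,k-2\}$, and read off $\delta^*(Y)=m_0$. The only cosmetic difference is that you extract $\delta^*(Y)$ by invoking the full Addition--Deletion Theorem~\ref{thm:add-del} (two-implies-third), whereas the paper reads it off directly as the common non-zero exponent via Remark~\ref{rem:euler}; both routes are valid and the calculations coincide.
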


\begin{proof}
	Let $X \in \CA^H$. If $H_0 \notin \CA_X$, then $\delta_X \equiv \one$, so $\delta^*(X) = 1$. If $H_0 \in \CA_X$, then $(\CA_X, \delta_X) = (\CA_X, \delta_{H_0, m_0})$ with $\exp(\CA_X, \delta_X) = \{m_0, e\}$ and $\exp(\CA_X\setminus\{H\}, \delta_X') = \{m_0, e-1\}$. Thus, $\delta^*(X) = m_0$, by  Remark \ref{rem:euler}.	
\end{proof}

\begin{proof}[Proof of Theorem \ref{thm:delta-indfree}]
The forward implication is \cite[Cor.~5.6]{hogeroehrle:ZieglerII}.

For the reverse implication assume that $(\CA, \delta)$ is inductively free.
We argue by induction on $|\CA|$. If $|\CA| \le 3$, then $\CA$ itself is inductively free. So suppose that $|\CA| > 3$ and that the result holds for arrangements with concentrated multiplicities with fewer hyperplanes.

We argue further by induction on $m_0 \ge 1$. If $m_0 = 1$, there is nothing to show. So assume $m_0 > 1$ and that the result holds for a concentrated multiplicity on $\CA$ with a lower parameter.

Since $(\CA, \delta)$ is inductively free, there is a hyperplane $H \in \CA$ so that the triple  $(\CA, \delta), (\CA', \delta')$ and  $(\CA'', \delta^*)$ with respect to $H$ consists of inductively free members obeying the containment condition for the exponents from Theorem \ref{thm:add-del}, cf.~Definition \ref{def:indfree}. 

If $H = H_0$, then $(\CA', \delta') = (\CA, \delta_{H_0,m_0-1})$, and so $\CA$ is inductively free, by induction on $m_0$.

If $H \ne H_0$, then $|\CA'| <  |\CA|$ and $|\CA''| <  |\CA|$. By Lemma \ref{lem:delta-restriction}, $\delta^*$ is again a concentrated multiplicity on $\CA''$. So it follows from induction on $|\CA|$ that both $\CA'$ and $\CA''$ are inductively free and thanks to 
Theorems \ref{thm:delta-free}, \ref{thm:add-del-simple}, \ref{thm:add-del} and Definitions \ref{def:indfree-simple} and \ref{def:indfree}, $\CA$ is inductively free.
\end{proof}

\begin{remark}
	\label{rem:delta}
	In the reverse implication of Theorem \ref{thm:delta-indfree}, using Theorem \ref{thm:main} and Proposition \ref{prop:delta}, one can show that
	any inductive chain of  $(\CA,\delta)$ descends to  one for $\CA$.
\end{remark}

\appendix
\section*{Appendix: Proof of Lemma \ref{lem:g33g34-2}} 
\renewcommand{\thesection}{A}

(1). Firstly, in each instance in Table \ref{tableB}  when $\CA$ is of rank $4$, we present an induction table for $(\CA'', \kappa)$; see Tables \ref{tableG33A1},
\ref{tableG34A_1^2}, and \ref{tableG34A2} below.

Let $\zeta=e^{\frac{2 \pi i}{3}}$ be a primitive third root of unity. We fix the order of the linear forms and refer to the corresponding hyperplane by its position in the defining polynomial of $\CA^H$ in each case. 

The defining polynomial of $(G_{33},A_1)$ is given by
{\tiny\begin{align*}
	Q(G_{33},A_1)=\ &(x_1 - x_4)
	(x_3 - x_4)
	(x_3 + (-\zeta^2)x_4)
	(2x_1 - x_2 + x_3 + x_4)
	(x_1 - x_3)
	(x_1 -\zeta x_3)
	(x_3 -\zeta x_4)
	(x_1 -\zeta x_4)\\
	&(2x_1 - x_2 + \zeta x_3 + \zeta^2 x_4)
	(x_1 -\zeta^2x_4)
	(x_1 -\zeta^2x_3)
	(-\zeta x_1 - x_2 + \zeta x_3 + x_4)
	(2x_1 - x_2 + \zeta^2x_3 + \zeta x_4)\\
	&(-\zeta x_1 - x_2 + x_3 + \zeta x_4)
	(-\zeta^2 x_1 - x_2 + x_3 + \zeta^2 x_4)
	(-\zeta^2x_1 - x_2 + \zeta^2x_3 + x_4)
	((-\zeta - 2\zeta^2)x_1)\\
	&(-\zeta^2x_1  -\zeta^2x_2 + \zeta^2x_3 + \zeta^2x_4)
	(-\zeta x_1 - x_2 + \zeta^2x_3 + \zeta^2x_4)
	(-\zeta^2x_1 - x_2 + \zeta x_3 + \zeta x_4)\\
	&(-\zeta^2 x_1 -\zeta^2x_2 + x_3 + \zeta x_4)
	(2x_1 -\zeta x_2 + x_3 + \zeta x_4)
	(2x_1 -\zeta^2x_2 + x_3 + \zeta^2 x_4)
	(2x_1 -\zeta x_2 + \zeta^2 x_3 + \zeta^2x_4)\\
	&(-\zeta^2 x_1 -\zeta^2 x_2 + \zeta x_3 + x_4)
	(2x_1 -\zeta x_2 + \zeta x_3 + x_4)
	(2x_1 -\zeta^2x_2 + \zeta^2x_3 + x_4)
	(2x_1 -\zeta^2x_2 + \zeta x_3 + \zeta x_4).
	\end{align*}
}

The Ziegler restriction of $(G_{33},A_1)$ with respect to $H=\ker(x_1-x_2)$ gives  $((G_{33},A_2),\kappa)$ with defining polynomial
{\tiny\begin{align*}
	Q((G_{33},A_2),\kappa)=\ &(x_2 - x_3)^2
	(x_2 -\zeta^2x_3)^2
	(-x_1 + x_2 + 3x_3)
	(x_2  -\zeta x_3)^2
	((-2\zeta - \zeta^2)x_3)^3
	(-x_1 + \zeta x_2 + (-2\zeta - \zeta^2)x_3)^2\\
	&(-x_1 + \zeta^2 x_2 + (-\zeta - 2\zeta^2)x_3)^2
	(-x_1 + x_2)^3
	(-x_1 + \zeta^2x_2 + (-\zeta + \zeta^2)x_3)^2
	(-x_1 + \zeta x_2 + (\zeta - \zeta^2)x_3)^2\\
	&((-\zeta^2)x_1 + x_2 + (\zeta - \zeta^2)x_3)^2
	((-\zeta^2)x_1 + \zeta x_2 + (-\zeta - 2\zeta^2)x_3)^2
	((-\zeta)x_1 + \zeta x_2 + 3x_3)
	((-\zeta^2)x_1 + \zeta^2x_2 + 3x_3).
	\end{align*}}
\bigskip

\begin{table}[ht!b] 
	\renewcommand{\arraystretch}{1}
	\begin{tabular}{lll}
		\hline
		$\exp(\CA',\mu')$ & $\alpha_H$ & $\exp(\CA'',\mu^*)$ \\
		\hline
		\hline
		$\exp((G_{33},A_2),\mathbbm{1})=\{1,6,7\}$ & $\alpha_5$ & $\{6,7\}$ \\
		$\{2,6,7\}$ & $\alpha_5$ & $\{6,7\}$ \\
		$\{3,6,7\}$ & $\alpha_8$ & $\{6,7\}$ \\
		$\{4,6,7\}$ & $\alpha_8$ & $\{6,7\}$ \\
		$\{5,6,7\}$ & $\alpha_4$ & $\{6,7\}$ \\
		$\{6,6,7\}$ &  $\alpha_{11}$ & $\{6,7\}$ \\
		$\{6,7,7\}$ &  $\alpha_7$ & $\{6,7\}$ \\
		$\{6,7,8\}$ &  $\alpha_{12}$ & $\{7,8\}$ \\
		$\{7,7,8\}$ &  $\alpha_{10}$& $\{7,8\}$ \\
		$\{7,8,8\}$ &  $\alpha_9$ & $\{7,8\}$ \\
		$\{7,8,9\}$ &  $\alpha_6$ & $\{7,9\}$ \\
		$\{7,9,9\}$ &  $\alpha_2$ & $\{7,9\}$ \\
		$\{7,9,10\}$ & $\alpha_1$ & $\{7,9\}$ \\
		$\exp((G_{33},A_2),\kappa)=\{7,9,11\}$ \\
		\hline
	\end{tabular}
	\medskip
	
	\caption{Induction table for $\CA=(G_{33},A_1),(\CA^H,\kappa)=((G_{33},A_2),\kappa)$}
	\label{tableG33A1}
\end{table}

\medskip

The defining polynomial of $(G_{34},A_1^2)$ is given by
{\tiny\begin{align*}
	Q(G_{34},A_1^2)=\ &(x_1-x_4)
	((\zeta + 2\zeta^2)x_4)
	(2x_1+\zeta^2 x_2+\zeta x_3-\zeta x_4)
	(x_2-\zeta x_3)
	(x_1-\zeta^2 x_4)
	(x_1-\zeta x_4)
	(2x_1+x_2+x_3-\zeta x_4)\\
	&(-\zeta x_1+\zeta^2 x_2+\zeta x_3+ 2x_4)
	(2x_1+\zeta^2 x_2+\zeta x_3+2\zeta x_4)
	(-\zeta x_1+ \zeta^2 x_2+\zeta x_3-x_4)
	(-\zeta^2 x_1+\zeta^2 x_2+\zeta x_3+ 2\zeta^2 x_4)\\
	&(-\zeta^2 x_1+\zeta^2 x_2+\zeta x_3-\zeta^2 x_4)
	(-\zeta x_1+x_2+x_3+2 x_4)
	(2x_1+x_2+x_3+ 2\zeta x_4)
	(-\zeta x_1+x_2+x_3-x_4)\\
	&(-\zeta^2 x_1+x_2+x3+2\zeta^2 x_4)
	(-\zeta^2 x_1+x_2+x_3-\zeta^2 x_4)
	((-\zeta - 2\zeta^2) x_1)
	(-\zeta^2 x_1+\zeta x_2+x_3-x_4)\\
	&(-\zeta^2 x_1+\zeta^2 x_2+ \zeta^2 x_3-x_4)
	(x_1-\zeta x_2)
	(x_1-\zeta^2 x_3)
	(-\zeta x_1+\zeta^2 x_2+x_3+ 2\zeta^2 x_4)
	(2x_1+x_2+ \zeta^2 x_3-x_4)\\
	&(2x_1+\zeta x_2+x_3+ 2\zeta^2 x_4)
	(2x_1+\zeta x_2+\zeta x_3-x_4)
	(2 x_1+\zeta^2 x_2+\zeta^2 x_3+ 2\zeta^2 x_4)
	(x_2 -\zeta^2 x_4)
	(x_3 -\zeta x_4)\\
	&(2x_1+x_2+ \zeta x_3-\zeta^2 x_4)
	(-\zeta^2 x_1+ \zeta x_2+x_3+2x_4)
	(-\zeta^2 x_1+ \zeta^2 x_2+ \zeta^2 x_3+ 2x_4)
	(-\zeta x_1+ \zeta^2 x_2+x_3 -\zeta^2 x_4)\\
	&(2x_1+x_2+ \zeta^2 x_3+2x_4)
	(2x_1+\zeta x_2+x_3 -\zeta^2 x_4)
	(2x_1+\zeta x_2+\zeta x_3+2x_4)
	(2x_1+\zeta^2 x_2+\zeta^2 x_3 -\zeta^2 x_4)
	(x_2-\zeta x_4)\\
	&(x_3-x_4)
	(2x_1+\zeta^2 x_2+x_3-x_4)
	(-\zeta^2 x_1+ \zeta^2 x_2+x_3-\zeta x_4)
	(-\zeta x_2+x_2+\zeta x_3+ 2\zeta x_4)
	(x_1-\zeta^2 x_2)
	(x_1-x_3)\\
	&(x_2-x_4)
	(x_3-\zeta^2x_4)
	(2x_1+x_2+\zeta x_3+ 2\zeta^2 x_4)
	(2x_1+\zeta^2 x_2+x_3+2x_4)
	(-\zeta^2 x_1+\zeta^2x_2+x_3+ 2\zeta x_4)\\
	&(-\zeta x_1+x_2+\zeta x_3-\zeta x_4)
	(x_1-x_2)
	(x_1-\zeta x_3)
	(2x_1+\zeta x_2+\zeta^2 x_3 -\zeta x_4)
	(2x_1+ \zeta x_2+ \zeta^2 x_3+ 2\zeta x_4)
	(x_2-x_3)\\
	&(x_2 -\zeta^2 x_3).
	\end{align*}}

The Ziegler restriction of  $(G_{34},A_1^2)$ with respect to $H=\ker((\zeta + 2\zeta^2)x_4)$ gives  $((G_{34},A_1 A_2),\kappa)$ with defining polynomial
{\tiny\begin{align*}
	Q((G_{34},A_1A_2),\kappa)=\ & (x_1)^4
	(2x_1 + \zeta^2x_2 + \zeta x_3)^2
	(x_2 - \zeta x_3)
	(2x_1 + x_2 + x_3)^2
	(-\zeta x_1 + \zeta^2x_2 + \zeta x_3)^2
	(-\zeta^2x_1 + \zeta^2x_2 + \zeta x_3)^2\\
	&(-\zeta x_1 + x_2 + x_3)^2
	(-\zeta^2x_1 + x_2 + x_3)^2
	(-\zeta^2x_1 + \zeta x_2 + x_3)^2
	(-\zeta^2x_1 + \zeta^2x_2 + \zeta^2x_3)^2
	(x_1 -\zeta x_2)
	(x_1  -\zeta^2x_3)\\
	&(-\zeta x_1 + \zeta^2x_2 + x_3)^2
	(2x_1 + x_2 + \zeta^2x_3)^2
	(2x_1 + \zeta x_2 + x_3)^2
	(2x_1 + \zeta x_2 +  \zeta x_3)^2
	(2x_1 + \zeta^2x_2 + \zeta^2x_3)^2
	(x_2)^3\\
	&(x_3)^3
	(2x_1 + x_2 + \zeta x_3)^2
	(2x_1 + \zeta^2x_2 + x_3)^2
	(-\zeta^2x_1 + \zeta^2x_2 + x_3)^2
	(-\zeta x_1 + x_2 + \zeta x_3)^2
	(x_1 -\zeta^2 x_2)
	(x_1 - x_3)\\
	&(x_1 - x_2)
	(x_1 -\zeta x_3)
	(2x_1 + \zeta x_2 + \zeta^2x_3)^2
	(x_2 - x_3)
	(x_2 -\zeta^2x_3).
	\end{align*}}

The Ziegler restriction of  $(G_{34},A_1^2)$ with respect to $H=\ker(x_1-x_4)$ gives  $((G_{34},A_3),\kappa)$ with defining polynomial
{\tiny\begin{align*}
	Q((G_{34},A_3),\kappa)=\ &((\zeta + 2\zeta ^2) x_3)^4
	(\zeta ^2 x_1+\zeta  x_2 +(-3\zeta  - 2\zeta ^2)x_3)^2
	(x_1 -\zeta  x_2)
	(x_1+x_2+ (-3\zeta  - 2\zeta ^2)x_3)^2\\
	&(\zeta^2 x_1+\zeta x_2-2\zeta ^2 x_3)^3
	(\zeta ^2x_1+\zeta x_2+\zeta^2 x_3)^4
	(x_1+x_2-2\zeta^2 x_3)^3
	(x_1+x_2+ \zeta^2 x_3)^4
	(-\zeta x_1+x_3)^2
	(-\zeta^2 x_1+x_3)^2\\
	&(\zeta^2 x_1+x_2+ (-\zeta + 2\zeta^2)x_3)^2
	(\zeta x_1+x_2+ (-2\zeta - 3\zeta^2)x_3)^2
	(\zeta^2x_1+ \zeta^2x_2+ (-2\zeta - 3\zeta^2)x_3)^2
	(\zeta^2 x_1+x_2+x_3)^4\\
	&(x_1+ \zeta^2 x_2+ 4 x_3)
	(\zeta x_1+\zeta x_2+4 x_3)
	(x_1-\zeta x_3)^2
	(x_2-x_3)^2
	(x_1-x_3)^2
	(x_2- \zeta^2 x_3)^2
	(x_1+\zeta x_2-2\zeta x_3)^3\\
	&(\zeta^2 x_1+x_2+4x_3)
	(\zeta^2 x_1+x_2+ (2\zeta - \zeta^2)x_3)^2
	(x_1-x_2)
	(x_1-\zeta^2 x_2). 
	\end{align*}}

\begin{table}[ht!b] \tiny
	\renewcommand{\arraystretch}{1}
	\begin{tabular}[t]{lll}
		\hline
		$\exp(\CA',\mu')$ & $\alpha_H$ & $\exp(\CA'',\mu^*)$ \\
		\hline
		\hline
		$\exp((G_{34},A_1 A_2),\mathbbm{1})=\{1,13,16\}$ & $\alpha_{18}$ & $\{13,16\}$ \\
		$\{2,13,16\}$ & $\alpha_{18}$ & $\{13,16\}$ \\
		$\{3,13,16\}$ & $\alpha_{22}$ & $\{13,16\}$ \\
		$\{4,13,16\}$ & $\alpha_9$ & $\{13,16\}$ \\
		$\{5,13,16\}$ & $\alpha_8$ & $\{13,16\}$ \\
		$\{6,13,16\}$ &  $\alpha_7$ & $\{13,16\}$ \\
		$\{7,13,16\}$ & $\alpha_{13}$  & $\{13,16\}$ \\
		$\{8,13,16\}$ & $\alpha_6$  & $\{13,16\}$ \\
		$\{9,13,16\}$ & $\alpha_{10}$  & $\{13,16\}$ \\
		$\{10,13,16\}$ & $\alpha_{23}$  & $\{13,16\}$ \\
		$\{11,13,16\}$ &  $\alpha_5$ & $\{13,16\}$ \\
		$\{12,13,16\}$ & $\alpha_{19}$  & $\{13,16\}$ \\
		$\{13,13,16\}$ &  $\alpha_{19}$ & $\{13,16\}$ \\
		$\{13,14,16\}$ &  $\alpha_1$ & $\{13,16\}$ \\
		$\{13,15,16\}$ &  $\alpha_1$ & $\{13,16\}$ \\
		$\{13,16,16\}$ &  $\alpha_1$ & $\{13,16\}$ \\
		$\{13,16,17\}$ &   $\alpha_{28}$ & $\{13,17\}$ \\
		$\{13,17,17\}$ &   $\alpha_{21}$ & $\{13,17\}$ \\
		$\{13,17,18\}$ &   $\alpha_{20}$ & $\{13,17\}$ \\
		$\{13,17,19\}$ &   $\alpha_{16}$ & $\{13,19\}$ \\
		$\{13,18,19\}$ &   $\alpha_{14}$ & $\{13,19\}$ \\
		$\{13,19,19\}$ &   $\alpha_{17}$ & $\{13,19\}$ \\
		$\{13,19,20\}$ &   $\alpha_{15}$ & $\{13,19\}$ \\
		$\{13,19,21\}$ &   $\alpha_4$ & $\{13,19\}$ \\
		$\{13,19,22\}$ &   $\alpha_2$ & $\{13,19\}$ \\
		$\exp((G_{34},A_1 A_2),\kappa)=\{13,19,23\}$\\
		\hline
	\end{tabular}
	\medskip
	\medskip
	\begin{tabular}[t]{lll}
		\hline
		$\exp(\CA',\mu')$ & $\alpha_H$ & $\exp(\CA'',\mu^*)$ \\
		\hline
		\hline
		$\exp((G_{34},A_3),\mathbbm{1})=\{1,11,13\}$ & $\alpha_{23}$ & $\{11,13\}$ \\
		$\{2,11,13\}$ & $\alpha_{20}$ & $\{11,13\}$ \\
		$\{3,11,13\}$ & $\alpha_{19}$& $\{11,13\}$ \\
		$\{4,11,13\}$ & $\alpha_{18}$& $\{11,13\}$ \\
		$\{5,11,13\}$ &$\alpha_{17}$ & $\{11,13\}$ \\
		$\{6,11,13\}$ & $\alpha_{13}$ & $\{11,13\}$ \\
		$\{7,11,13\}$ & $\alpha_{12}$ & $\{11,13\}$ \\
		$\{8,11,13\}$ & $\alpha_{11}$ & $\{11,13\}$ \\
		$\{9,11,13\}$ & $\alpha_{10}$ & $\{11,13\}$ \\
		$\{10,11,13\}$ & $\alpha_9$ & $\{11,13\}$ \\
		$\{11,11,13\}$ & $\alpha_4$ & $\{11,13\}$ \\
		$\{11,12,13\}$ & $\alpha_2$ & $\{11,13\}$ \\
		$\{11,13,13\}$ & $\alpha_{14}$ & $\{13,13\}$ \\
		$\{12,13,13\}$ & $\alpha_8$ & $\{13,13\}$ \\
		$\{13,13,13\}$ & $\alpha_6$ & $\{13,13\}$ \\
		$\{13,13,14\}$ & $\alpha_1$ & $\{13,13\}$ \\
		$\{13,13,15\}$ & $\alpha_{14}$ & $\{13,13\}$ \\
		$\{13,13,16\}$ & $\alpha_{14}$ & $\{13,16\}$ \\
		$\{13,14,16\}$ & $\alpha_8$ & $\{13,14\}$ \\
		$\{13,14,17\}$ & $\alpha_8$ & $\{13,17\}$ \\
		$\{13,15,17\}$ & $\alpha_6$ & $\{13,15\}$ \\
		$\{13,15,18\}$ & $\alpha_6$ & $\{13,18\}$ \\
		$\{13,16,18\}$ & $\alpha_1$ & $\{13,16\}$ \\
		$\{13,16,19\}$ &$\alpha_1$ & $\{13,19\}$ \\
		$\{13,17,19\}$ & $\alpha_{21}$ & $\{13,19\}$ \\
		$\{13,18,19\}$ & $\alpha_{21}$ & $\{13,19\}$ \\
		$\{13,19,19\}$ & $\alpha_7$ & $\{13,19\}$ \\
		$\{13,19,20\}$ & $\alpha_5$ & $\{13,19\}$ \\
		$\{13,19,21\}$ & $\alpha_7$ & $\{13,19\}$ \\
		$\{13,19,22\}$ & $\alpha_5$ & $\{13,19\}$ \\
		$\exp((G_{34},A_3),\kappa)=\{13,19,23\}$\\
		\hline
	\end{tabular}
	
	\medskip
	
	\medskip
	
	\caption{Induction tables for $\CA=(G_{34},A_1^2),(\CA^H,\kappa)=((G_{34},A_1A_2),\kappa),((G_{34},A_3),\kappa)$}
	\label{tableG34A_1^2}
\end{table}

The defining polynomial of $(G_{34},A_2)$ is given by
{\tiny\begin{align*}
	Q(G_{34},A_2)=\ &(x_3 - x_4)
	(x_3 -\zeta^2x_4)
	(\zeta^2x_1 + \zeta x_2 + x_3 + 3x_4)
	(x_1 -\zeta x_2)
	(x_3 -\zeta x_4)
	((-2\zeta - \zeta^2)x_4)
	(\zeta^2x_1 + \zeta x_2 + \zeta x_3 +(-2\zeta - \zeta^2)x_4)\\
	&(x_1 + x_2 + x_3 + 3x_4)
	(\zeta^2x_1 + \zeta x_2 + \zeta^2 x_3 + (-\zeta - 2\zeta^2)x_4)
	(\zeta^2x_1 + \zeta x_2 + x_3)
	(x_1 + x_2 + \zeta x_3 + (-2\zeta - \zeta^2)x_4)\\
	&(x_1 + x_2+ \zeta^2x_3 + (-\zeta - 2\zeta^2)x_4)
	(x_1 + x_2 + x_3)
	(\zeta^2x_1 + \zeta x_2 + \zeta^2 x_3 + (-\zeta + \zeta^2)x_4)
	(\zeta^2x_1 + \zeta x_2 + \zeta x_3 + (\zeta - \zeta^2)x_4)\\
	&(x_1 + x_2+  \zeta^2x_3 + (-\zeta + \zeta^2)x_4)
	(x_1 + x_2 + \zeta x_3 + (\zeta - \zeta^2)x_4)
	(\zeta x_1 + x_2 + x_3 + (\zeta - \zeta^2)x_4)\\
	&(\zeta^2x_1 + \zeta^2x_2 + x_3 + (\zeta - \zeta^2)x_4)
	(-\zeta x_1 + x_4)
	(-\zeta^2 x_2 + x_4)
	(\zeta^2x_1 + x_2 + x_3 + (-\zeta + \zeta^2)x_4)\\
	&(\zeta x_1 + x_2 + \zeta x_3 + (-\zeta - 2\zeta^2)x_4)
	(\zeta^2x_1 + \zeta^2x_2 + \zeta x_3 +(-\zeta - 2\zeta^2)x_4)
	(\zeta^2x_1 + x_2 + \zeta^2x_3 + (-2\zeta - \zeta^2)x_4)\\
	&(x_1 + \zeta^2x_2 + \zeta x_3 + 3x_4)
	(\zeta x_1 + x_2 + \zeta^2x_3 + 3x_4)
	(\zeta x_1 + \zeta x_2 + \zeta x_3 + 3x_4)
	(\zeta^2x_1 + \zeta^2x_2 + \zeta^2x_3 + 3x_4)
	(x_1  -\zeta^2 x_3)\\
	&(x_2 -\zeta x_3)
	(x_1 - x_3)
	(x_2 -\zeta^2x_3)
	(x_1 + \zeta x_2 + \zeta^2 x_3 + 3x_4)
	(\zeta^2x_1 + x_2 + \zeta x_3)
	(x_1 -\zeta x_4)
	(x_2 - x_4)\\
	&(\zeta^2x_1 + x_2 + x_3 + (-\zeta 2\zeta^2)x_4)
	(x_1 - x_4)
	(x_2 + (-\zeta^2)x_4)
	(x_1 + \zeta x_2 + x_3 + (-2\zeta - \zeta^2)x_4)
	(x_1 -\zeta x_3)
	(x_2 - x_3)\\
	&(\zeta^2 x_1 + x_2 + \zeta x_3 + 3x_4)
	(\zeta^2x_1 + x_2 + \zeta^2x_3 + (\zeta - \zeta^2)x_4)
	(x_1 + \zeta x_2 + \zeta x_3 + (-\zeta + \zeta^2)x_4)
	(\zeta x_1 + \zeta^2x_2 + x_3 + 3x_4)\\
	&(x_1 - x_2)
	(x_1 -\zeta^2x_2).
	\end{align*}}

Now the Ziegler restriction of  $(G_{34},A_2)$ with respect to $H=\ker((-2\zeta - \zeta^2)x_4)$ gives  $((G_{34},G(3,3,3)),\kappa)$ with defining polynomial
{\tiny\begin{align*}
	Q((G_{34},G(3,3,3)),\kappa)=\ & (x_3)^3
	(\zeta^2 x_1 + \zeta x_2 + x_3)^4
	(x_1  -\zeta x_2)
	(\zeta^2 x_1 + \zeta x_2 + \zeta x_3)^3
	(x_1 + x_2 + x_3)^4
	(\zeta^2 x_1 + \zeta x_2 + \zeta^2 x_3)^3
	(x_1 + x_2 + \zeta x_3)^3\\
	&(x_1 + x_2 + \zeta^2 x_3)^3
	(-\zeta x_1)^3
	(-\zeta^2 x_2)^3
	(\zeta^2 x_1 + x_2 + x_3)^3
	(\zeta^2 x_1 + x_2 + \zeta^2 x_3)^3
	(x_1 -\zeta^2 x_3)
	(x_2 -\zeta x_3)
	(x_1 - x_3)\\
	&(x_2 -\zeta^2 x_3)
	(x_1 + \zeta x_2 + \zeta^2x_3)^4
	(x_1 -\zeta x_3)
	(x_2 - x_3)
	(x_1 - x_2)
	(x_1 -\zeta^2 x_2).\end{align*}}

The Ziegler restriction of  $(G_{34},A_2)$ with respect to $H=\ker(x_3 - x_4)$ gives  $((G_{34},A_3),\kappa)$ with defining polynomial
{\tiny\begin{align*}
	Q((G_{34},A_3),\kappa)=\ &((-\zeta - 2\zeta^2)x_3)^3
	(\zeta^2x_1 + \zeta x_2 + 4x_3)
	(x_1 -\zeta x_2)
	(\zeta^2x_1 + \zeta x_2 + x_3)^3
	(x_1 + x_2 + 4x_3)
	(x_1 + x_2 + x_3)^3\\
	&(\zeta^2x_1 + \zeta x_2 + (-\zeta + 2\zeta^2)x_3)^2
	(\zeta^2x_1 + \zeta x_2 + (2\zeta - \zeta^2)x_3)^2
	(x_1 + x_2 + (-\zeta + 2\zeta^2)x_3)^2
	(x_1 + x_2 + (2\zeta - \zeta^2)x_3)^2\\
	&(\zeta x_1 + x_2 + (-2\zeta^2)x_3)^2
	(\zeta^2x_1 + \zeta^2x_2 + (-2\zeta^2)x_3)^2
	(-\zeta x_1 + x_3)^2
	(-\zeta^2x_2 + x_3)^2
	(\zeta^2x_1 + x_2 -2\zeta x_3)^2
	(x_1 - x_3)^2\\
	&(x_2 -\zeta^2x_3)^2
	(x_1 + \zeta x_2 + (-3\zeta - 2\zeta^2)x_3)^2
	(\zeta^2x_1 + x_2 + \zeta x_3)^3
	(x_1 -\zeta x_3)^2
	(x_2 - x_3)^2\\
	&(\zeta^2x_1 + x_2 + (-2\zeta - 3\zeta^2)x_3)^2
	(\zeta x_1 + \zeta^2x_2 + 4x_3)
	(x_1 - x_2)
	(x_1 -\zeta^2x_2).
	 \end{align*}}
\medskip

\begin{table}[ht!b] \tiny
	\renewcommand{\arraystretch}{1}
	\begin{tabular}[t]{lll}
		\hline
		$\exp(\CA',\mu')$ & $\alpha_H$ & $\exp(\CA'',\mu^*)$ \\
		\hline
		\hline
		$\exp((G_{34},A_3),\mathbbm{1})=\{1,11,13\}$ & $\alpha_6$ & $\{11,13\}$ \\
		$\{2,11,13\}$ & $\alpha_6$ & $\{11,13\}$\\
		$\{3,11,13\}$ & $\alpha_{15}$ & $\{11,13\}$ \\
		$\{4,11,13\}$ & $\alpha_{11}$ & $\{11,13\}$ \\
		$\{5,11,13\}$ & $\alpha_{12}$ & $\{11,13\}$ \\
		$\{6,11,13\}$ & $\alpha_4$ & $\{11,13\}$ \\
		$\{7,11,13\}$ & $\alpha_4$ & $\{11,13\}$ \\
		$\{8,11,13\}$ & $\alpha_1$ & $\{11,13\}$ \\
		$\{9,11,13\}$ & $\alpha_1$ & $\{11,13\}$ \\
		$\{10,11,13\}$ & $\alpha_{19}$ & $\{11,13\}$ \\
		$\{11,11,13\}$ & $\alpha_{19}$ & $\{11,13\}$ \\
		$\{11,12,13\}$ &  $\alpha_{22}$& $\{12,13\}$ \\
		$\{12,12,13\}$ &  $\alpha_{21}$& $\{12,13\}$ \\
		$\{12,13,13\}$ &  $\alpha_{20}$& $\{12,13\}$ \\
		$\{12,13,14\}$ &  $\alpha_{18}$& $\{13,14\}$ \\
		$\{13,13,14\}$ &  $\alpha_{17}$& $\{13,14\}$ \\
		$\{13,14,14\}$ &  $\alpha_{16}$& $\{13,14\}$ \\
		$\{13,14,15\}$ &  $\alpha_{14}$& $\{13,15\}$ \\
		$\{13,15,15\}$ &  $\alpha_{13}$& $\{13,15\}$ \\
		$\{13,15,16\}$ &  $\alpha_{10}$& $\{13,16\}$ \\
		$\{13,16,16\}$ &  $\alpha_9$& $\{13,16\}$ \\
		$\{13,16,17\}$ &  $\alpha_8$& $\{13,16\}$ \\
		$\{13,16,18\}$ &  $\alpha_7$& $\{13,16\}$ \\
		$\exp((G_{34},A_3),\kappa)=\{13,16,19\}$\\
		\hline
	\end{tabular}
	\begin{tabular}[t]{ l l l }
		\hline
		$\exp(\CA',\mu')$ & $\alpha_H$ & $\exp(\CA'',\mu^*)$ \\
		\hline
		\hline		
		$\exp((G_{34},G(3,3,3)),\mathbbm{1})=\{1,7,13\}$ & $\alpha_{12}$ & $\{7,13\}$ \\
		$\{2,7,13\}$ & $\alpha_{11}$ & $\{7,13\}$ \\
		$\{3,7,13\}$ & $\alpha_{10}$ & $\{7,13\}$ \\
		$\{4,7,13\}$ & $\alpha_9$ & $\{7,13\}$ \\
		$\{5,7,13\}$ & $\alpha_8$ & $\{7,13\}$ \\
		$\{6,7,13\}$ & $\alpha_7$ & $\{7,13\}$ \\
		$\{7,7,13\}$ & $\alpha_6$ & $\{7,13\}$ \\
		$\{7,8,13\}$ & $\alpha_4$ & $\{7,13\}$ \\
		$\{7,9,13\}$ & $\alpha_1$ & $\{7,13\}$ \\
		$\{7,10,13\}$ & $\alpha_4$ & $\{10,13\}$ \\
		$\{8,10,13\}$ & $\alpha_7$ & $\{10,13\}$ \\
		$\{9,10,13\}$ & $\alpha_8$ & $\{10,13\}$\\
		$\{10,10,13\}$ & $\alpha_9$ & $\{10,13\}$ \\
		$\{10,11,13\}$ &$\alpha_{10}$ & $\{10,13\}$ \\
		$\{10,12,13\}$ & $\alpha_{12}$ & $\{10,13\}$ \\
		$\{10,13,13\}$ & $\alpha_{11}$ & $\{10,13\}$ \\
		$\{10,13,14\}$ & $\alpha_6$ & $\{10,13\}$ \\
		$\{10,13,15\}$ & $\alpha_1$ & $\{10,13\}$ \\
		$\{10,13,16\}$ & $\alpha_{17}$ & $\{13,16\}$ \\
		$\{11,13,16\}$ & $\alpha_{17}$ & $\{13,16\}$ \\
		$\{12,13,16\}$ & $\alpha_{17}$ & $\{13,16\}$ \\
		$\{13,13,16\}$ & $\alpha_5$ & $\{13,16\}$ \\
		$\{13,14,16\}$ & $\alpha_5$ & $\{13,16\}$ \\
		$\{13,15,16\}$ & $\alpha_2$ & $\{13,16\}$ \\
		$\{13,16,16\}$ & $\alpha_2$ & $\{13,16\}$ \\
		$\{13,16,17\}$ & $\alpha_5$ & $\{13,16\}$ \\
		$\{13,16,18\}$ & $\alpha_2$ & $\{13,16\}$ \\
		$\exp((G_{34},G(3,3,3)),\kappa)=\{13,16,19\}$\\
				\hline
	\end{tabular}
	\medskip
		\caption{Induction tables for $\CA=(G_{34},A_2),(\CA^H,\kappa)=((G_{34},A_3),\kappa),((G_{34},G(3,3,3)),\kappa)$}
	\label{tableG34A2}
\end{table}

(2). Secondly, we checked that in each instance in Table \ref{tableB}  when $\CA$ is of rank $5$, none of the $(\CA^H, \kappa)$ is inductively free.  Specifically, we proved a slightly stronger statement, namely that none of the multiarrangements $((G_{34},A_1^2), \kappa)$ and  $((G_{34},A_2), \kappa)$ is additively free, cf.~Definition \ref{def:order}.
This was accomplished by computer calculations carried out in SAGE \cite{sage} as follows: 
Let $\CA$ be $(G_{34},A_1^2)$ or $(G_{34},A_2)$.
If $(\CA,\kappa)$ were additively free with a chain of free multiplicities $(\CA,\mu_i)$ starting at the empty $4$-arrangement, ending at  $(\CA,\kappa)$, then according to Theorem \ref{thm:add-del},  for each Euler restriction with respect to $H_i$ say, in the chain $(\CA,\mu_i)$, we get 
$\exp(\CA^{H_i},\mu_i^*)\subset\exp(\CA,\mu_i)$ at every step.
    In order to show that $(\CA,\kappa)$ does not admit such a chain, we start with the free arrangement $(\CA,\kappa)$ along with its set of exponents and successively eliminate hyperplanes. Starting with the exponents of 
$(\CA,\kappa)$, we assign 
``virtual exponents'' to a deletion along such a chain recursively as follows.
We determine the number of hyperplanes in the corresponding Euler restriction $(\CA^{H_i},\mu_i^*)$ for any $H_i\in\CA$. Theorem \ref{thm:add-del} can only be employed if $\vert\mu_i^*\vert$ equals the sum of $3$ out of $4$ ``virtual exponents'' of  $(\CA,\mu_i)$. So if we do have such an equality, we save the corresponding deletion $(\CA',\mu_i') = (\CA,\mu_{i-1})$ 
in a list, lower the ``virtual exponent'' that was not part of the sum by $1$ and assign this new set of  ``virtual exponents'' in the next round of our algorithm to $(\CA,\mu_{i-1})$. We do stress 
the ``virtual exponents'' that are being determined along the way need not be actual exponents of $(\CA,\mu_{i-1})$, as the latter may not be free. Once every hyperplane $H_i\in\CA_i$ has been tested, we repeat the algorithm with 
each of the deletions $(\CA,\mu_{i-1})$ along with its set of ``virtual exponents''
that was incurred in the previous round.

It turns out that, 
as a result of this algorithm, 
every chain of subarrangements $(\CA,\mu_i)$ admitting a correct number of hyperplanes in each restriction such that Theorem \ref{thm:add-del} could in principal be applied does terminate well before reaching the empty arrangement. 
Consequently, there is no free chain of subarrangements for $(\CA,\kappa)$, so in particular,  
$(\CA,\kappa)$ fails to be 
additively free, so it is not 
inductively free.

%%%%%%%%%%%%%%%%%%%%%%%%%%%%%%%%%%%%%%%%%%%%%%%%%%%%%%%%%%%%%%%%%%%%%%
%%%%%%%%%%%%% Acknowledgments
%%%%%%%%%%%%%%%%%%%%%%%%%%%%%%%%%%%%%%%%%%%%%%%%%%%%%%%%%%%%%%%%%%%%%%
\bigskip {\bf Acknowledgments}:
This work was supported by DFG-Grant
RO 1072/21-1 (DFG Project number 494889912) to G.~R\"ohrle. 
We thank the anonymous referees for several suggestions which have improved the exposition.

%%%%%%%%%%%%%%%%%%%%%%%%%%%%%%%%%%%%%%%%%%%%%%%%%%%%%%%%%%%%%%%%%%%%%%
%%%%%%%%%%%%% bibliography
%%%%%%%%%%%%%%%%%%%%%%%%%%%%%%%%%%%%%%%%%%%%%%%%%%%%%%%%%%%%%%%%%%%%%%

%\bigskip

\bibliographystyle{amsalpha}

\begin{thebibliography}{ATW08}

	\bibitem[ATW08]{abeteraowakefield:euler}
	T.~Abe, H.~Terao, and M.~ Wakefield, 
	\emph{The Euler multiplicity and addition-deletion theorems for multiarrangements}. 
	J. Lond. Math. Soc. (2)  \textbf{77} (2008), no. 2, 335--348.
		
	\bibitem[AHR14]{amendhogeroehrle:indfree}
	N.~Amend, T.~Hoge and G.~R\"ohrle, 
	\emph{On inductively free restrictions of reflection arrangements},
	 J. Algebra \textbf{418} (2014), 197--212.
		
	\bibitem[HR13]{hogeroehrle:free} 
	T.~Hoge and G.~R\"ohrle, 
	\emph{Reflection arrangements are hereditarily free}, T\^ohoku Math.~J.
	~\textbf{65} (2013), no.~3, 313--319.
	
	\bibitem[HR15]{hogeroehrle:indfree} 
	\bysame, %T.~Hoge and G.~R\"ohrle, 
	\emph{On inductively free reflection arrangements}, J.~Reine Angew.~Math.
	\textbf{701} (2015), 205--220. 
		
	\bibitem[HR18]{hogeroehrle:Ziegler} 
	\bysame, %T.~Hoge and G.~R\"ohrle, 
	\emph{Inductive freeness of Ziegler's canonical multiderivations for reflection arrangements}. J. Algebra \textbf{512} (2018), 357--381. 

	\bibitem[HR25]{hogeroehrle:ZieglerII} 
    \bysame, %	T.~Hoge, G.~R\"ohrle, 
	\emph{Inductive Freeness of Ziegler’s Canonical Multiderivations}. Discrete Comput. Geom. (2025). \url{https://doi.org/10.1007/s00454-024-00644-y}
	
	\bibitem[HRS17]{hogeroehrleschauenburg:free} 
	T.~Hoge, G.~R\"ohrle and A.~Schauenburg, 
	\emph{Inductive and Recursive Freeness of Localizations of 
		multiarrangements}, 
	in: Algorithmic and Experimental Methods in Algebra, Geometry, and Number Theory, Springer Verlag
	2017.
		
	\bibitem[OS82]{orliksolomon:unitaryreflectiongroups}
	P.~Orlik and L.~Solomon,
	\emph{Arrangements Defined by Unitary Reflection Groups},
	Math. Ann. \textbf{261}, (1982), 339--357.
		
	\bibitem[OT92]{orlikterao:arrangements} 
	P.~Orlik and H.~Terao,
	\emph{Arrangements of hyperplanes}, Springer-Verlag, 1992.
	
	\bibitem[OT93]{orlikterao:free} 
	\bysame, % P.~Orlik and H.~Terao,
	\emph{Coxeter arrangements are 
		hereditarily free}, T{\^{o}}hoku Math.  J. \textbf{45} (1993), 369--383.
	
	\bibitem[ST54]{shephardtodd}
	G.C. Shephard and J.A. Todd, 
	\emph{Finite unitary reflection groups}.
	Canadian J. Math. \textbf{6}, (1954), 274--304. 

	\bibitem[St15]{sage}
W.~A.~Stein, et al,
\emph{Sage Mathematics Software (Version 7.5.1)}, 
The Sage Development Team, 2015, 
\url{http://www.sagemath.org}.

		
	\bibitem[S64]{steinberg:differential}
	R. Steinberg,
	\emph{Differential equations invariant under finite reflection groups}, 
	Trans. Amer. Math. Soc. \textbf{112}, (1964), 392--400.
	
	\bibitem[T80a]{terao:freeI} 
	H.~Terao, \emph{Arrangements of hyperplanes and
		their freeness I, II}, J. Fac. Sci.  Univ. Tokyo \textbf{27} (1980),
	293--320.
	
	\bibitem[T80b]{terao:freereflections} 
	\bysame, %H.~Terao, 
	\emph{Free arrangements of hyperplanes and unitary reflection groups}. 
	Proc. Japan Acad. Ser. A Math. Sci. \textbf{56} (1980), no.~8, 389--392. 
		
	\bibitem[Z89]{ziegler:multiarrangements}
	G.~Ziegler, 
	\emph{Multiarrangements of hyperplanes and their freeness}. 
	Singularities (Iowa City, IA, 1986), 345--359,
	Contemp. Math., \textbf{90}, Amer. Math. Soc., Providence, RI, 1989. 
\end{thebibliography}

\newcommand{\etalchar}[1]{$^{#1}$}
\providecommand{\bysame}{\leavevmode\hbox to3em{\hrulefill}\thinspace}
\providecommand{\MR}{\relax\ifhmode\unskip\space\fi MR }
% \MRhref is called by the amsart/book/proc definition of \MR.
\providecommand{\MRhref}[2]{%
	\href{http://www.ams.org/mathscinet-getitem?mr=#1}{#2} }
\providecommand{\href}[2]{#2}

%%%%%%%%%%%%%%%%%%%%%%%%%%%%%%%%%%%%%%%%%%%%%%%%%%%%%%%%%%%%%%%%%%%%%%
%%%%%%%%%%%%%%%%%%%%%%%%%%%%%%%%%%%%%%%%%%%%%%%%%%%%%%%%%%%%%%%%%%%%%%

\end{document}